\newif\ifsidenotes
\newcommand{\menote}[1]{\marginpar{\color{blue}\tiny [MLE] #1}}
\newcommand{\meadd}[1]{{\color{blue}{\tiny [MLE]} #1}}
\newcommand{\awnote}[1]{\marginpar{\color{cyan}\tiny [ALW] #1}}
\newcommand{\awadd}[1]{{\color{cyan}{\tiny [ALW]} #1}}
\newcommand{\manote}[1]{\marginpar{\color{magenta}\tiny [MHAG] #1}}
\newcommand{\maadd}[1]{{\color{magenta}{\tiny [MHAG]} #1}}
\newcommand{\menote}[1]{}
\newcommand{\meadd}[1]{}
\newcommand{\awnote}[1]{}
\newcommand{\awadd}[1]{}
\newcommand{\manote}[1]{}
\newcommand{\maadd}[1]{}
\newcommand{\torus}{\mathbb{T}}
\newcommand{\dual}[1]{#1^{\scriptscriptstyle \vee}}
\newcommand{\fform}[2]{\langle\langle #1,#2 \rangle\rangle}
\newcommand{\glue}{\mathcal{G}}
\renewcommand{\subset}{\subseteq}
\newif\ifthesis
	\newcommand{\showthesis}[1]{#1}
	\newcommand{\showthesis}[1]{}
\newif\ifsquarefree
\renewcommand{\Gr}{\operatorname{Gr}_{2,4}}
\newcommand{\Grnd}{\Gr^{\mathrm{nd}}}
\newcommand{\purewedges}{\mathbf{W}}
\newcommand{\purend}{\mathbf{W}^{\mathrm{nd}}}
\newcommand{\pureprim}{\mathbf{W}_{\mathrm{prim}}}
\newcommand{\homog}{\phi}
\newcommand{\varwedge}{W}
\newcommand{\varprim}{\pureprim}
\newcommand{\kleinwedge}{\mathbf{K}'}
\newcommand{\kleinnondeg}{\mathbf{K}'_{\neq 0}}
\newcommand{\kleinmapwedge}{\Phi'}
\newcommand{\packet}{\mathcal{Y}}
\begin{document}

\title{Planes in four space and four associated CM points}

\author{Menny Aka}
\author{Manfred Einsiedler}
\author{Andreas Wieser}
%
\address{Departement Mathematik, ETH Z{\"u}rich, R{\"a}mistrasse 101, 8092 Z{\"u}rich, Switzerland}
\thanks{The authors were supported by the SNF (grants 152819 and 178958).}
%
%
%
\date{\today}
%
%
%
\begin{abstract}
To any two-dimensional rational plane in four-dimensional space one can naturally attach a point in the Grassmannian $\operatorname{Gr}(2,4)$ and four shapes of lattices of rank two.
Here, the first two lattices originate from the plane and its orthogonal complement and the second two essentially arise from the accidental local isomorphism between $\operatorname{SO}(4)$ and $\operatorname{SO}(3)\times \SO(3)$. 
As an application of a recent result of Einsiedler and Lindenstrauss on algebraicity of joinings we prove simultaneous equidistribution of all of these objects under two splitting conditions.
\end{abstract}
\maketitle

\section{Introduction}

For a rational two-dimensional subspace $L$ of $\R^4$ we define the discriminant of~$L$ as the square of the covolume of $L \cap \Z^4$ in $L$, i.e.
\begin{align*}
\disc(L) = \vol (L/L\cap \Z^4)^2 \in \N.
\end{align*}
For any $D \in \N$ we let $\mathcal{R}_D$ be the finite set of rational planes of discriminant $D$, which is a subset of the real Grassmannian $\Gr(\R)$.
The set $\mathcal{R}_D$ is non-empty if and only~if
\begin{align*}
D\in \mathbb{D}:=\setc{D\in \N}{D\not\equiv 0,7,12,15\mod 16}
\end{align*}
(see Corollary~\ref{cor: congruence condition}).
This statement should be seen as an analogue of Legendre's theorem for sums of three squares and relates to works of Mordell \cite{mordell1,mordell2} and Ko \cite{ko} on representations of binary forms as sums of four squares.

We let $\mathbb{H}^2$ denote the hyperbolic plane and call the quotient $Y_0(1) = \lquot{\SL_2(\Z)}{\mathbb{H}^2}$ the modular surface.
Note that
\begin{align*}
\mathcal{X}_2:= \lrquot{\PGL_2(\Z)}{\PGL_2(\R)}{\PO(2)}
\end{align*}
is a two-to-one quotient of the modular surface obtained by using the orientation reversing
reflection $z\in\mathbb{H}^2\mapsto -\overline{z}$ through the imaginary axis.

To each $L\in \mathcal{R}_D$ we will naturally attach a four-tuple $(z_1^L,z_2^L,z_3^L,z_4^L)$ of CM-points on $\mathcal{X}_2$.
We conjecture that the set
\begin{align*}
\mathcal{J}_D = \setc{(L, z_1^L,z_2^L,z_3^L,z_4^L)}{L\in \mathcal{R}_D} 
\end{align*}
is equidistributing to the natural uniform measure on the product space $$\Gr(\R)\times \mathcal{X}^4_2$$ when $D\to\infty$ with $D\in \mathbb{D}$. 
In this paper we prove this conjecture under additional congruence conditions. In particular, our result implies the conjecture on average.

Let us now describe the points $z_i^L$ for $i=1,2,3,4$ in more details. 
First, consider for any rational plane $L$ the two-dimensional lattices
\begin{align*}
L(\Z):=L\cap \Z^4,\quad L^\perp(\Z):=L^{\perp}\cap \Z^4.
\end{align*}
In order to compare these lattices for different planes, we now choose a rotation $k_L\in \SO_4(\R)$ moving $L$ to $\R^2\times \set{(0,0)}\subset \R^4$ and $L^\perp$ to $\set{(0,0)} \times \R^2$.
The shape $[L(\Z)]$ (resp.~$[L^\perp(\Z)]$) is then defined to be the homothety class of the lattice $k_L.L(\Z)\subset \R^2\times \set{(0,0)}$ (resp.~$k_L.L^\perp(\Z)\subset \set{(0,0)} \times \R^2$) and is as such a well-defined element of~$\mathcal{X}_2$ (as it is independent of the choice of rotation $k_L$).
These are the points $z_1^L,z_2^L$ from above. 
Indeed, $[L(\Z)]$ (resp.~$[L^\perp(\Z)]$) may be thought of as the equivalence class of the integral positive definite binary quadratic form which is the restriction of the form $x_0^2+x_1^2+x_2^2+x_3^2$ to $L(\Z)$ (resp.~$L^\perp(\Z)$). 
As such they correspond to CM-points.
Due to the geometric construction we will refer to them as the \textbf{geometric CM points} attached to $L$.

The points $z_3^L$ and $z_4^L$ come from a natural identification of the Grassmannian $\Gr(\R)$ with the space
\begin{align*}
\kleinsp (\R) = \setc{(a_1,a_2)}{a_1,a_2\in \R^3\setminus\set{0},\ \norm{a_1}_2=\norm{a_2}_2}/\sim  
\end{align*}
where $(a_1,a_2)\sim(a_1',a_2')$ if there is $\lambda\in\R^\times$ with $(a_1,a_2)=(\lambda a_1' , \lambda a_2')$. 
Note that $\kleinsp(\R)$ is a two-to-one quotient of a product of two spheres.
 
To describe the above identification, it is useful to view $\R^4$ as the algebra of real Hamiltonian quaternions  
\begin{align*}
\quat(\R)=
\setc{x_0 + x_1 \ii + x_2 \jj + x_3 \kk}{x_0,x_1,x_2,x_3\in \R},
\end{align*}
which is equipped with a conjugation given by
$\overline x= x_0 - x_1 \mathrm{i} - x_2 \mathrm{j} - x_3 \mathrm{k}$ and a trace given by $\Tr(x)=x+\overline x=2x_0$ for $x=x_0 + x_1 \ii + x_2 \jj + x_3 \kk \in \quat(\R)$.
Furthermore, we identify $\R^3$ with the traceless quaternions, i.e.~quaternions of the form $x_1 \mathrm{i} + x_2 \mathrm{j} + x_3 \mathrm{k}$ for $x_1,x_2,x_3 \in \R$. 
Then,
\begin{align}\label{eq:introdef-kleinmap}
L \in \Gr(\R) \mapsto [(a_1(L),a_2(L))] \in \kleinsp(\R)
\end{align}
is a diffeomorphism where $a_1(L),a_2(L)\in \R^3$ for $L=\R v_1\oplus\R v_2$ are given by
 \begin{align}\label{eq:associated intpts}
\begin{array}{l}
a_1(L) := v_1\overline{v_2} - \tfrac{1}{2}\Tr(v_1\overline{v_2}),\\
a_2(L) := \overline{v_2}v_1 - \tfrac{1}{2}\Tr(\overline{v_2}v_1).
\end{array} 
\end{align}
We choose to call the map in \eqref{eq:introdef-kleinmap} the \textbf{Klein map}; the name is inspired by the name 'Klein quadric' for the projective variety obtained from $\Gr(\R)$ by applying the Pl\"ucker embedding.
The points $a_1(L),a_2(L)$ as defined here depend on the choice of basis but the equivalence class $[(a_1(L),a_2(L))]$ does not.
Furthermore, if $L$ is rational and $v_1, v_2$ are a $\Z$-basis of $L(\Z)$ the resulting vectors $a_1(L),a_2(L)$ are integer vectors.
As we will explain later, this yields that the subset $\mathcal{R}_D$ is (almost) in bijection with a set of points $[(a_1,a_2)]$ where $a_1,a_2\in \Z^3$ are vectors of length $\norm{a_1}_2=\norm{a_2}_2=\sqrt D$. 

\begin{remark}
The Klein map arises naturally through the accidental local isomorphism between the groups $\SO_4(\R)$ and $\SU_2(\R)\times \SU_2(\R)$ (and hence also $\SO_3(\R)\times\SO_3(\R)$) as follows.
If $L \in \mathcal{R}_D$ and $\BH_L(\R) < \SU_2^2(\R)$ is the connected component of the stabilizer subgroup of $L$ under the action of $\SU_2^2(\R)$ on $\R^4$ (essentially through the local isomorphism), then the projection of $\BH_L(\R)$ to either of the factors of $\SU_2^2(\R)$ must be a one-dimensional torus. Any such one-dimensional torus is the stabilizer of a point in the adjoint representation of $\SU_2(\R)$ i.e.~the isogeny $\SU_2(\R) \to \SO_3(\R)$.
The two points we obtain in this way are exactly $a_1(L)$ and $a_2(L)$ (up to multiples as described above) -- see Proposition~\ref{prop:splitting map}.
It is also worthwhile noting that the above procedure is how the explicit formulas for the Klein map were found.
\end{remark}

The identification between $\Gr(\R)$ and $\kleinsp(\R)$ in \eqref{eq:introdef-kleinmap} leads to the construction of the following two lattices.
For $i=1,2$ we define $\Lambda_{a_i(L)}=a_i(L)^\perp\cap\Z^3$. Fixing a copy of $\R^2$ in $\R^3$ we can rotate these two-dimensional lattices to it to obtain the shapes $z_3^L:=[\Lambda_{a_1(L)}]$ and $z_4^L:=[\Lambda_{a_2(L)}]$, two  well-defined points on $\mathcal{X}_2$. 
Again, these also correspond to CM-points 
when viewed as the class of the binary form obtained by restriction of the ambient form $x_1^2+x_2^2+x_3^2$. We will refer to $[\Lambda_{a_1(L)}]$ and $[\Lambda_{a_2(L)}]$ as the \textbf{accidental CM points} attached to $L$.

\begin{conjecture}\label{mainconjecture}
 The normalized counting measure on the finite set
\begin{align*}
\mathcal{J}_D = \left\lbrace \big(L,[L(\Z)], [L^\perp(\Z)],[\Lambda_{a_1(L)}], [\Lambda_{a_2(L)}]\big): L \in \mathcal{R}_D \right\rbrace
\subseteq \Gr(\R) \times \mathcal{X}_2^4
\end{align*}
equidistributes to the uniform probability measure on $\Gr(\R) \times \mathcal{X}_2^4$ as $D \to \infty$ with $D\in \mathbb{D}$. That is,
\begin{align*}
\tfrac{1}{|\mathcal{J}_D|} \sum_{x \in \mathcal{J}_D} \delta_x \to 
m_{\Gr(\R) \times \mathcal{X}_2^4}
\end{align*}
in the weak${}^\ast$-topology where $m_{\Gr(\R) \times \mathcal{X}_2^4}$ is the probability measure obtained from an $\SO(4)$-invariant measure on $\Gr(\R)$ and an $\SL_2(\R)$-invariant measure on $\mathbb{H}^2$.
\end{conjecture}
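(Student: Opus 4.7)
The plan is to translate the equidistribution of $\mathcal{J}_D$ into equidistribution of periodic torus orbits on an $S$-arithmetic homogeneous space, and then invoke the Einsiedler--Lindenstrauss classification of joinings of higher-rank diagonalizable actions.

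First, using the accidental local isomorphism $\mathrm{Spin}(4) \cong \mathrm{SU}(2)\times\mathrm{SU}(2)$ underlying the Klein map \eqref{eq:introdef-kleinmap}, I would realize $\Gr(\R)\times\mathcal{X}_2^4$ as a quotient of an adelic homogeneous space $\mathbf{H}(\mathbb{Q})\backslash\mathbf{H}(\mathbb{A})/K'_\infty K_f$, where $\mathbf{H}$ is a suitable $\mathbb{Q}$-group assembled from the two quaternion algebras appearing in the Klein decomposition of $\mathrm{SO}(4)$ together with the $\mathrm{SO}(3)$-factors stabilizing the accidental data. To each $L\in\mathcal{R}_D$ one attaches a $\mathbb{Q}$-torus $\mathbf{T}_L\leq\mathbf{H}$ whose splitting field is an imaginary quadratic extension of $\mathbb{Q}$ determined by $D$, together with a base point $x_L$, so that the packet $\mathcal{J}_D$ is recovered as the projection of the periodic adelic orbit $\mathbf{T}_L(\mathbb{A}).x_L$ (up to finitely many class-number-like choices).

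Each marginal of the normalized counting measure $\mu_D$ on $\mathcal{J}_D$ equidistributes by Duke's theorem: directly for the two modular-surface factors, and via the Klein map from the equidistribution of primitive integer points on spheres of radius $\sqrt{D}$ for the Grassmannian factor. Consequently, any weak-$\ast$ limit $\mu$ of the $\mu_D$ is a joining of the uniform measures on the five factors. The congruence splitting conditions on $D$ are designed so that $\mathbf{T}_L(\mathbb{Q}_p)$ is $\mathbb{Q}_p$-split at one or two auxiliary primes $p$; together with the archimedean component this turns the joint action on the lift into a genuinely higher-rank diagonalizable action, so the Einsiedler--Lindenstrauss joinings theorem applies and forces $\mu$ to be algebraic, namely Haar measure on a closed orbit of some reductive $\mathbb{Q}$-subgroup $\mathbf{M}\leq \mathbf{H}^5$ that contains the diagonally embedded torus and surjects onto each factor.

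The main obstacle, and the technical core of the argument, is to enumerate and rule out all intermediate $\mathbf{M}$. Proper reductive subgroups of $\mathbf{H}^5$ surjecting onto each factor are classified by a Goursat-type lemma: each corresponds to a tuple of $\mathbb{Q}$-rational isomorphisms between quotients of subsets of the five factor groups, which in turn translate into explicit rational relations between the two quaternion algebras encoded by $L$ and between the four CM orders attached to the geometric and accidental data. The splitting hypotheses on $D$ are chosen precisely to preclude these relations at a suitably chosen prime (in the spirit of the shape-of-lattice results of Aka--Einsiedler--Shapira). Once all intermediate joinings are excluded, $\mathbf{M}$ must be the full diagonal copy of $\mathbf{H}$ and $\mu$ is the uniform measure, yielding the conjecture under the congruence assumption.
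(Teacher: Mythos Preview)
The statement you were asked to prove is a \emph{conjecture}, and the paper does not prove it. The paper proves only the conditional version, Theorem~\ref{thm:main}, which assumes in addition that $-D\in(\mathbb{F}_p^\times)^2$ and $-D\in(\mathbb{F}_q^\times)^2$ for two fixed distinct odd primes $p,q$. Your own write-up concedes this at the very end (``yielding the conjecture under the congruence assumption''), so what you have sketched is not a proof of Conjecture~\ref{mainconjecture} but of Theorem~\ref{thm:main}. Removing the congruence conditions is the whole open problem; the splitting hypotheses are not a technical convenience one can later discard but are exactly what produces the higher-rank torus invariance needed to invoke \cite{EL-joining2}. Without them the limit measure is only invariant under a rank-one torus and the joinings classification does not apply.

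As a sketch of the paper's proof of Theorem~\ref{thm:main}, your outline is broadly correct and follows the same route, with a few imprecisions. The ambient group in the paper is $\mathbb{G}=\SU_2^2\times\SL_2^4$ (six simple factors, not five), and the packets live on the $S$-arithmetic quotient with $S=\{\infty,p,q\}$ for two primes, not ``one or two''; both primes are needed to obtain a class-$\mathcal{A}'$ subgroup of rank $4$. There are four modular-surface factors, not two. The exclusion of intermediate joinings in the paper is not done via a general Goursat enumeration but by a direct case analysis on pairs of factors: for each pair $(m,n)$ one exhibits, using the explicit $45^\circ$ relation between the pointwise stabilizers $\mathbb{H}_L^{\mathrm{pt}}$ and $\mathbb{H}_{L^\perp}^{\mathrm{pt}}$ (Lemma~\ref{lemma:pointwisestab}), an element of the invariance group projecting trivially to one factor and nontrivially to the other, contradicting the graph structure. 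This concrete mechanism is the crux of the argument and is more specific than the abstract ``rational relations between quaternion algebras'' you allude to.
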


Our main theorem verifies this conjecture under extra congruence conditions:

\ifsquarefree 

\begin{theorem}[Equidistribution for a given discriminant]\label{thm:main}
Let $p,q$ be any two distinct odd primes. The normalized counting measure on the finite set $\mathcal{J}_D$
equidistributes to the uniform probability measure on $\Gr(\R) \times \mathcal{X}_2^4$ as $D \to \infty$ with $D\in \mathbb{D}$ also satisfying the following conditions: 
\begin{enumerate}[(i)]
\item $-D \in (\Fp^\times)^2,\ -D \in (\BF^\times_q)^2$ 
\item $D$ is square-free.
\end{enumerate}
\end{theorem}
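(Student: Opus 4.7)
The plan is to realize $\mathcal{J}_D$ as a single closed adelic torus orbit on a suitable homogeneous space and then to invoke the joining classification of Einsiedler--Lindenstrauss to show that any weak-$\ast$ subsequential limit of the normalized counting measures is Haar. The starting point is the accidental isomorphism behind the paper: via the quaternion model of $\R^4$, the group $\SO_4$ is, up to covers, identified with $\SU(2)\times\SU(2)$ with the two factors acting by left and right quaternion multiplication, and the Klein map \eqref{eq:introdef-kleinmap} is precisely the orbit map for this action. For each $L\in\mathcal{R}_D$, all five data points $(L,z_1^L,z_2^L,z_3^L,z_4^L)$ are governed by a common imaginary quadratic order inside $\quat(\mathbb{Q})$; the square-free hypothesis (ii) ensures that this order is essentially the maximal order of $\mathbb{Q}(\sqrt{-D})$, so the stabilizer tori in each of the five projections behave uniformly.

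More concretely, I would build an ambient $\mathbb{Q}$-algebraic group $\mathbf{G}$ glued from $\SO_4$ and four copies of $\PGL_2$ via the accidental isomorphism, together with an anisotropic $\mathbb{Q}$-torus $\mathbf{T}\subseteq\mathbf{G}$ isomorphic to the norm-one torus of $\mathbb{Q}(\sqrt{-D})$ whose projection to each factor stabilizes the corresponding data point. The set $\mathcal{J}_D$ then lifts, up to class-group ambiguities controlled by (ii), to a single closed $\mathbf{T}(\mathbb{A})$-orbit $Y_D$ in an adelic quotient $\lquot{\mathbf{G}(\mathbb{Q})}{\mathbf{G}(\mathbb{A})/K_f}$ for a suitable compact open $K_f$, and the pushforward of the Haar measure on $Y_D$ to $\Gr(\R)\times\mathcal{X}_2^4$ is the normalized counting measure on $\mathcal{J}_D$.

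The congruence conditions (i) then guarantee that $\mathbb{Q}(\sqrt{-D})$ splits at $p$ and at $q$, so $\mathbf{T}(\mathbb{Q}_p)$ and $\mathbf{T}(\mathbb{Q}_q)$ are both split tori and act non-compactly on each factor of $\mathbf{G}$. This provides the positive-entropy input required by the Einsiedler--Lindenstrauss joinings theorem, which then implies that every weak-$\ast$ subsequential limit $\mu$ of the counting measures on $Y_D$ is algebraic: it is a convex combination of Haar measures on closed orbits of a $\mathbb{Q}$-algebraic subgroup $\mathbf{H}\supseteq\mathbf{T}$ of $\mathbf{G}$. The final step is to rule out every proper intermediate $\mathbf{H}$, forcing $\mathbf{H}=\mathbf{G}$ and hence $\mu$ to be the full Haar measure, whose image in $\Gr(\R)\times\mathcal{X}_2^4$ is the desired uniform measure.

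The main obstacle will be precisely this last step: enumerating all intermediate $\mathbf{H}$'s and ruling each one out. Among the five factors, the pairs $(z_1^L,z_2^L)$ and $(z_3^L,z_4^L)$ are already related through the Klein map and genus-theoretic identities, so one must carefully distinguish the expected algebraic relations (which are built into the choice of $\mathbf{G}$ and therefore compatible with $\mathbf{H}=\mathbf{G}$) from any additional algebraic coincidence among the factors. The presence of two split primes is essential here: a nontrivial intermediate $\mathbf{H}$ would impose on $\mathbf{T}$ a compatibility condition detectable modulo $p$ or modulo $q$, and the independent splitting of $-D$ at two distinct primes, combined with the square-free assumption, rules such $\mathbf{H}$ out. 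Once $\mu$ is identified as Haar, equidistribution on each marginal follows by projection, consistent with Duke's theorem on the four CM-point factors.
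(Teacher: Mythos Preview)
Your overall architecture---adelic toral packets plus the Einsiedler--Lindenstrauss joinings classification---matches the paper's, but two points are mis-specified and they are exactly where the work lies.

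First, the acting torus is not a single norm-one torus of $\mathbb{Q}(\sqrt{-D})$; it is a \emph{product of two} such copies. Via the Klein map the stabilizer of $L$ in $\SU_2^2$ splits as $\mathbb{H}_{a_1(L)}\times\mathbb{H}_{a_2(L)}$ (Proposition~\ref{prop:splitting map}), and this rank-two structure is what makes the exclusion of intermediate subgroups possible. The four isogenies $\Psi_{1,L},\dots,\Psi_{4,L}$ onto the four $\SL_2$-factors have four pairwise distinct one-dimensional kernels inside this two-dimensional torus---the pointwise stabilizers $\mathbb{H}_L^{\mathrm{pt}}$, $\mathbb{H}_{L^\perp}^{\mathrm{pt}}$, and the two coordinate axes, which sit at $45^\circ$ to one another by Lemma~\ref{lemma:pointwisestab}. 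That is how Case~3 of the proof of Theorem~\ref{thm:orbit version} rules out graph subgroups: for each pair of $\SL_2$-factors one exhibits a torus element trivial in one projection but not the other. With a rank-one torus all four projections would be isogenies with common kernel $\{\pm1\}$ and no such separation exists; you could not exclude a diagonal $\SL_2$ sitting across two factors. The two split primes are not what excludes these subgroups---that argument takes place at a single place $p$---rather, they are needed because the projection of the rank-two torus to each individual $\SL_2$-factor has rank only one, so one needs both $p$ and $q$ to assemble a class-$\mathcal{A}'$ subgroup $\mathbf{A}\cong\mathbb{Z}^4$ acting with rank $\geq 2$ on every factor (Lemma~\ref{lemma:inv.of the limit}).

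Second, the joinings theorem takes as \emph{input} that the limit measure has Haar marginals on each factor; this is established separately (Proposition~\ref{prop:individualequidistr}) via Duke's theorem, respectively Linnik's method under the splitting hypothesis, and is not a consequence of positive entropy or of the split-torus action alone. Your final sentence has the logic reversed: equidistribution on each marginal is a hypothesis fed into the joinings machinery, not a corollary of its output. (A smaller point: $\mathcal{J}_D$ is in general a disjoint union of several packets, not a single adelic orbit, and one argues packet by packet---see the end of Section~\ref{section:prooffromdynstatement}.)
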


\else 

\begin{theorem}[Equidistribution for a given discriminant]\label{thm:main}
Let $p,q$ be any two distinct odd primes. 
The normalized counting measure on the finite set $\mathcal{J}_D$
equidistributes to the uniform probability measure on $\Gr(\R) \times \mathcal{X}_2^4$ as $D\in\BD$ goes to infinity while $D$ satisfies the additional condition that $-D$ is a non-zero square modulo $p$ and modulo $q$.
\end{theorem}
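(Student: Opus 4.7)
The plan is to realise the tuple $(L,[L(\Z)],[L^\perp(\Z)],[\Lambda_{a_1(L)}],[\Lambda_{a_2(L)}])$ attached to $L\in\mathcal{R}_D$ as the projection of a single orbit of an abelian group acting diagonally on a product of five homogeneous spaces, and then to conclude equidistribution using the Einsiedler--Lindenstrauss joinings classification, with the two splitting hypotheses at $p$ and $q$ providing the positive entropy needed to apply it.

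First I would identify the arithmetic structure tying the five factors together. The Klein map $L\mapsto [(a_1(L),a_2(L))]$ of \eqref{eq:introdef-kleinmap} identifies $\Gr(\R)$ with a double quotient of $\SO_3\times\SO_3$, and each $\mathcal{X}_2$-factor is a quotient of $\PGL_2(\R)$. All four CM points $z_i^L$ correspond to ideal classes in orders in an imaginary quadratic field attached to $-D$: the geometric ones $[L(\Z)]$ and $[L^\perp(\Z)]$ arise from restricting $x_0^2+x_1^2+x_2^2+x_3^2$ to the respective lattices, while the accidental ones $[\Lambda_{a_i(L)}]$ arise from restricting $x_1^2+x_2^2+x_3^2$ to $a_i(L)^\perp\cap\Z^3$; all these are integral binary forms of discriminant a fixed multiple of $-D$. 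Consequently a class group associated to $-4D$ acts transitively (up to a bounded ambiguity) on $\mathcal{R}_D$ and lifts diagonally to the five-tuples. After passing to an $S$-arithmetic homogeneous space with $S=\{\infty,p,q\}$, the set $\mathcal{J}_D$ becomes a single closed orbit of an adelic torus $\mathbf{T}(\mathbb{A}_S)$ inside an $S$-arithmetic quotient of a product group $\mathbf{G}(\mathbb{A}_S)$ whose archimedean factor projects onto $\Gr(\R)\times\mathcal{X}_2^4$.

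Next I would establish equidistribution of each individual projection of the normalised counting measure on $\mathcal{J}_D$. For the Grassmannian factor this reduces via the Klein map to equidistribution of integer points on the level sets of a ternary quadratic form, classical after work of Duke, Iwaniec and Linnik. For each of the four CM-point factors it is Duke's theorem, or its $S$-arithmetic refinement obtained by Linnik's method. The splitting conditions $-D\in(\Fp^\times)^2$ and $-D\in(\BF_q^\times)^2$ guarantee that $\mathbf{T}$ splits at both $p$ and $q$, so the resulting $p$- and $q$-adic torus elements act on any weak-$*$ limit with positive metric entropy; this positive entropy is precisely the input required for the joinings theorem.

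The last and most delicate step is the joinings argument. Any weak-$*$ accumulation point $\mu$ of the counting measures on $\mathcal{J}_D$ is a $\mathbf{T}(\mathbb{A}_S)$-invariant joining of the five uniform probability measures on the factors. Because the class-group action has positive entropy at both $p$ and $q$, the Einsiedler--Lindenstrauss classification applies and forces $\mu$ to be supported on a single periodic orbit of an algebraic subgroup $\mathbf{H}\subseteq\mathbf{G}$ that surjects onto each of the five factors. The main obstacle is the subsequent classification of such subgroups: the geometric construction carries built-in symmetries (the Klein map relates the Grassmannian to the two accidental factors, the orthogonal complement relates $[L(\Z)]$ to $[L^\perp(\Z)]$), so one must exclude subgroups $\mathbf{H}$ arising from non-trivial isogeny-type correspondences between pairs of factors. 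Here the presence of \emph{two} distinct splitting primes is essential: any non-diagonal $\mathbf{H}$ would present itself as the graph of such a correspondence simultaneously at $p$ and at $q$, and requiring compatibility at two independent primes forces the correspondence to be trivial, so that $\mathbf{H}=\mathbf{G}$. Once this is established, $\mu$ is the Haar measure on $\mathbf{G}$, and projecting to $\Gr(\R)\times\mathcal{X}_2^4$ yields the theorem.
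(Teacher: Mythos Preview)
Your overall architecture matches the paper's: pass to an $S$-arithmetic quotient with $S=\{\infty,p,q\}$, realise $\mathcal{J}_D$ as the projection of a toral packet, get equidistribution on each factor from Duke/Linnik, and upgrade to joint equidistribution via the Einsiedler--Lindenstrauss joinings theorem. However, two points in your final paragraph are off, and the second one is where the genuine content lies.

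First, the role of the two primes is not what you say. The algebraic subgroup $\mathbf{H}$ produced by the joinings theorem is defined over $\Q$, so its $p$- and $q$-adic points are automatically ``compatible''; there is no extra constraint from working at two primes simultaneously. The actual reason you need two split primes is that the torus $\mathbb{H}_L=\mathbb{H}_{a_1(L)}\times\mathbb{H}_{a_2(L)}$ has rank two, but each of the four isogenies $\Psi_{i,L}$ to the $\SL_2$-factors has a one-dimensional kernel (the pointwise stabiliser of $L$, of $L^\perp$, or one of the coordinate factors). Hence at a single prime $p$ the acting group projects to a rank-\emph{one} subgroup in each $\SL_2(\Q_p)$-factor, which is not enough for the class-$\mathcal{A}'$ hypothesis of \cite{EL-joining2}. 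With two primes you get a $\Z^2$ in each factor, and the joinings theorem applies.

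Second, your sketch of how to rule out non-trivial $\mathbf{H}$ omits the key structural input. Once $\mathbf{H}$ is the graph of a $\Q$-isomorphism between two factors, the paper exhibits in each case an explicit element of the torus of the form $(\mathrm{Id},a)$ with $a\neq\mathrm{Id}$, contradicting the graph property. For the pairs involving two $\SL_2$-factors this relies on the $45^\circ$-twist of Lemma~\ref{lemma:pointwisestab}: the pointwise stabilisers $\mathbb{H}_L^{\mathrm{pt}}$ and $\mathbb{H}_{L^\perp}^{\mathrm{pt}}$ are the diagonal and anti-diagonal in $\mathbb{H}_{a_1(L)}\times\mathbb{H}_{a_2(L)}$, so an element $h\in\mathbb{H}_L^{\mathrm{pt}}(\Q_p)$ satisfies $\Psi_{1,L}(h)=\mathrm{Id}$ while $\Psi_{2,L}(h)$, $\Psi_{3,L}(h)$, $\Psi_{4,L}(h)$ are all non-trivial. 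This is the device that actually kills the unwanted joinings, and it is not captured by a vague appeal to ``compatibility at two independent primes''.
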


\fi

First results in the spirit of this theorem have previously been obtained by Maass \cite{Maass56}, \cite{Maass59} and Schmidt \cite{Schmidt-shapes}, who establish the averaged equidistribution of the pairs $(L,[L(\Z)])$, where $L$ varies over the rational planes of discriminant up to $D$. 
Recently, Horesh and Karasik \cite{HoreshKarasik} have obtained equidistribution of the tuples $(L,[L(\Z)],[L^\perp(\Z)])$ in this averaged setup.
All of these results are polynomially effective in $D$. In an upcoming preprint, the authors prove together with Luethi and Michel \cite{EffDisjoint} effective equidistribution of the tuples $(L,[L(\Z)],[L^\perp(\Z)])$ for $L \in \mathcal{R}_D$ using a different method.

\ifsquarefree\else Here, we obtain as a corollary of the above theorem the following ineffective strengthening of these results:

\begin{corollary}[Averaged equidistribution]\label{cor:averaged}
The normalized counting measure on the finite set
\begin{align*}
\left\lbrace \big(L,[L(\Z)], [L^\perp(\Z)],[\Lambda_{a_1(L)}], [\Lambda_{a_2(L)}]\big): L \in \mathcal{R}_d \text{ for some } d \leq D \right\rbrace
\end{align*}
equidistributes to the uniform probability measure on $\Gr(\R) \times \mathcal{X}_2^4$ as $D \to \infty$.
\end{corollary}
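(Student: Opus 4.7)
The plan is to deduce Corollary~\ref{cor:averaged} from Theorem~\ref{thm:main} by a weighted averaging argument combined with a residue-class density estimate. Write $\mu_d$ for the normalized counting measure on $\mathcal{J}_d$ and express the measure in the corollary as
\[
\nu_{\le D}=\frac{1}{N(D)}\sum_{\substack{d\in\mathbb{D}\\d\le D}}|\mathcal{R}_d|\,\mu_d,\qquad N(D):=\sum_{\substack{d\in\mathbb{D}\\d\le D}}|\mathcal{R}_d|.
\]
For a test function $f\in C_c(\Gr(\R)\times\mathcal{X}_2^4)$ and $\epsilon>0$, the task reduces to showing $|\nu_{\le D}(f)-m(f)|\to 0$, where $m$ denotes the uniform probability measure on $\Gr(\R)\times\mathcal{X}_2^4$.

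Next, I would introduce enough auxiliary primes to cover almost every admissible discriminant by the hypothesis of Theorem~\ref{thm:main}. Fix distinct odd primes $p_1,\dots,p_k$ and set
\[
E_k=\bigcup_{1\le i<j\le k}\bigl\{d\in\mathbb{D}:-d\in(\mathbb{F}_{p_i}^\times)^2\text{ and }-d\in(\mathbb{F}_{p_j}^\times)^2\bigr\}.
\]
Since the Legendre symbols $d\mapsto\left(\frac{-d}{p_i}\right)$ are jointly equidistributed on $\{\pm1\}^k$ as $d$ varies over residues modulo $p_1\cdots p_k$, and the condition $d\in\mathbb{D}$ only constrains $d$ modulo~$16$, the complement $\mathbb{D}\setminus E_k$ has density $(k+1)/2^k$ inside $\mathbb{D}$. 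For each $d\in E_k$, one of the $\binom{k}{2}$ pairs $(p_i,p_j)$ triggers Theorem~\ref{thm:main}, so one extracts a common threshold $D_0=D_0(k,\epsilon,f)$ with $|\mu_d(f)-m(f)|<\epsilon$ for every $d\in E_k$ with $d\ge D_0$.

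Finally, I would split $|\nu_{\le D}(f)-m(f)|$ according to three regimes: $d\le D_0$, $d>D_0$ with $d\in E_k$, and $d>D_0$ with $d\notin E_k$. The first contribution is $O_{D_0,f}(N(D)^{-1})$ and vanishes as $D\to\infty$, since $N(D)\to\infty$ from the trivial bound $|\mathcal{R}_d|\ge 1$ for $d\in\mathbb{D}$. The second is at most $\epsilon$. The third is bounded by $\|f\|_\infty\cdot N(D)^{-1}\sum_{d\le D,\,d\notin E_k}|\mathcal{R}_d|$, and here lies the main obstacle: one must show that the class-number-like quantity $|\mathcal{R}_d|$ does not concentrate on the residue classes comprising $\mathbb{D}\setminus E_k$. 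I expect this to follow from the Minkowski--Siegel expression of $|\mathcal{R}_d|$ as a product of local densities, each bounded uniformly above and below as the residue of $d$ modulo $p_1\cdots p_k$ varies, combined with the classical asymptotic $\sum_{d\le D}|\mathcal{R}_d|\asymp D^{3/2}$. This would give a third contribution of order $(k+1)/2^k$, so that sending first $D\to\infty$ and then $k\to\infty$ yields $\limsup_{D\to\infty}|\nu_{\le D}(f)-m(f)|\le\epsilon$, proving the corollary.
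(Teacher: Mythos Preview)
Your three-regime split and the device of letting the number of auxiliary primes $k\to\infty$ after $D\to\infty$ is exactly the skeleton of the paper's argument. The substantive gap is in the third regime. You want $\sum_{d\le D,\,d\notin E_k}|\mathcal{R}_d|\ll (k+1)2^{-k}\,N(D)$, and you propose to deduce this from the Minkowski--Siegel product expression for $|\mathcal{R}_d|$ by bounding the local factors at $p_1,\dots,p_k$. That does not suffice: $|\mathcal{R}_d|$ is essentially $r_3(d)^2\asymp d\,L(1,\chi_{-d})^2$, and the factors at all \emph{other} primes fluctuate by $d^{o(1)}$ in a way that is not determined by $d\bmod p_1\cdots p_k$. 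Boundedness of finitely many local densities cannot rule out concentration of the weighted mass on the bad residue classes; what you actually need is an asymptotic for $\sum_{d\le D,\,d\equiv a\,(N)}|\mathcal{R}_d|$ in each class with comparable leading constants, and your sketch does not supply this. (A minor slip: $N(D)\asymp D^2$, not $D^{3/2}$, since primitive wedges lie on a five-dimensional cone in $\R^6$ and you are counting them in a ball of radius $\sqrt{D}$.)

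The paper closes this gap by a different route that sidesteps the discriminant altogether: it counts planes directly by the residue class of the primitive wedge $v_1\wedge v_2\in\bigwedge^2\Z^4$ modulo $N=p_1\cdots p_m$. Since $\disc(L)=\|v_1\wedge v_2\|^2$ is polynomial in the wedge coordinates, the splitting condition $-\disc(L)\in(\F_p^\times)^2$ becomes a condition on the wedge modulo $p$. The homogeneous counting results of Duke--Rudnick--Sarnak and Eskin--McMullen then show that each primitive residue class $\mathbf a\in W_{\mathrm{prim}}(\Z/N\Z)$ carries an asymptotically equal share of $|\mathcal{R}_{<D}|$, and an elementary count over $\F_p$ shows that the proportion of bad classes is $\ll m(2/3)^m$. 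This replaces your hoped-for Siegel input by a single lattice-point count on a homogeneous variety, which automatically performs the averaging over $d$ that your argument leaves open.
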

\fi

First non-averaged results as in Theorem \ref{thm:main} have been established by Linnik \cite{linnik} and Skubenko \cite{skubenko} (with a congruence condition at one prime) and Duke \cite{duke88} (building on work of Iwaniec \cite{iwaniecforduke} and without any congruence condition).
Duke's theorem shows equidistribution of integer points on two-dimensional spheres and of CM-points on $\mathcal{X}_2$.

We use these results to obtain the equidistribution on the individual factors of our space.
Using a theorem \cite[Thm.~1.4]{EL-joining2} of the second named author with Lindenstrauss we then upgrade this information to joint equidistribution.
This method of proof has already been used in the work \cite{AES3D} of the first and the second named author with Shapira.
Note that the proof of Theorem~\ref{thm:main} (much like the main result in \cite{AES3D}) can currently not be adapted to provide an error rate as in \cite{Maass56,Maass59,Schmidt-count,Schmidt-shapes,HoreshKarasik,EffDisjoint}.

\begin{remark}
Our techniques also yield a version of Theorem~\ref{thm:main} for \emph{oriented} subspaces, see Theorem~\ref{thm:oriented} for the statement.
The authors find Theorem~\ref{thm:main} to be the geometrically more appealing version of these theorems which is why it is presented as the main result, though Theorems \ref{thm:main} and \ref{thm:oriented} are (certainly from the dynamical viewpoint) morally equivalent. Given an oriented rational subspace, one can associated to it shapes in the modular curve $Y_0(1)$ which are (as opposed to the above points) truly complex multiplication points.
\end{remark}

\begin{remark}
Our main motivation for the study of the sets $\mathcal{J}_D$ has been geometric.
As it turned out, the construction of the CM-points $z_1^L, z_2^L, z_3^L, z_4^L$ has a rather surprising relation to the group law of the Picard group of an order in $\Q(\sqrt{-D})$. 
In this way our equidistribution result also has an arithmetic
counterpart, see Theorem \ref{thm:arithmeticversion}. To describe a slightly simpler but analogous result, write $\mathbf{z}_{[\Fa]} \in Y_0(1)$ for the CM point associated to a proper ideal class $\Fa$ of a quadratic order $\mathcal{O}$. Then sets of tuples
\begin{align*}
\big(\mathbf{z}_{[\Fa]},\mathbf{z}_{[\Fb]},\mathbf{z}_{[\Fa\Fb]}, \mathbf{z}_{[\Fa\Fb^{-1}]}, \mathbf{z}_{[\Fa^2]}, \mathbf{z}_{[\Fb^2]}\big)
\end{align*}
where $[\Fa],[\Fb]$ vary over proper ideal classes of $\mathcal{O}$ are equidistributed in $Y_0(1)^6$ when the discriminant of $\mathcal{O}$ goes to infinity (given two splitting conditions).
There are no clear dependencies between the coordinates of these tuples (in the ambient space 
$Y_0(1)$) which is a good reason to suspect this equidistribution statement. The same cannot be said about Theorem~\ref{thm:main} a priori. 
\end{remark}

\begin{remark}[Glue groups]
We would also like to remark that the proof of our theorem can be used to strengthen our result. In fact, Theorem \ref{thm:main} can be formulated to consider only planes $L\in \mathcal{R}_D$, whose glue group  is of a fixed isomorphism type (see Theorem~\ref{thm:equi along isotypes}). 
Here, the glue group of a lattice $L$ is an invariant which captures additional information on the lattice including the discriminant (see e.g.~McMullen \cite{McMullenglue} or Section \ref{section:glue groups} for definitions).
\end{remark}

\begin{remark}[Extensions]\label{rem:extensions}
Theorem~\ref{thm:main} can be generalized to arbitrary quaternion algebras (or rather norm forms on quaternion algebras).
In \cite{2in4General}, the first and last named authors together with Horace Chaix generalize the above results to arbitrary quadratic forms using the language of Clifford algebras.

Higher-dimensional analogues of the above results have been covered in joint work of first and last author with Andrea Musso \cite{Grassmannianhigherdim}. 
The result there does not involve any accidental shapes, but is stronger in a sense comparable to \cite{AEShigherdim}, \cite{KhayutinGrids}.
\end{remark}

\subsection{Outline of the paper}

This paper is organized as follows:
\begin{itemize}
\item In Section \ref{section:Klein map} we study properties of the Klein map (including the statements made above). This yields important information about the related stabilizer groups which play a crucial role in our dynamical argument.
\item In Section \ref{section:CMpts} we discuss in more detail the four CM points attachted to each plane.
\item In Section \ref{section:formulation} we define the joint acting group for the dynamical setup and formulate a dynamical version (Theorem \ref{thm:orbit version}) of Theorem \ref{thm:main}.
\item In Section \ref{section:prooffromdynstatement} we use the orbit of the stabilizer in order to generate additional points starting from one point (Propositions \ref{prop:generating intpts} and \ref{prop:collection intpts}) and apply this to prove Theorem \ref{thm:main} assuming Theorem \ref{thm:orbit version}.
\item In Section \ref{section:proofdynresult} we use the fact that any limit measure coming from Theorem~\ref{thm:orbit version} is a joining for a higher rank torus action and the algebraicity of such joinings \cite{EL-joining2} to deduce the theorem.
\item In Sections \ref{section:furthercomments} and \ref{section:glue groups} we explain further connections to existing work and in particular the above mentioned connection to the Picard group and to glue groups.
\item
In Appendix \ref{section:proof of averaged version} we prove the averaged version (Corollary~\ref{cor:averaged}) of the main theorem.
In Appendix \ref{section:appendixA} we study the Klein map in the case of the split quaternion algebra $\Mat_2$.
\end{itemize}

\textbf{Acknowledgements:}
We would like to thank Elon Lindenstrauss, Ilya Khayutin and Philippe Michel for discussions on this paper.
We are also very grateful to Curtis McMullen for suggesting a refinement of our theorem with respect to glue groups.
Last but not least, we would like to thank the referees for many useful comments, for their very careful reading of this article, and for making us realize that we need to emphasize the naturality of the Klein map more.

\section{The Klein map}\label{section:Klein map}

In the following discussions we let $\BK$ denote a field of characteristic zero.

\subsection{Hamiltonian quaternions}\label{section:quaternions}

As in the introduction, we denote by $\quat$ the variety defined over $\Q$ representing the $\Q$-algebra of Hamiltonian quaternions.
We let $\overline{\cdot}: \quat \to \quat$ be the canonical involution (henceforth also called conjugation), $\Tr: \quat \to \G_a$ the (reduced) trace, and $\Nr: \quat \to \G_a$ the (reduced) norm form.
For any $x \in \quat(\Q)$, $\Tr(x) = x + \overline{x}$ and
 \begin{align*}
 \Nr(x) = x \overline{x} = \overline{x}x = x_0^2+x_1^2+x_2^2+x_3^2 =: Q(x_0,x_1,x_2,x_3)
 \end{align*}
writing $x = x_0 + x_1 \ii + x_2 \jj + x_3 \kk$. As already mentioned we will identify $\quat$ with the four-dimensional affine space and in particular write $\Nr = Q$.
A quaternion with zero trace is said to be pure and will often be viewed as a point in three-dimensional space. 
The variety representing pure quaternions will be denoted by $\quat_0$.

We equip $\quat$ with the integral structure arising from the choice of basis $1,\ii,\jj,\kk$: we set 
\begin{align*}
\quat(\Z) = \Z + \Z\ii + \Z \jj + \Z\kk
\end{align*}
and define $\quat(\Z_p) = \quat(\Z) \otimes \Z_p$ for any prime $p$.
Under the above identification $\quat(\Q) \simeq \Q^4$ we have $\quat(\Z) \simeq \Z^4$.
Note that $\quat(\Z)$ is a non-maximal order in $\quat(\Q)$; it is contained with index two in the order of Hurwitz quaternions which is generated by $\ii$, $\jj$, $\kk$, and $\frac{1+\ii+\jj+\kk}{2}$.
Furthermore, we let $\quat_0(\Z) = \quat_0(\Q) \cap \quat(\Z) = \Z \ii + \Z\jj + \Z\kk$ and similarly for $\quat_0(\Z_p)$.

Denote by
\begin{align*}
\SU_2 = \mathrm{ker}(\Nr: \quat^\times \to \G_m)
\end{align*}
the algebraic $\Q$-group of norm one elements of $\quat$ and observe that the action of $\SU_2^2$ on $\quat$ given by $(\alpha,\beta).x = \alpha x \beta^{-1}$ preserves the norm.
This yields a $\Q$-isogeny $P:\SU_2^2 \to \SO_4$ (and in particular a local isomorphism of the groups $\SO_4(\R)$ and $\SU_2^2(\R)$), where the kernel is given by $\set{(1,1),(-1,-1)}$.
In what follows, we shall always mean by $\SO_n$ the special orthogonal group for the sum of $n$ squares. 

The action of $\SU_2$ on pure quaternions by conjugation (i.e.~by $\alpha.x = \alpha x \alpha^{-1}$) gives an isogeny $\SU_2 \to \SO_3$ with kernel $\set{1,-1}$.
Identifiying $\quat_0$ with the Lie algebra of $\SU_2$, this is the adjoint representation.
We define $P_1,P_2:\SU_2^2 \to \SO_3$ to be the composition of this isogeny with the respective coordinate projections $\SU_2^2\to\SU_2$.

\begin{remark}
There are various integral structures on $\SU_2^2(\Q)$ which would be naturally in the context of this article. Set for the purposes of this remark
\begin{align*}
\Gamma_1 = \quat(\Z)^\times \times \quat(\Z)^\times,\
\Gamma_2 = P^{-1}(\SO_4(\Z)),\
\Gamma_3 = P_1^{-1}(\SO_3(\Z)) \cap P_2^{-1}(\SO_3(\Z)).
\end{align*}
These finite groups satisfy $\Gamma_1 \subset \Gamma_2 \subset \Gamma_3$ as well as
\begin{align*}
\Gamma_2 = \{\gamma = (\gamma_1,\gamma_2) \in \Gamma_3:  \gamma_1\gamma_2^{-1} \in \quat(\Z)^\times\}.
\end{align*}
The group $\Gamma_3$ is the product of the group of units in the (maximal) order of Hurwitz quaternions with itself. In particular, $[\Gamma_3:\Gamma_2] = [\Gamma_2: \Gamma_1] = 3$.
\end{remark}

We write $\Gr$ for the projective Grassmannian variety of two-dimensional subspaces in four-space. 
In particular, the set $\Gr(\BK)$ can be identified with the set of two-dimensional $\BK$-subspaces in $\BK^4$ (i.e.~planes containing zero).
Given a subspace $L \in \Gr(\BK)$ and a $\BK$-algebra $A$ we will write $L(A) = L\otimes_\BK A$.

Under the identification of $\quat$ with the four-dimensional affine space, the action of $\SU_2^2$ on $\quat$ induces naturally an action on $\Gr$.
The action is not transitive but there is an open and dense orbit, namely the Zariski open set of non-degenerate subspaces $\Grnd$.
Here, a subspace $L$ is non-degenerate if $L \cap L^\perp = \{0\}$.
As $\Nr$ is positive definite on $\quat(\R)$, all rational subspaces are contained in $\Grnd(\Q)$.

For any $L \in \Grnd(\BK)$ we set $\BH_L$ to be the Zariski connected component of the stabilizer subgroup of $L$. 
In particular, for any $\BK$-algebra $A$
\begin{align*}
\BH_L(A) \subset \{g \in \SU_2^2(A): g.L(A) = L(A)\}.
\end{align*}
The restriction of $\BH_L$ to $L$ resp.~$L^\perp$ yields an isogeny $\BH_L \to \SO_{\Nr|_L} \times \SO_{\Nr|_{L^\perp}}$ as $\BH_L$ is connected. 
In particular, $\BH_L$ is a two-dimensional algebraic torus defined over $\BK$.
Furthermore, for any $v\in \BK^3 = \quat_0(\BK)$ we define the one-dimensional $\BK$-torus $\BH_v< \SU_2$ to be the stabilizer subgroup of $v$.
Thus, for any $\BK$-algebra $A$
\begin{align*}
\BH_v(A) = \{g \in \SU_2(A): g.v = v \}.
\end{align*}

\subsection{Definition of the Klein map}

We define 
\begin{align*}
\kleinsp (\BK) =  \setc{(a_1,a_2)}{a_1,a_2\in \BK^3\setminus\set{0},\ Q(a_1) = Q(a_2)}/\sim
\end{align*}
where $(a_1,a_2)\sim(a_1',a_2')$ if there exists $\lambda \in\BK^\times$ with $(a_1,a_2)=(\lambda a_1',\lambda a_2')$. 
Also, observe that $\SU_2^2(\BK)$ acts on $\kleinsp (\BK)$ via $(g_1,g_2).[a_1,a_2] = [g_1.a_1,g_2.a_2]$ where $[a_1,a_2]$ denotes the equivalence class of $(a_1,a_2)$ in $\kleinsp(\BK)$.
Note that $\kleinsp(\BK)$ is the set of $\BK$-points of a quasi-projective variety $\kleinsp$ defined over $\Q$.
In the following we will use the definition of $a_1(\cdot)$ and $a_2(\cdot)$ from Equation \eqref{eq:associated intpts}.

\begin{proposition}[Klein map]\label{prop:splitting map}
The map
\begin{align*}
\Phi: L \in \Grnd(\BK) \mapsto [a_1(L),a_2(L)] \in \kleinsp (\BK)
\end{align*}
is a well-defined bijection and is equivariant for the actions of $\SU_2^2(\BK)$. The inverse of $\Phi$ is given by\footnote{
The fact that \eqref{eq:formulainverse} defines a subspace is not trivial and uses the crucial information that $a_1(L),a_2(L)$ are pure and of the same length. We note that the formula has appeared in a slightly different context in \cite{Linnikthmexpander} in order to realize the action of the class group on integer points on the $2$-dimensional sphere.
}
\begin{align}\label{eq:formulainverse}
\Phi^{-1}([a_1,a_2]) = \setc{x\in \quat(\BK)}{a_1 x=x a_2}
\end{align}
for all $[a_1,a_2]\in \kleinsp (\BK)$. Furthermore, we have the following properties:
\begin{itemize}
\item $\mathbb{H}_L = \mathbb{H}_{a_1(L)} \times \mathbb{H}_{a_2(L)}$ for any $L \in \Grnd(\BK)$.
\item $\Phi(L^\perp) = [a_1(L),-a_2(L)]$ for any $L \in \Grnd(\BK)$ where $L^\perp$ denotes the orthogonal complement with respect to $Q$.
\end{itemize}
\end{proposition}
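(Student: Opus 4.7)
The plan is to establish the assertions of Proposition~\ref{prop:splitting map} in the order: well-definedness and $\SU_2^2$-equivariance of $\Phi$, then bijectivity via the inverse formula~\eqref{eq:formulainverse}, and finally the stabilizer and orthogonal-complement identities.

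Well-definedness rests on three observations. First, $a_i(L)$ is pure by construction, the scalar correction $\tfrac12 \Tr(v_1\bar{v_2})$ cancelling the trace. Second, under a change of basis $(v_1',v_2') = (v_1,v_2)g$ with $g \in \GL_2(\BK)$, the diagonal contributions in $v_1'\overline{v_2'}$ are scalar and drop out of the pure part, while the cross terms combine via $\overline{v_1\bar{v_2}} = v_2\bar{v_1}$ to pick up an overall factor of $\det(g)$; the same factor applies to $a_2$, so $[a_1(L),a_2(L)] \in \kleinsp(\BK)$ depends only on $L$. Third, the identities $\Nr(v_1\bar{v_2}) = \Nr(v_1)\Nr(v_2) = \Nr(\bar{v_2}v_1)$ together with $\Tr(v_1\bar{v_2}) = \Tr(\bar{v_2}v_1)$ give $\Nr(a_1(L)) = \Nr(a_2(L))$, a common value equal to the Gram determinant of $(v_1,v_2)$ and hence nonzero. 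Equivariance then follows from $\bar g = g^{-1}$ for $g \in \SU_2$: expanding $(g_1 v_1 g_2^{-1})\overline{(g_1 v_2 g_2^{-1})} = g_1(v_1\bar{v_2})g_1^{-1}$ and taking pure parts yields $a_1((g_1,g_2).L) = g_1 a_1(L) g_1^{-1}$, with the symmetric identity for $a_2$.

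The heart of the argument is the inverse formula. The inclusion $L \subseteq \{x : a_1(L)x = x a_2(L)\}$ is checked by substituting $v_1$ and $v_2$: the scalar corrections cancel, and the remaining identities reduce to tautologies or to $\bar{v_2}v_2 = \Nr(v_2)$. For the reverse inclusion I would analyze the commuting $\BK$-linear operators $\lambda(x) = a_1 x$ and $\rho(x) = x a_2$ on $\quat(\BK)$. Pureness of the $a_i$ gives $a_i^2 = -N$ with $N = \Nr(a_1) = \Nr(a_2)$, so $\lambda^2 = \rho^2 = -N$; the product $T := \lambda\rho$ thus satisfies $T^2 = N^2$ and is diagonalizable over $\BK$ with $\pm N$-eigenspaces $V_\pm$ and $\quat(\BK) = V_+ \oplus V_-$. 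The manipulation $a_1 x = \pm x a_2 \Longleftrightarrow T(x) = \mp N x$ (using invertibility of $\lambda$) identifies $\{x : a_1 x = x a_2\} = V_-$ and $\{x : a_1 x = -x a_2\} = V_+$. Injectivity of $\Phi$ is then immediate from the formula; for surjectivity, given $[a_1,a_2] \in \kleinsp(\BK)$ one sets $L := V_-$ and verifies $\Phi(L) = [a_1,a_2]$ via a centralizer computation showing that the pure part of $v_1\bar{v_2}$ commutes with $a_1$ and is therefore a $\BK$-multiple of $a_1$ (the centralizer of a pure quaternion in $\quat_0(\BK)$ being one-dimensional).

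For the two remaining properties, equivariance gives $(g_1,g_2).L = L$ iff $(g_1.a_1(L), g_2.a_2(L)) = \lambda(a_1(L),a_2(L))$ for some $\lambda \in \BK^\times$; taking norms forces $\lambda = \pm 1$, and passing to the Zariski-connected component $\mathbb{H}_L$ kills the sign, whence $\mathbb{H}_L = \mathbb{H}_{a_1(L)} \times \mathbb{H}_{a_2(L)}$. For the orthogonal complement, I would observe that for any pure quaternion $a$ both $x \mapsto ax$ and $x \mapsto xa$ are skew with respect to the bilinear form $B(x,y) = \tfrac12 \Tr(x\bar y)$ polarizing $Q$, a one-line calculation using $\bar a = -a$ and cyclicity of $\Tr$. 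Consequently $T = \lambda\rho$ is $B$-symmetric, its eigenspaces $V_\pm$ are $B$-orthogonal, and complementarity gives $V_+ = L^\perp$; the sign-flip equivalence from the previous paragraph then yields $\Phi(L^\perp) = [a_1(L),-a_2(L)]$. The main obstacle is pinning down $\dim V_\pm = 2$ without circular reasoning, since the inverse formula and the $L^\perp$-identity each seem to presuppose the other. The clean route is to run the skew/symmetric argument at the outset so that the $B$-orthogonal decomposition $\quat(\BK) = V_+ \oplus V_-$ is available from the start; the two independent vectors $v_1,v_2 \in V_-$ (any basis of $L$), together with two independent vectors of $V_+$, then force both pieces to be two-dimensional.
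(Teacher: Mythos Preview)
Your approach is genuinely different from the paper's. The paper proves bijectivity by showing that both $\Phi$ and the candidate inverse $\Psi$ are $\SU_2^2$-equivariant, then invokes transitivity of $\SU_2^2(\overline{\BK})$ to reduce $\Psi\circ\Phi=\id$ and $\Phi\circ\Psi=\id$ to a single check at $L=\langle 1,\ii\rangle$. The $L^\perp$ formula is obtained afterwards and indirectly: since $\BH_L=\BH_{L^\perp}$, the first bullet forces $\BH_{a_i(L)}=\BH_{a_i(L^\perp)}$, hence the lines through $a_i(L)$ and $a_i(L^\perp)$ coincide, and norm-matching together with injectivity of $\Phi$ pins down the sign. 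Your eigenspace analysis of $T(x)=a_1xa_2$ is more explicit and has the pleasant feature that $L^\perp=V_+$ drops out immediately from the $B$-symmetry of $T$, without passing through stabilizers.

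There is, however, one gap you flag but do not actually close: the ``two independent vectors of $V_+$'' are never exhibited. Orthogonality $V_+\perp V_-$ together with $L\subseteq V_-$ yields only $\dim V_+\leq 2$, which is the wrong inequality, so the circularity you worry about remains. A self-contained fix in your own spirit: pick any pure $w\in\quat_0(\BK)$ with $w\perp a_1$ and $\Nr(w)\neq 0$ (such $w$ exists since $\Nr$ cannot vanish identically on the $2$-dimensional space $a_1^\perp\cap\quat_0$). Then $wa_1=-a_1w$, and for $x\in V_-$ one computes $a_1(wx)=-w(a_1x)=-(wx)a_2$, so $wV_-\subseteq V_+$; invertibility of $w$ gives $\dim V_+\geq\dim V_-$. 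The symmetric trick with a pure invertible $w'\perp a_2$ acting by right multiplication gives the reverse inequality, hence $\dim V_\pm=2$ with no reference to $L$, which settles both directions at once. The sign ambiguity in your surjectivity step (your centralizer computation only shows $\Phi(V_-)=[a_1,\pm a_2]$) is then resolved by applying the already-established identity $\Psi\circ\Phi=\id$ to $L=V_-$: the wrong sign would give $V_-=\Psi([a_1,-a_2])=V_+$, contradicting $V_+\cap V_-=0$. Both your argument and the paper's tacitly require $N\neq 0$; this is automatic whenever $\quat(\BK)$ is a division algebra, which covers the cases the paper actually uses.
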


\begin{proof}
To see that $\Phi$ is well-defined, observe first that for any $L\in \Grnd(\BK)$ with basis $v_1,v_2$ and for $a_1(L),a_2(L)$ defined as in \eqref{eq:associated intpts} using $v_1,v_2$, we have
\begin{align*}
Q(a_1(L)) 
= a_1(L) \overline{a_1(L)} 
&= (v_1\overline{v_2} - \tfrac{1}{2}\Tr(v_1\overline{v_2}))
(v_2\overline{v_1} - \tfrac{1}{2}\Tr(v_1\overline{v_2}))\\ 
&= Q(v_1)Q(v_2) + \tfrac{1}{4}\Tr(v_1\overline{v_2})^2 - \tfrac{1}{2}\Tr(v_1\overline{v_2}) (v_2\overline{v_1}+v_1 \overline{v_2})\\
&= Q(v_1)Q(v_2) - \tfrac{1}{4}\Tr(v_1\overline{v_2})^2
\end{align*}
and analogously
\begin{align}\label{eq:same length}
Q(a_2(L)) = Q(v_1)Q(v_2) - \tfrac{1}{4}\Tr(\overline{v_1}v_2)^2 = Q(a_1(L)). 
\end{align}
Furthermore, as the maps 
\begin{align}\label{eq:bilin&antisymm}
\begin{array}{l}
(u,v) \mapsto u\overline{v}-\tfrac{1}{2}\Tr(u\overline{v}), \\
(u,v) \mapsto \overline{v}u-\tfrac{1}{2}\Tr(\overline{v}u)
\end{array}
\end{align}
(that are used to define $a_1(L),a_2(L)$ in \eqref{eq:associated intpts})
are bilinear and antisymmetric, it follows that $[a_1(L),a_2(L)]$ does not depend on the choice of the basis $v_1,v_2$ of $L$.

To verify the equivariance property, let $(\alpha,\beta) \in \SU_2^2(\BK)$ and $L\in \Grnd(\BK)$ with basis $v_1,v_2$. 
Then $(\alpha,\beta).L(\BK) = \BK \alpha v_1\beta^{-1} \oplus \BK \alpha v_2\beta^{-1}$ and applying \eqref{eq:associated intpts} for this basis yields
\begin{align*}
a_1((\alpha,\beta).L) 
&= (\alpha v_1 \beta^{-1}) \overline{\alpha v_2\beta^{-1}} -\tfrac{1}{2} \Tr((\alpha v_1 \beta^{-1}) \overline{\alpha v_2\beta^{-1}}) \\
&= \alpha v_1\overline{v_2} \alpha^{-1} - \tfrac{1}{2} \Tr(v_1\overline{v_2})
= \alpha a_1(L) \alpha^{-1}.
\end{align*}
Similarly, $a_2((\alpha,\beta).L) = \beta a_2(L) \beta^{-1}$ and therefore $\Phi$ is equivariant.

To see that the map $\Psi$ defined in \eqref{eq:formulainverse} is equal to the inverse of $\Phi$ we first show that $\Psi$ is equivariant too. Indeed, using the substitution $\hat{x} = \alpha x \beta^{-1}$ we get
\begin{align*}
(\alpha,\beta).\Psi([a_1,a_2]) 
&= (\alpha,\beta).\setc{x\in \quat(\BK)}{a_1 x=x a_2} 
=\setc{\alpha x\beta^{-1}\in \quat(\BK)}{a_1 x=x a_2} \\
&= \setc{\hat{x}\in \quat(\BK)}{\alpha a_1\alpha^{-1} \hat{x}=\hat{x} \beta a_2\beta^{-1}} = \Psi([\alpha a_1\alpha^{-1} ,\beta a_2\beta^{-1}]).
\end{align*}
Let $\overline{\BK}$ denote an algebraic closure. As $\SU_2^2(\overline{\BK})$ acts transitively on $\Grnd(\overline{\BK})$ and $\kleinsp (\overline{\BK})$ and both $\Phi$ and $\Psi$ are equivariant, it suffices to verify $\Psi \circ \Phi = \id$ and $\Phi \circ \Psi = \id$ at one point.
Direct computations show that $\setc{\lambda \in \quat(\overline{\BK})}{\ii \lambda = \lambda \ii } = \langle 1,\ii\rangle_{\overline{\BK}}$ as well as $[a_1(L),a_2(L)] = [\ii,\ii]$ for $L = \langle 1,\ii \rangle_{\overline{\BK}}$
and we obtain that $\Phi$ is a bijection with inverse $\Psi$ for $\overline{\BK}$ and also for $\BK$.

The formula for the stabilizers follows from equivariance as for any $(\alpha,\beta) \in \SU_2^2(\overline{\BK})$
\begin{align*}
(\alpha,\beta). L = L &\iff (\alpha,\beta).[a_1(L),a_2(L)] = [a_1(L),a_2(L)]\\
&\iff
[\alpha a_1(L) \alpha^{-1},\beta a_2(L) \beta^{-1}] = [a_1(L),a_2(L)].
\end{align*} 
By orthogonality this shows that $(\alpha,\beta). L = L$ if and only if there is $\lambda \in \set{\pm 1}$ with $\alpha a_1(L) \alpha^{-1} = \lambda a_1(L)$ and $\beta a_2(L) \beta^{-1} = \lambda a_2(L)$. 
As $\BH_L$ is defined to be the connected component of the stabilizer, this shows the desired equality.

For the second property (which could also be verified using equivariance one more time) observe that $\BH_L = \BH_{L^\perp}$ 
and thus $\BH_{a_i(L)} = \BH_{a_i(L^\perp)}$ for $i=1,2$.
Since the stabilizer of a line within $\SO_3$ determines the line,
we must therefore have $a_1(L^\perp) = \lambda a_1(L)$ and $a_2(L^\perp) = \mu a_2(L)$ for some $\lambda,\mu\in \BK$.
Since $a_1(L^\perp)$ and $a_2(L^\perp)$ have the same value for $Q$, we have $\mu = \pm \lambda$. This shows
\begin{align*}
\Phi(L^\perp) = [a_1(L^\perp),a_2(L^\perp)]= [\lambda a_1(L),\pm \lambda a_2(L)] = [a_1(L),\pm a_2(L)]
\end{align*}
so that $\Phi(L^\perp) = [a_1(L),-a_2(L)]$ as $\Phi(L)\neq \Phi(L^\perp)$.
\end{proof}

\subsection{Associated integer points}\label{section:associated intpts}

Given any rational plane $L\in \Gr(\Q)$ the points $a_1(L),a_2(L)$ defined using a $\Z$-basis $v_1,v_2$ of the lattice $L(\Z)$ are in fact also integral by \eqref{eq:associated intpts}. 
Furthermore, the bilinearity and antisymmetry in \eqref{eq:bilin&antisymm} show that they are well-defined up to changing signs simultaneously. 
If not stated otherwise, we will construct $a_1(L),a_2(L)$ for rational planes $L$ in this fashion and will refer to these points as the \textbf{integer points associated} to $L$. 

Recall that the discriminant $\disc(L)$ of $L(\Z)$ for a rational plane $L$ was defined as the square of the covolume of $L(\Z)\subset L$. 
Alternatively, the discriminant of $L$ may be defined as the discriminant of the restriction of the quadratic form $Q$ to $L(\Z)$.
Recall that the discriminant of a quadratic form with $\Z$-coefficients is given by the determinant of any matrix representation of the form.

\begin{lemma}[Equality of discriminants]\label{lemma:formula lengths}
For any $L \in \Gr(\Q)$ we have
\begin{align*}
Q(a_1(L)) = Q(a_2(L)) = \disc(L).
\end{align*}
\end{lemma}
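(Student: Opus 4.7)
The plan is to simply combine the computation already carried out in the proof of Proposition~\ref{prop:splitting map} with the standard interpretation of $\disc(L)$ as the determinant of the Gram matrix of $L(\Z)$.

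First I would pick a $\Z$-basis $v_1,v_2$ of $L(\Z)$ and use it to define $a_1(L), a_2(L)$ via \eqref{eq:associated intpts}. From the computation inside the proof of Proposition~\ref{prop:splitting map} we already have the identity
\begin{align*}
Q(a_1(L)) = Q(a_2(L)) = Q(v_1)Q(v_2) - \tfrac{1}{4}\Tr(v_1\overline{v_2})^2,
\end{align*}
so it remains to show that the right-hand side equals $\disc(L)$.

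Next I would recall that under the identification of $\R^4$ with $\quat(\R)$, the standard inner product corresponding to $Q$ can be written in terms of the quaternion trace as $\langle u, v\rangle = \tfrac{1}{2}\Tr(u\overline{v})$; in particular $\langle v_i, v_i\rangle = Q(v_i)$. By the standard formula for the covolume of a sublattice in terms of the Gram matrix of a basis,
\begin{align*}
\disc(L) = \det \begin{pmatrix} \langle v_1,v_1\rangle & \langle v_1, v_2\rangle \\ \langle v_2,v_1\rangle & \langle v_2,v_2\rangle \end{pmatrix} = Q(v_1)Q(v_2) - \tfrac{1}{4}\Tr(v_1\overline{v_2})^2,
\end{align*}
and the claim follows.

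There is no real obstacle here; the only thing worth double-checking is the sign/factor conventions in the trace-pairing identity $\langle u, v\rangle = \tfrac{1}{2}\Tr(u\overline{v})$ and the consistency with the definition of $a_1(L)$ via $v_1\overline{v_2} - \tfrac{1}{2}\Tr(v_1\overline{v_2})$, but both are straightforward from $x\overline{x} = Q(x)$.
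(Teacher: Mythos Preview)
Your proof is correct and essentially identical to the paper's: both invoke the identity $Q(a_1(L))=Q(a_2(L))=Q(v_1)Q(v_2)-\tfrac14\Tr(v_1\overline{v_2})^2$ from the proof of Proposition~\ref{prop:splitting map} and then identify this quantity with $\disc(L)$. The only cosmetic difference is that the paper phrases the latter step via the discriminant of the binary form $q(x,y)=Q(v_1)x^2+\Tr(v_1\overline{v_2})xy+Q(v_2)y^2$ rather than the Gram matrix determinant, which is of course the same computation.
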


\begin{proof}
Representing $Q|_{L(\Z)}$ in a basis $v_1,v_2$ of $L(\Z)$ as
\begin{align*}
q(x,y) = Q(xv_1+yv_2) = Q(v_1)x^2 + \Tr(v_1\overline{v_2})xy +  Q(v_2)y^2
\end{align*}
we obtain $\disc(L) = \disc(q) = Q(v_1)Q(v_2) - \tfrac{1}{4}\Tr(v_1\overline{v_2})^2$.
The lemma thus follows from Equation~\eqref{eq:same length}.
\end{proof}

Notice that if $D \in \N$ is square-free and $L \in \mathcal{R}_D$, the associated integer points are in fact both primitive, since by Lemma \ref{lemma:formula lengths} any integer $m$ with $\tfrac{1}{m}a_1(L) \in \Z^3$ would satisfy $m^2\mid D$ and similarly for $a_2(L)$.

For non-square-free $D$ we have the following notion, which serves as a replacement of this observation.
We say that a pair of vectors $(w_1,w_2)\in \Z^3 \times \Z^3$ is \emph{pair-primitive} if $\tfrac{1}{p}w_1 \not \in \Z^3$ or $\tfrac{1}{p}w_2 \not \in \Z^3$ for all odd primes $p$ and if $\tfrac{1}{4}(w_1+w_2)\not \in \Z^3$ or $\tfrac{1}{4}(w_1-w_2)\not \in \Z^3$. 

In fact we will see examples below where the integer points
associated to a plane behave differently with respect to the prime $2$, but not too differently as
pair-primitivity implies that one of the vector $\frac14w_1$ and $\frac14w_2$ is not integral.

\begin{lemma}[Pair primitivity]\label{lemma:pairwise primitivity}
The integer points $(a_1(L),a_2(L))$ associated to a rational plane $L\in \Gr(\Q)$ are pair-primitive. Furthermore, they satisfy $a_1(L) \equiv a_2(L) \mod 2$.
\end{lemma}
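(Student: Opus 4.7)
The plan is to compute $a_1(L)$ and $a_2(L)$ explicitly in terms of the scalar/pure decomposition of the basis $v_1,v_2$ and then run a case analysis on the reductions of the pure parts modulo $p$. Write $v_1=\alpha_1+\beta_1$ and $v_2=\alpha_2+\gamma_2$ with $\alpha_i\in\Z$ (scalar parts) and $\beta_1,\gamma_2\in\Z^3$ (pure parts). Using $\beta_1\gamma_2 = -\langle \beta_1,\gamma_2\rangle+\beta_1\times \gamma_2$ and the analogous identity for $\gamma_2\beta_1$, a direct expansion of $v_1\overline{v_2}$ and $\overline{v_2}v_1$ gives the formulas
\begin{align*}
a_1(L)+a_2(L)=2(\alpha_2\beta_1-\alpha_1\gamma_2),\qquad a_1(L)-a_2(L)=-2\,\beta_1\times\gamma_2.
\end{align*}
The second formula immediately yields $a_1(L)\equiv a_2(L)\pmod{2}$, settling the final assertion.

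For pair-primitivity at an odd prime $p$ the plan is to suppose $a_1(L),a_2(L)\in p\Z^3$ and derive a contradiction from the $\Z$-basis property $L(\Z)=\Z v_1\oplus \Z v_2$. The displayed formulas then give $\beta_1\times\gamma_2\equiv 0$ and $\alpha_2\beta_1\equiv\alpha_1\gamma_2\pmod p$. The reductions $\overline{\beta_1},\overline{\gamma_2}\in\F_p^3$ are therefore $\F_p$-linearly dependent, and I would split into two cases. If, say, $\overline{\beta_1}=0$ and $\alpha_1\not\equiv 0\pmod p$, then the second congruence forces $\gamma_2\equiv 0\pmod p$; the combination $\alpha_1 v_2-\alpha_2 v_1=\alpha_1\gamma_2-\alpha_2\beta_1$ then lies in $p\Z^4\cap L=pL(\Z)$, contradicting that $v_1,v_2$ are a $\Z$-basis. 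If instead $\overline{\beta_1},\overline{\gamma_2}$ are both nonzero, they are proportional, $\overline{\gamma_2}=c\overline{\beta_1}$, and the congruence forces $\alpha_2\equiv c\alpha_1\pmod p$, whence $v_2\equiv c v_1\pmod p$, giving the same kind of contradiction. The sub-case $\beta_1,\alpha_1\equiv 0$ (or the symmetric one in $v_2$) is even easier because then $v_1/p\in L(\Z)$ directly.

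For the prime $2$ the plan is identical in spirit: assume both $(a_1+a_2)/4$ and $(a_1-a_2)/4$ lie in $\Z^3$ and derive a contradiction. The formulas give $\alpha_2\beta_1-\alpha_1\gamma_2\in 2\Z^3$ and $\beta_1\times\gamma_2\in 2\Z^3$. Over $\F_2$ the linear dependence of $\overline{\beta_1}$ and $\overline{\gamma_2}$ means that either one of them vanishes or $\overline{\beta_1}=\overline{\gamma_2}\neq 0$. In the first case I would run essentially the argument of the previous paragraph (taking parity of $\alpha_i$ into account: if some $\alpha_i$ is even then $v_i/2\in L(\Z)$, while if both are odd then $\alpha_1 v_2-\alpha_2 v_1\in 2L(\Z)$). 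In the second case the congruence forces $\alpha_1\equiv\alpha_2\pmod 2$, hence $v_1-v_2\in 2\Z^4\cap L=2L(\Z)$, which again contradicts $\{v_1,v_2\}$ being a $\Z$-basis.

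The main obstacle I anticipate is handling the prime $2$ cleanly: unlike the odd case, divisibility of $a_1,a_2$ by $2$ does \emph{not} lead to a contradiction (as the example $L=\langle 1+\ii,\jj+\kk\rangle$ illustrates), and one really must use the stronger condition in terms of $a_1\pm a_2$. Keeping track of how the parities of $\alpha_1,\alpha_2$ interact with the reductions of $\beta_1,\gamma_2$ modulo $2$ is the delicate point, but the scalar/pure decomposition above makes the bookkeeping manageable.
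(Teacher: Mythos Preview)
Your argument is correct. The paper takes a more packaged route via the Pl\"ucker embedding: one checks directly that the six coordinates of the primitive vector $v_1\wedge v_2\in\bigwedge^2\Z^4\cong\Z^6$ are, up to sign, precisely the entries of $\tfrac12(a_1+a_2)$ and $\tfrac12(a_1-a_2)$. Then $a_1\equiv a_2\pmod 2$ is immediate, and any failure of pair-primitivity contradicts primitivity of $v_1\wedge v_2$ in one line, with no case analysis at all. Your scalar/pure formulas $a_1+a_2=2(\alpha_2\beta_1-\alpha_1\gamma_2)$ and $a_1-a_2=-2\,\beta_1\times\gamma_2$ are in fact computing these same Pl\"ucker coordinates (the first gives the three minors involving the scalar coordinate, the second the remaining three), and your case analysis over $\F_p$ amounts to re-proving by hand that primitivity of $v_1\wedge v_2$ is equivalent to $\overline{v_1},\overline{v_2}$ being linearly independent mod~$p$. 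So the two proofs rest on the same identity; the paper's version is shorter and conceptually cleaner, while yours is more elementary and self-contained. One small wrinkle to tighten: in your mod-$2$ Case~A the parenthetical ``if some $\alpha_i$ is even then $v_i/2\in L(\Z)$'' only follows once the corresponding pure part is also known to vanish mod~$2$; make that explicit rather than absorbing it into ``essentially the previous argument''.
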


\begin{proof}
The antisymmetry of the bilinear maps in \eqref{eq:bilin&antisymm} shows that $\Phi$ factors through the Pl\"ucker embedding
\begin{align*}
L(\BK) = \BK v_1\oplus\BK v_2 \in \Gr(\BK) \mapsto [v_1\wedge v_2]\in \mathbb{P}\big(\nicewedge{2} \BK^4\big).
\end{align*}
Moreover, $\bigwedge^2 \R^4$ has an integral structure given by $\bigwedge^2 \Z^4$. Furthermore, 
for any $v_1,v_2 \in \Z^4$ the wedge $v_1\wedge v_2$ is primitive if and only if\footnote{This can be seen for instance using the Smith normal form; see also \cite[Ch.~1, Lemma 2]{casselsGeoNum} for a concrete proof.} 
$v_1,v_2$ is a basis for $L(\Z)$ where $L = \R v_1 \oplus \R v_2$. If this is the case, we may retrieve the wedge $v_1\wedge v_2$ from $a_1(L)$, $a_2(L)$. In fact, identifying $\bigwedge^2 \R^4\cong \R^6$ via the standard (integral) basis $1 \wedge \ii,\ 1\wedge \jj,\ 1 \wedge \kk,\ \ii\wedge  \jj,\ \ii \wedge \kk,\ \jj \wedge \kk$
a direct calculation using bilinearity of $\wedge$ and the maps in \eqref{eq:bilin&antisymm} shows that
\begin{multline*}
v_1 \wedge v_2  
= \tfrac{1}{2} 
	\big(-(a_1+a_2)_1,-(a_1+a_2)_2,-(a_1+a_2)_3, (a_2-a_1)_3,(a_1-a_2)_2,(a_2-a_1)_1 \big)
\end{multline*}
where $a_i = a_i(L)$ for $i=1,2$.

We will now use this to prove the lemma. For the claims concerning the prime $2$  notice that $\tfrac{1}{2}(a_1+a_2) \in \Z^3$ as $v_1\wedge v_2$ is integral and therefore $a_1 \equiv -a_2 \equiv a_2 \mod 2$. (This can also be seen directly from the definition in \eqref{eq:associated intpts}.)
Furthermore, if $\tfrac{1}{4}(a_1+a_2),\tfrac{1}{4}(a_1-a_2) \in \Z^3$ then $\tfrac{1}{2}(v_1\wedge v_2) \in \Z^6$ contradicting primitivity of $v_1\wedge v_2$.

If $p$ is an odd prime with $\frac{1}{p}a_1,\frac{1}{p}a_2 \in \Z^3$ then $\frac{1}{p}(a_1+a_2),\frac{1}{p}(a_1-a_2) \in \Z^3$ and therefore $\tfrac{1}{p}(v_1\wedge v_2) \in \Z^6$. This contradicts again primitivity of $v_1\wedge v_2$
and shows that $(a_1,a_2)$ are pair-primitive.
\end{proof}



Before discussing the appropriate converse to Lemma \ref{lemma:pairwise primitivity} (see Proposition \ref{prop:integrality for splitting map} below), we would like to point out that the integer points associated to a  plane $L~\in~\Gr(\Q)$ need not be primitive in general.
The same is true for the restriction of the ambient quadratic form $Q$ to $L(\Z)$, which can in fact be non-primitive even for square-free discriminants.

\begin{example}\label{exp:nonprim intpts}
\begin{enumerate}[(a)]
\item The plane $L= \R(1+\mathrm{i}) \oplus \R(\mathrm{i}+\jj)$ has square-free discriminant $D = 3$ and primitive associated integer points
$a_1(L) = -\ii-\jj-\kk$ and $a_2(L) = -\ii-\jj+\kk$. 
However, the form $Q|_{L(\Z)}$ is represented by $2x^2+2xy+2y^2$ (in the given basis) and thus non-primitive.
\item The plane $L= \R(1+\mathrm{i}) \oplus \R(\mathrm{j}+\mathrm{k})$ of discriminant $D=4$ has associated integer points $a_1(L) = -2\mathrm{k}$ and $a_2(L) = -2\mathrm{j}$, both of which are non-primitive.
Furthermore, the quadratic form $Q|_{L(\Z)}$ is represented by the non-primitive form $\Nr(1+\mathrm{i})x^2 + \Nr(\mathrm{j}+\mathrm{k}) y^2
= 2x^2+2y^2$.
%
\item The associated integer points of the plane $L = \R (1 + 2 \ii) \oplus \R(\jj+3\kk)$ of discriminant $D=50$ are the non-primitive vector $a_1(L) = 5(\jj-\kk)$ and the vector $a_2(L) = -7 \jj - \kk$. 
This shows that Lemma \ref{lemma:pairwise primitivity}(a)
cannot be improved to include indivisibility of each vector by odd primes.
In the given integral basis of $L$ the form $Q|_{L(\Z)}$ is represented as $5x^2 + 10y^2$. 
\end{enumerate}
\end{example} 

We will return to these primitivity questions in Section \ref{section:primitivity vs glue} where we will reformulate them in terms of glue groups.
Such a reformulation is however not necessary for the proof of Theorem \ref{thm:main}.

 To help the reader  we will usually point out the stronger statements for square-free discriminants.
In this case, the associated integer points are primitive (and not only pair-primitive) and the quadratic forms on the lattices in question are  primitive  except possibly for the common divisor $2$ (cf.~Proposition \ref{prop:Heegnerpointsfor lattices}, Lemma \ref{lemma:accidental CM} and Example \ref{exp:nonprim intpts}(a) above).

\subsection{Integrality properties of the Klein map}

\begin{proposition}[Pair primitivity, converse claim]\label{prop:integrality for splitting map}
Given $(w_1,w_2) \in \Z^3 \times \Z^3$ pair-primitive with $Q(w_1) = Q(w_2)$ the rational plane $\Phi^{-1}([w_1,w_2])$ has associated integer points $w_1,w_2$ if $w_1 \equiv w_2 \mod 2$ and $2w_1,2w_2$ otherwise.

Moreover, for any $L \in \mathcal{R}_D$ the orthogonal complement $L^\perp$ has associated integer points $a_1(L),-a_2(L)$ and discriminant $D$.
\end{proposition}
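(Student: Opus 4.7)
The key is that $\Phi$ identifies $L$ with its class $[a_1(L),a_2(L)]$, so pair-primitivity will pin down a representative of $[w_1,w_2]$ up to simultaneous sign.

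Set $L = \Phi^{-1}([w_1,w_2])$, which by Proposition \ref{prop:splitting map} is a rational plane, and let $(a_1,a_2) = (a_1(L),a_2(L))$ be its associated integer points. Since $\Phi(L) = [a_1,a_2]$ equals $[w_1,w_2]$ in $\kleinsp(\Q)$, there exists $\lambda \in \Q^\times$ with $w_i = \lambda a_i$ for $i=1,2$. Writing $\lambda = m/n$ in lowest terms, I would first argue that $m$ and $n$ are both powers of $2$. Indeed, if an odd prime $p$ divides $n$ then $p$ divides both $a_1$ and $a_2$, contradicting pair-primitivity of $(a_1,a_2)$ from Lemma~\ref{lemma:pairwise primitivity}; and if $p$ divides $m$ then $p$ divides both $w_1$ and $w_2$, contradicting the hypothesized pair-primitivity of $(w_1,w_2)$. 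Since $\gcd(m,n)=1$, we may write $\lambda = 2^k$ for some $k \in \Z$.

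Next, I would rule out $|k| \geq 2$ using the $2$-adic half of pair-primitivity, together with the congruence $a_1 \equiv a_2 \bmod 2$ (which gives $a_1 \pm a_2 \in 2\Z^3$). For $k \geq 2$ one checks $\tfrac{1}{4}(w_1 \pm w_2) = 2^{k-2}(a_1 \pm a_2) \in \Z^3$, violating pair-primitivity of $(w_1,w_2)$; for $k \leq -2$ the roles swap and one contradicts pair-primitivity of $(a_1,a_2)$. Hence $\lambda \in \{\pm 2, \pm 1, \pm \tfrac{1}{2}\}$. The choice $\lambda = \pm 2$ again forces $\tfrac{1}{4}(w_1 \pm w_2) = \tfrac{1}{2}(a_1 \pm a_2) \in \Z^3$ (again using $a_1 \pm a_2 \in 2\Z^3$), which is excluded. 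Thus $\lambda \in \{\pm 1, \pm \tfrac{1}{2}\}$, and since the associated integer points are only defined up to a global sign we may take $\lambda \in \{1, \tfrac{1}{2}\}$.

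Finally I would split on the two remaining values. If $\lambda = 1$ then $(a_1,a_2) = (w_1,w_2)$, which by Lemma~\ref{lemma:pairwise primitivity} forces $w_1 \equiv w_2 \bmod 2$. If $\lambda = \tfrac{1}{2}$ then $(a_1,a_2) = (2w_1, 2w_2)$; checking pair-primitivity of this pair, $\tfrac{1}{4}(a_1 \pm a_2) = \tfrac{1}{2}(w_1 \pm w_2)$, so pair-primitivity demands $w_1 \not\equiv w_2 \bmod 2$. This gives exactly the dichotomy in the first claim.

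For the moreover, let $L \in \mathcal{R}_D$ and set $(a_1,a_2) = (a_1(L),a_2(L))$. By Proposition~\ref{prop:splitting map} we have $\Phi(L^\perp) = [a_1,-a_2]$. The pair $(a_1,-a_2)$ is pair-primitive (since $(a_1,a_2)$ is, and negation preserves both conditions), and moreover $a_1 \equiv -a_2 \bmod 2$ because $a_1 \equiv a_2 \bmod 2$ by Lemma~\ref{lemma:pairwise primitivity}. Applying the first part of the proposition to $(w_1,w_2) = (a_1,-a_2)$ yields that $L^\perp$ has associated integer points $(a_1(L),-a_2(L))$. The discriminant statement is then immediate from Lemma~\ref{lemma:formula lengths}: $\disc(L^\perp) = Q(a_1(L^\perp)) = Q(a_1(L)) = \disc(L) = D$. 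The main subtlety throughout is tracking the $2$-adic condition in pair-primitivity, since the prime $2$ is the only place where the integer points and the wedge product $v_1 \wedge v_2$ behave differently.
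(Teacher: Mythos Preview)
Your proof is correct and follows essentially the same route as the paper: write the associated integer points $(a_1,a_2)$ of $L=\Phi^{-1}([w_1,w_2])$ as a rational multiple of $(w_1,w_2)$, use pair-primitivity of both pairs at odd primes to reduce the scalar to a power of $2$, and then use the congruence $a_1\equiv a_2\bmod 2$ together with the $2$-adic part of pair-primitivity to pin down the remaining cases. The paper organizes the bookkeeping slightly differently (writing $m a_i = n w_i$ with $\gcd(m,n)=1$ and bounding $m$ and $n$ separately rather than bounding $|k|$), but the substance is identical, as is your treatment of the ``moreover'' via Proposition~\ref{prop:splitting map} and Lemma~\ref{lemma:pairwise primitivity}.
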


\begin{proof}
Write $L = \Phi^{-1}([w_1,w_2])$ and choose by Proposition \ref{prop:splitting map} coprime integers $m,n \in \Z$ with $m a_1(L) = n w_1$ and $m a_2(L) = n w_2$.
Since $(w_1,w_2)$ is pair-primitive, we must have that $m$ is not divisible by any odd prime.
Furthermore, $m$ is also not divisible by $2$.
Otherwise, the equality
\begin{align*}
m (a_1(L) \pm a_2(L) ) = n (w_1\pm w_2)
\end{align*}
combined with $a_1(L)\equiv a_2(L) \mod 2$ (see Lemma \ref{lemma:pairwise primitivity})
shows that $ \frac{1}{4}(w_1\pm w_2) \in \Z^3$ contradicting again the pair-primitivity of $(w_1,w_2)$.
Thus, $m =\pm 1$.

Repeating the same argument for the integer $n$ (without any congruence condition on $(w_1,w_2)$ modulo $2$), we see that $n$ is not divisible by any odd prime and not divisible by $4$.
If $w_1 \equiv w_2\mod 2$ then the above argument shows that $n = \pm 1$.
If $w_1 \not\equiv w_2\mod 2$ then $2\mid n$ as $a_1(L) \equiv a_2(L) \mod 2$.

Now let $L \in \mathcal{R}_D$. 
Then one applies the first part to the pair $a_1(L),-a_2(L)$ to deduce that $L^\perp$ (which is equal to $\Phi^{-1}([a_1(L),-a_2(L)])$ by Proposition \ref{prop:splitting map}) indeed has associated integer points $a_1(L),-a_2(L)$. In fact, we have $a_1(L) \equiv a_2(L) \equiv -a_2(L) \mod 2$
and pair-primitivity by Lemma~\ref{lemma:pairwise primitivity}.
\end{proof}

The complete correspondence  (established by Lemma \ref{lemma:pairwise primitivity}
and Proposition \ref{prop:integrality for splitting map}) between 
\begin{itemize}
\item pairs of integer points up to common sign, which have the same length and are pair-primitive and congruent modulo $2$ and
\item rational planes
\end{itemize}
allows us to prove a claim from the beginning of the introduction.

\begin{corollary}\label{cor: congruence condition}
For any $D \in \mathbb{N}$ the set $\mathcal{R}_D$ is non-empty if and only if $D$ is not congruent to $0$, $7$, $12$ or $15 \mod 16$ (i.e.~$D \in \mathbb{D}$).
\end{corollary}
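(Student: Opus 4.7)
The plan is to use the complete correspondence set up in Lemma~\ref{lemma:pairwise primitivity} and Proposition~\ref{prop:integrality for splitting map}, which translates $\mathcal{R}_D \neq \emptyset$ into the existence of a pair-primitive pair $(w_1, w_2) \in \Z^3 \times \Z^3$ satisfying one of two conditions: either $Q(w_1) = Q(w_2) = D$ with $w_1 \equiv w_2 \bmod 2$, or $4 \mid D$ and $Q(w_1) = Q(w_2) = D/4$ with $w_1 \not\equiv w_2 \bmod 2$. Both directions then reduce to an arithmetic case analysis driven by the residue of $D$ modulo $16$, using only the elementary fact that odd squares are $\equiv 1 \bmod 8$ while even squares are $\equiv 0$ or $4 \bmod 8$, together with Legendre's three-square theorem and its primitive refinement.

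For the forward direction I would take $L\in\mathcal{R}_D$ with associated integer points $(a_1,a_2)$ and exploit $Q(a_i)=D$ (Lemma~\ref{lemma:formula lengths}). If $D\equiv 7,15\bmod 16$, then $D\equiv 7\bmod 8$ is not a sum of three squares by Legendre, contradicting the existence of $a_i$. If $D\equiv 0\bmod 4$, the mod~$4$ analysis of squares forces $a_1,a_2\in 2\Z^3$; writing $b_i=a_i/2$, the case $D\equiv 12\bmod 16$ gives $Q(b_i)\equiv 3\bmod 4$, which forces $b_1\equiv b_2\equiv (1,1,1)\bmod 2$ and therefore $(b_1\pm b_2)\in 2\Z^3$; the case $D\equiv 0\bmod 16$ gives $Q(b_i)\equiv 0\bmod 4$, which iterates the argument to yield $a_i\in 4\Z^3$. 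In both remaining residue classes one concludes that $(a_1\pm a_2)/4\in\Z^3$, contradicting pair-primitivity of $(a_1,a_2)$.

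For the backward direction I would construct an admissible pair explicitly in each class of $\mathbb{D}$ modulo $16$, invoking the classical refinement of Legendre's theorem that a positive integer $n$ admits a \emph{primitive} representation as a sum of three squares precisely when $n\not\equiv 0,4,7\bmod 8$. If $D\in\mathbb{D}$ with $D\not\equiv 0\bmod 4$, so $D\bmod 8\in\{1,2,3,5,6\}$, a primitive representation $w$ of $D$ exists and the diagonal pair $(w,w)$ is pair-primitive with trivially matching parity, producing a plane in $\mathcal{R}_D$ via case (a). If $D\equiv 4$ or $8\bmod 16$, I pass to $D'=D/4$, whose residue mod~$8$ lies in $\{1,2,5,6\}$ and hence admits a primitive representation $w$ with exactly one or exactly two odd coordinates; a coordinate permutation $w'$ of $w$ places the odd coordinates in different positions, so $w\not\equiv w'\bmod 2$ and the pair $(w,w')$ is pair-primitive because shared primitivity rules out any common odd prime divisor and because $w+w'$ retains an odd coordinate (so $(w\pm w')/4\notin\Z^3$). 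This lands in case (b) and provides a plane of discriminant $D$.

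The only real bookkeeping is the $\bmod 16$ analysis and the pair-primitivity verification for the constructed examples, both of which amount to coordinate-wise inspection; the main conceptual obstacle is simply recognising that the combined parity and pair-primitivity constraints in the correspondence are strictly stronger than mere representability as a sum of three squares, which is why classes such as $D\equiv 12$ or $0\bmod 16$ are ruled out even though Legendre alone does not exclude every such $D$.
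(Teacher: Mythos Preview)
Your proof is correct and follows essentially the same approach as the paper: both reduce via Lemma~\ref{lemma:pairwise primitivity} and Proposition~\ref{prop:integrality for splitting map} to the existence of suitable pair-primitive pairs, handle $4\nmid D$ with a diagonal pair $(w,w)$ from a primitive three-square representation, and handle $4\mid D$ by passing to $D/4$ and using a coordinate permutation to break the mod~$2$ congruence. The only difference is organizational---you split into forward/backward implications whereas the paper splits by the cases $4\nmid D$ and $4\mid D$---but the substance is identical.
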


In particular, if $D$ is not divisible by $4$ (e.g.~ if $D$ is square-free) and $D\not\equiv 7 \mod 8$, then the corollary says that $\mathcal{R}_D$ is non-empty.
The proof will essentially consist in applying the above correspondence (i.e.\ Lemma \ref{lemma:pairwise primitivity} and Proposition \ref{prop:integrality for splitting map}) and Legendre's theorem, which states that a number $D\in \N$ can be written as $D = x^2+y^2+z^2$ for $(x,y,z) \in \Z^3$ primitive if and only if $D \not \equiv 0,4,7\mod 8$. 

\begin{proof}
Lemma \ref{lemma:pairwise primitivity} and Proposition \ref{prop:integrality for splitting map} together
imply that $\mathcal{R}_D$ is non-empty if and only if there exists a tuple $(v,v') \in \Z^3 \times \Z^3$ 
that is pair-primitive with $Q(v) = Q(v') =D$ and $v \equiv v' \mod 2$.

Assume first that $4 \nmid D$. 
We claim that in this case the pair-primitive tuple $(v,v')$ exists if and only if $D \not\equiv 7,15\mod 16$. In fact, if the pair-primitive tuple exists, the vectors represent $D$ as a sum of three squares
which implies $D\not\equiv 7,15 \mod 16$. 
Conversely, we apply Legendre's theorem to find a primitive vector $v$
and set the second integer $v'$ equal to $v$. 

 So suppose now that $4 \mid D$. Then the set $\mathcal{R}_D$ is non-empty if and only if there exists $(w,w') \in \Z^3 \times \Z^3$ pair-primitive with $Q(w) = Q(w') = \frac{D}{4}$ and $w \not\equiv w' \mod 2$.
In fact, if $L \in \mathcal{R}_D$ then $w = \frac{1}{2}a_1(L)$ and $w' = \frac{1}{2}a_2(L)$ are integer vectors as $4 \mid D$ and are not congruent mod $2$ by pair-primitivity of $(a_1(L),a_2(L))$, see Lemma \ref{lemma:pairwise primitivity}.
For the converse one can apply Proposition~\ref{prop:integrality for splitting map} to $(w,w')$.

We claim that there is such a pair $(w,w')$ if and only if $\frac{D}{4} \not \equiv 0,3 \mod 4$ i.e. $D \not \equiv 0,12 \mod 16$.
Indeed, if $(w,w')$ satisfies $Q(w) = Q(w') = \frac{D}{4}$ and $\frac{D}{4} \equiv 0 \mod 4$ then $w$ and $w'$ have only even entries.
Similarly, if $\frac{D}{4} \equiv 3 \mod 4$ the vectors $w,w'$ have only odd entries.
In either case, $w$ and $w'$ are congruent mod $2$.
Conversely, if $\frac{D}{4} \not\equiv 0,3 \mod 4$ there is a primitive vector $w$ with $Q(w) = \frac{D}{4}$ by Legendre's theorem. 
By assumption on $D$, $w$ must have an even and an odd entry so that switching two coordinates yields a vector $w'$ with $Q(w') = \frac{D}{4}$, $w \not\equiv w' \mod 2$ and $(w,w')$ pair-primitive.

This proves the corollary in both cases $4 \mid D$ and $4 \nmid D$.
\end{proof}

\begin{remark}
If one considers dimensions $3 \leq k < n$ with $n-k \geq 2$ and the analogously defined set $\mathcal{R}^{k,n}(D)$ of rational subspaces of dimension $k$ in $\Q^n$ of discriminant $D$, one can show by elementary means using an induction technique from \cite{Schmidt-count} that $\mathcal{R}^{k,n}(D)$ is never empty -- see \cite{Grassmannianhigherdim}.
The authors are however not aware of any counting results as in Corollary~\ref{cor:count} below.
\end{remark}

The above can also be used to obtain a count on the number of points in~$\mathcal{R}_D$.

\begin{corollary}\label{cor:count}
For any $D \in \BD$ we have $|\mathcal{R}_D| = D^{1+o(1)}$.
\end{corollary}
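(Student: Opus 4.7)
The plan is to translate the count $|\mathcal{R}_D|$ into a count of integer points on spheres via the correspondence from Lemma \ref{lemma:pairwise primitivity} and Proposition \ref{prop:integrality for splitting map}: setting $n = D$ when $4 \nmid D$ and $n = D/4$ when $4 \mid D$, rational planes of discriminant $D$ biject, up to a common sign, with pair-primitive pairs $(w_1,w_2) \in \Z^3 \times \Z^3$ satisfying $Q(w_1) = Q(w_2) = n$ together with the appropriate parity condition ($w_1 \equiv w_2 \bmod 2$ in the first case, $w_1 \not\equiv w_2 \bmod 2$ in the second). The essential analytic input is the classical estimate
\begin{equation*}
r_3(n) := \#\{v \in \Z^3 : Q(v) = n\} = n^{1/2 + o(1)} \qquad (n \in \BD),
\end{equation*}
whose upper bound $r_3(n) \ll_\epsilon n^{1/2+\epsilon}$ is elementary, and whose matching lower bound, even for the primitive part $r_3^{\mathrm{prim}}(n)$, follows from Gauss's identification of $r_3^{\mathrm{prim}}(n)$ with a multiple of the class number $h(-n)$ together with Siegel's ineffective bound $h(-n) \gg_\epsilon n^{1/2-\epsilon}$.

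The upper bound is immediate from injectivity of the Klein map (Proposition \ref{prop:splitting map}):
\begin{equation*}
|\mathcal{R}_D| \leq \tfrac{1}{2}\bigl(r_3(D)^2 + r_3(D/4)^2\bigr) \leq D^{1+o(1)},
\end{equation*}
where the second summand appears only when $4 \mid D$.

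For the lower bound, I would produce $\gg D^{1-o(1)}$ admissible pairs as follows. The case analysis in the proof of Corollary \ref{cor: congruence condition} shows that $n \in \BD$ in both cases, so the Gauss--Siegel input yields $|V_n^{\mathrm{prim}}| := r_3^{\mathrm{prim}}(n) = n^{1/2+o(1)}$ where $V_n^{\mathrm{prim}}$ denotes the set of primitive $v \in \Z^3$ with $Q(v) = n$. Any pair in $V_n^{\mathrm{prim}} \times V_n^{\mathrm{prim}}$ is automatically pair-primitive: primitivity of each coordinate vector rules out common division by an odd prime, while a short argument (assuming both $\tfrac{1}{4}(w_1+w_2), \tfrac{1}{4}(w_1-w_2) \in \Z^3$ would yield $\tfrac{1}{2}w_1 \in \Z^3$, contradicting primitivity of $w_1$) handles the factor-of-$4$ condition. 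It remains only to impose the correct parity. In the case $4 \nmid D$, pigeonholing $V_n^{\mathrm{prim}}$ over $(\Z/2\Z)^3$ produces a residue class of size $\geq |V_n^{\mathrm{prim}}|/8$, giving $\gg |V_n^{\mathrm{prim}}|^2$ parity-matched pairs. In the case $4 \mid D$, the parity analysis in the proof of Corollary \ref{cor: congruence condition} exhibits primitive representations of $n = D/4$ in at least two distinct classes modulo $2$, so the same pigeonhole produces $\gg |V_n^{\mathrm{prim}}|^2$ pairs with $w_1 \not\equiv w_2 \bmod 2$. Dividing by the common-sign factor of two yields $|\mathcal{R}_D| \gg D^{1-o(1)}$.

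The principal obstacle is the Gauss--Siegel lower bound $r_3^{\mathrm{prim}}(n) \gg_\epsilon n^{1/2-\epsilon}$; it is ineffective due to its reliance on Siegel's theorem on class numbers, but sufficient for the asymptotic $D^{1+o(1)}$. Everything else in the argument is elementary bookkeeping around primitivity, parity, and the common-sign identification.
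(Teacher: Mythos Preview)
Your approach is essentially the same as the paper's: both use the correspondence of Lemma~\ref{lemma:pairwise primitivity} and Proposition~\ref{prop:integrality for splitting map} to reduce to counting pairs of integer points on spheres, and both invoke the Gauss--Siegel bound $r_3^{\mathrm{prim}}(n) = n^{1/2+o(1)}$. The upper bound and the $4\nmid D$ lower bound are fine.

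There is, however, a genuine gap in your $4\mid D$ lower bound. You want $\gg N^2$ pairs $(w_1,w_2)\in V_n^{\mathrm{prim}}\times V_n^{\mathrm{prim}}$ with $w_1\not\equiv w_2\bmod 2$, but ``the same pigeonhole'' does not deliver this: pigeonholing over $(\Z/2\Z)^3$ only bounds the \emph{largest} residue class from below, which helps for \emph{matched} pairs, not mismatched ones. Merely knowing that two classes are nonempty is consistent with one class having $N-1$ points and the other having $1$, yielding only $O(N)$ mismatched pairs. The fix (which is what the paper actually uses) is the $\SO_3(\Z)$-symmetry: since $D/4\equiv 1$ or $2\bmod 4$, every primitive $w$ has both an even and an odd entry, so coordinate permutations move $w$ among the three possible residue classes transitively, forcing all three classes to have size $N/3$. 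Then the count of mismatched pairs is $N^2-3(N/3)^2=\tfrac{2}{3}N^2$. Equivalently, as the paper phrases it, for any pair $(w,w')$ there exists $g\in\SO_3(\Z)$ with $g.w'\not\equiv w\bmod 2$, and this $\leq 24$-to-$1$ map from all pairs to good pairs gives $\gg N^2$.
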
 

\begin{proof}
We denote by $r_3(D)$ the number of integer vectors on the sphere of radius $\sqrt{D}$ and by $r_{3,\text{prim}}(D)$ the number of primitive integer vectors. 
Our first goal is to recall that
\begin{equation}\label{fountain-discussion}
D^{\frac12+o(1)}=r_{3,\text{prim}}(D)\leq r_3(D)= D^{\frac12+o(1)}. 
\end{equation}
It is a consequence of Siegel's lower bound \cite{siegellowerbound} 
that $r_3(D)=r_{3,\text{prim}}(D)$ is of the size $D^{\frac{1}{2}+o(1)}$ 
when $D$ is square-free.
In this case, the class group $\Cl(\mathcal{O}_D)$ of the ring of integers $\mathcal{O}_D$ in $\Q(\sqrt{-D})$ acts freely and transitively on a quotient of $\setc{v \in \Z^3}{Q(v) = D}$ by a subgroup of $\SO_3(\Z)$ (see \cite[Prop.~3.5]{Linnikthmexpander}).

Assume now that $D$ is not necessarily square-free and write $D = D'f^2$ where $D'$ is the largest square-free divisor of $D$.
Then one can express (see e.g.~\cite{cooperhirschhorn}) the number $r_{3,\text{prim}}(D)$ as
\begin{align*}
r_{3,\text{prim}}(D) 
= r_{3}(D') f \prod_{p}\left(1- p^{-1}\left(\tfrac{D'}{p}\right)\right)
\end{align*}
where $p$ runs over the odd prime divisors of $f$ and $(\tfrac{\cdot}{p})$ denotes the Legendre symbol.
From this, one deduces that $r_{3,\text{prim}}(D) = D^{\frac{1}{2}+o(1)}$.

Finally we recall that the number of divisiors of $D$ is $  D^{o(1)}$. Summing over $r_{3,\text{prim}}(D'q^2)$ for all square divisors $q^2$ of $D$ we obtain the upper bound
in \eqref{fountain-discussion}.

In particular, \eqref{fountain-discussion} implies that $|\mathcal{R}_D|\leq D^{1+o(1)}$ as the integer points associated to a plane are uniquely determined up to a simultaneous sign and are of norm $\sqrt{D}$.

If $4 \nmid D$ and $(v,v')$ is any pair of primitive integer points of norm $\sqrt{D}$, there exists $g \in \SO_3(\Z)$ such that $g.v' \equiv v \mod 2$. Thus, $|\mathcal{R}_D| \gg r_{3,\text{prim}}(D)^2$ concluding the proof in this case.

If $4\mid D$ and $(w,w')$ is any pair of primitive integer points of norm $\sqrt{D}/2$ there exists $g \in \SO_3(\Z)$ such that $g.w' \not\equiv w \mod 2$ (since $\frac{D}{4} \not\equiv 0,3\mod 4$).
Therefore, we have $|\mathcal{R}_D| \gg r_{3,\text{prim}}(D/4)^2 = D^{1+o(1)}$ also in this case.
\end{proof}

\subsection{Pointwise stabilizers}

For any plane $L \in \Grnd(\BK)$ define the pointwise stabilizer subgroup of $L$ as the connected $\BK$-group stabilizing any point in $L$. In particular, for any $\BK$-algebra $A$
\begin{align*}
\mathbb{H}_L^{\mathrm{pt}}(A)
= \setc{g\in \SU_2^2(A)}{g.x = x \text{ for all } x  \in L(A)}.
\end{align*}
The proof of the dynamical version of Theorem \ref{thm:main} will use the fact that the subgroup $\BH_L^{\mathrm{pt}}$ exhibits a ``$45^\circ$-degree'' twist with respect to the subgroups $\mathbb{H}_{a_1(L)}$, $\mathbb{H}_{a_2(L)}$. Let us illustrate this in an example first (see also Section \ref{section:appendixA}).

\begin{example}\label{exp:pointwisestab}
Consider the plane $\langle 1,\ii \rangle$ and let $h= (h_1,h_2)$ be an element of $\mathbb{H}_{\langle 1,\ii \rangle}^{\mathrm{pt}}(A)$ where $A$ is a $\BK$-algebra.
In particular, $h$ fixes $1$ or in other words $h_1 = h_2$. 
Furthermore, we have $h.\ii = h_1 \ii h_1^{-1} = \ii$. 
This shows that
\begin{align*}
\mathbb{H}_{\langle 1,\ii \rangle}^{\mathrm{pt}}(A) 
= \setc{(h_1,h_1)}{h_1 \in \mathbb{H}_\ii(A) = \Stab_{\SU_2}(\ii)(A)}.
\end{align*}
Similarly, we claim that
\begin{align*}
\mathbb{H}_{\langle \jj,\kk \rangle}^{\mathrm{pt}}(A) 
= \setc{(h_1,h_1^{-1})}{h_1 \in \mathbb{H}_\ii(A) }.
\end{align*}
For this, let $h= (h_1,h_2) \in \mathbb{H}_{\langle \jj,\kk \rangle}^{\mathrm{pt}}(A)$.
Since $\mathbb{H}_{\langle \jj,\kk \rangle}^{\mathrm{pt}}$ is contained in $\mathbb{H}_{\langle \jj,\kk \rangle} = \mathbb{H}_{\ii} \times \mathbb{H}_{\ii}$ (see Proposition \ref{prop:splitting map}), we have $h_1,h_2 \in \mathbb{H}_{\ii}(A)$.
Furthermore, by assumption $h_1 \jj = \jj h_2$ or equivalently $h_1 = \jj h_2 \jj^{-1}$.
However, notice that $h_1,h_2 \in \langle 1,\ii \rangle_A$ and therefore $h_1 = \jj h_2 \jj^{-1} = h_2^{-1}$ as $(\jj,\jj)$ acts by conjugation $x \mapsto \overline{x}$ on the plane $\langle 1,\ii \rangle$. 
\end{example}

\begin{center}
	\includegraphics[scale=0.14]{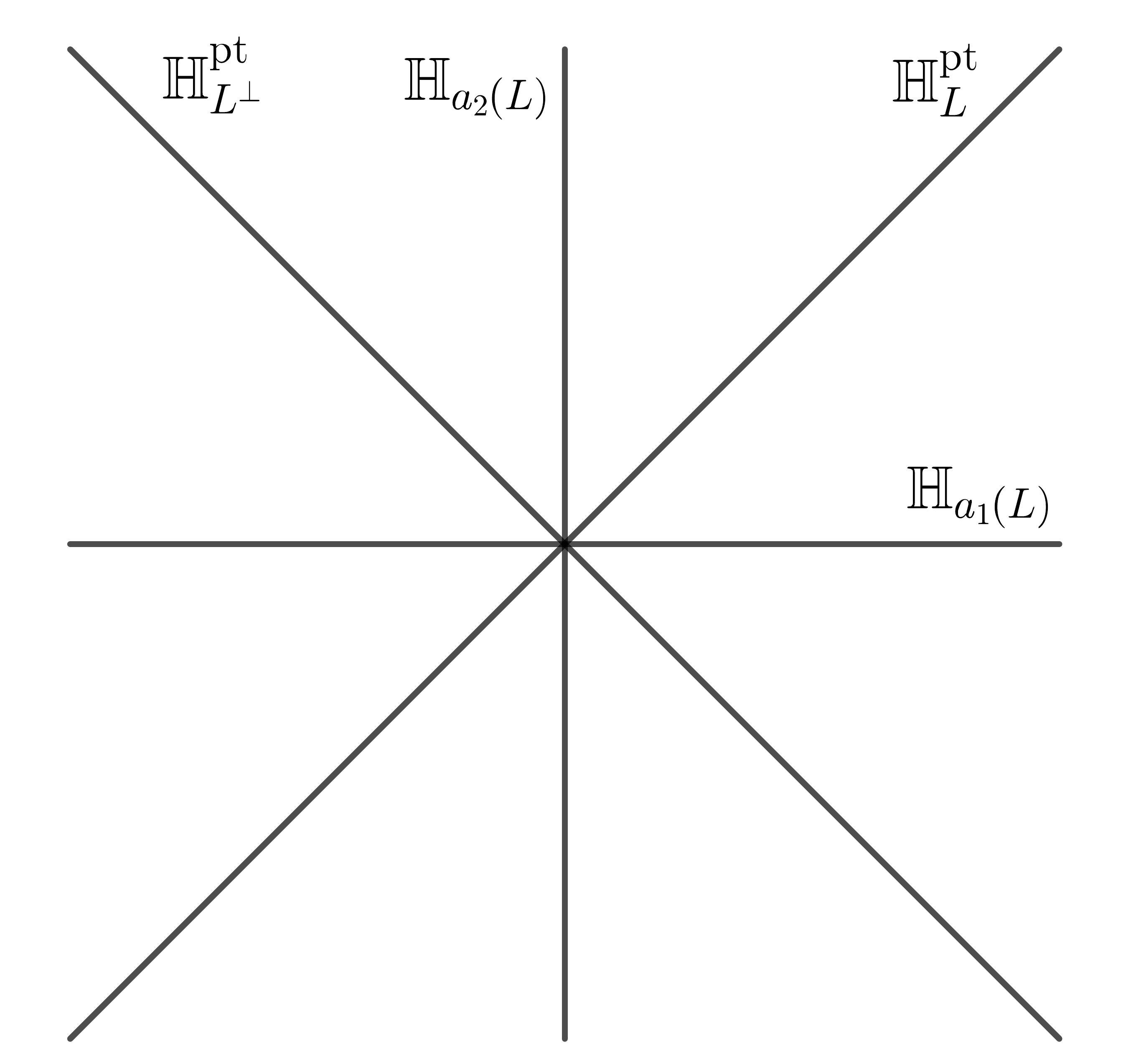}
\end{center}

In general the following holds:
\begin{lemma}[Pointwise stabilizers]\label{lemma:pointwisestab}
Let $L \in \Grnd(\BK)$ be a non-degenerate plane and let $g \in \quat(\BK)^\times$ be any element
with $a_2(L) = g a_1(L) g^{-1}$. Then
\begin{align*}
\mathbb{H}_L^{\mathrm{pt}}(A)
&= \setc{(h_1,gh_1g^{-1})}{h_1 \in \mathbb{H}_{a_1(L)}(A)}, \\
\mathbb{H}_{L^\perp}^{\mathrm{pt}}(A)
&=\setc{(h_1,gh_1^{-1}g^{-1})}{h_1 \in \mathbb{H}_{a_1(L)}(A)}
\end{align*}
for any $\BK$-algebra $A$.
\end{lemma}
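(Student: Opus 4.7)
The plan is to first describe $L$ explicitly as a subset of $\quat(\BK)$ in terms of $a_1 := a_1(L)$ and the intertwiner $g$, and then impose the pointwise-fixing condition directly. By Proposition~\ref{prop:splitting map}, $L = \{x \in \quat(\BK) : a_1 x = x a_2\}$. Substituting $a_2 = g a_1 g^{-1}$ and setting $y = xg$ turns this into $a_1 y = y a_1$, so that $L = C_{\quat(\BK)}(a_1)\, g^{-1}$. Since $a_1$ is a nonzero pure quaternion it satisfies $a_1^2 = -Q(a_1) \in \BK^\times$, so its centralizer in $\quat(\BK)$ is the commutative two-dimensional subalgebra $\BK[a_1] = \BK \oplus \BK a_1$, and hence $L = \BK[a_1]\, g^{-1}$. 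In particular $\mathbb{H}_{a_1(L)} \subset \BK[a_1]$.

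Next I would use this to prove the first identity. An element $(h_1, h_2) \in \SU_2^2(\BK)$ lies in $\mathbb{H}_L^{\mathrm{pt}}$ iff $h_1 x h_2^{-1} = x$ for every $x \in L$, i.e., $h_1 c = c (g^{-1} h_2 g)$ for all $c \in \BK[a_1]$. Taking $c = 1$ forces $h_2 = g h_1 g^{-1}$, and the remaining conditions collapse to $h_1 c = c h_1$ for all $c \in \BK[a_1]$, which forces $h_1 \in C_{\quat(\BK)}(a_1) \cap \SU_2 = \mathbb{H}_{a_1(L)}$. Conversely, every pair $(h_1, g h_1 g^{-1})$ with $h_1 \in \mathbb{H}_{a_1(L)}$ lies in $\SU_2^2(\BK)$ (conjugation preserves the norm) and fixes $L$ pointwise, giving the first identity.

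For $L^\perp$ I would produce an explicit intertwiner between $a_1$ and $-a_2 = a_2(L^\perp)$ (using $\Phi(L^\perp) = [a_1, -a_2]$ from Proposition~\ref{prop:splitting map}). Pick any nonzero pure quaternion $b$ orthogonal to $a_1$. Expanding $\langle u, v \rangle = \tfrac{1}{2}\Tr(u\overline{v})$ for pure $u,v$ yields $\langle u,v\rangle = -\tfrac{1}{2}(uv+vu)$, so orthogonal pure quaternions anticommute; in particular $b a_1 b^{-1} = -a_1$, and therefore $g' := gb$ satisfies $g' a_1 g'^{-1} = -a_2$. Applying the first identity to $L^\perp$ with intertwiner $g'$ gives
\begin{align*}
\mathbb{H}_{L^\perp}^{\mathrm{pt}} = \{(h_1, g b h_1 b^{-1} g^{-1}) : h_1 \in \mathbb{H}_{a_1(L)}\}.
\end{align*}

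Finally, I would execute the twist $b h_1 b^{-1} = h_1^{-1}$ to bring this into the stated form. Writing $h_1 = \alpha + \beta a_1$ with $\alpha, \beta \in \BK$, conjugation by $b$ sends $a_1 \mapsto -a_1$ and fixes $\BK$, so $b h_1 b^{-1} = \alpha - \beta a_1 = \overline{h_1}$; combined with $h_1\overline{h_1} = Q(h_1) = 1$, this yields $b h_1 b^{-1} = h_1^{-1}$. Substituting gives the claimed formula for $\mathbb{H}_{L^\perp}^{\mathrm{pt}}$. The main obstacle is conceptual rather than computational: one must recognize that $L = \BK[a_1(L)]\, g^{-1}$ is a coset of the commutative subalgebra $\BK[a_1(L)]$, which collapses the pointwise-stabilizer condition to a single equation expressing $h_2$ in terms of $h_1$, while passing to $L^\perp$ introduces the nontrivial involution of $\BK[a_1]$ over $\BK$---exactly the $45^\circ$ twist visible in Example~\ref{exp:pointwisestab}.
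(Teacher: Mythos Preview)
Your proof is correct and takes a genuinely different route from the paper's. The paper argues by transport of structure: it picks $(\alpha,\beta)\in\SU_2^2$ sending the model plane $\langle 1,\ii\rangle$ to $L$, conjugates the explicit pointwise stabilizers computed in Example~\ref{exp:pointwisestab}, and then observes that the resulting intertwiner $\beta\alpha^{-1}$ may be replaced by any $g$ with $ga_1(L)g^{-1}=a_2(L)$. You instead work intrinsically with the description $L=\BK[a_1]\,g^{-1}$ coming from the inverse Klein map and read off the pointwise condition directly. Your approach is more self-contained (no reduction to a model plane, no implicit passage to an algebraic closure to get transitivity) and makes the role of the commutative subalgebra $\BK[a_1]$ transparent; the paper's approach, on the other hand, makes the link to the explicit $45^\circ$ picture of Example~\ref{exp:pointwisestab} more visible.

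Two small points worth tightening. First, the claim $a_1^2=-Q(a_1)\in\BK^\times$ need not hold over a general $\BK$ of characteristic zero (the ternary form $Q|_{\quat_0}$ can be isotropic), but your conclusion $C_{\quat(\BK)}(a_1)=\BK[a_1]$ is correct regardless: any non-central element of a quaternion algebra has two-dimensional centralizer. Second, when you pick $b\in\quat_0(\BK)$ orthogonal to $a_1$ you need $b$ invertible, i.e.\ $Q(b)\neq 0$; this is always possible because a nondegenerate ternary quadratic form has Witt index at most one and hence $a_1^\perp\cap\quat_0(\BK)$ cannot be totally isotropic. With these two remarks the argument is complete.
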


We remark that for any invertible $x\in L(\BK)$ the element $g=x^{-1}$ has the property\footnote{Alternatively, recall that the projective group of invertible quaternions is isomorphic over $\Q$ to $\SO_3$ via the action by conjugation on $\quat_0$ (see \cite[Thm.~3.3]{vigneras}). Thus there exists $g \in \quat(\BK)^\times$ with $ga_1g^{-1}=a_2$ by Witt's theorem, see e.g.\ \cite[p.~21]{cassels}.} 
required in the lemma (see Proposition \ref{prop:splitting map}).
Such an element $g$ exists as $L$ is non-degenerate.

\begin{proof}
Let $(\alpha,\beta) \in \SU_2^2(\overline{\BK})$ be such that $(\alpha,\beta).\langle 1,\ii \rangle_{\overline{\BK}} = L(\overline{\BK})$.
Applying Proposition \ref{prop:splitting map} we have
\begin{align*}
\BH_{a_1(L)} \times \BH_{a_2(L)}
= \mathbb{H}_L
= (\alpha,\beta) \mathbb{H}_{\langle 1,\ii \rangle} (\alpha,\beta)^{-1}
= \alpha \mathbb{H}_\ii \alpha^{-1} \times \beta \mathbb{H}_\ii \beta^{-1}
\end{align*}
and in particular $\alpha \mathbb{H}_\ii \alpha^{-1} = \mathbb{H}_{a_1(L)}$.
By Example \ref{exp:pointwisestab}
\begin{align*}
\mathbb{H}_L^{\mathrm{pt}}(A\otimes \overline{\BK})
&= (\alpha,\beta) \mathbb{H}_{\langle 1,\ii \rangle}^{\mathrm{pt}}(A\otimes \overline{\BK}) (\alpha,\beta)^{-1}
= \setc{(\alpha h_1 \alpha^{-1}, \beta h_1 \beta^{-1})}{h_1 \in \mathbb{H}_\ii(A\otimes \overline{\BK})} \\
&= \setc{(h_1,\beta \alpha^{-1} h_1 \alpha \beta^{-1})}{h_1\in \mathbb{H}_{a_1(L)}(A\otimes \overline{\BK})}
\end{align*}
so that $\BH_{L}^{\mathrm{pt}}$ is the graph of the morphism $\BH_{a_1(L)} \to \BH_{a_2(L)}$ given by conjugation with $\alpha \beta^{-1}$. 
Furthermore, notice that
\[
 \beta \alpha^{-1} a_1(L) \alpha \beta^{-1} = \beta \ii \beta^{-1} = a_2(L)=g a_1(L) g^{-1}.
\]
Hence, $\beta \alpha^{-1} \mathbb{H}_{a_1(L)}=g\mathbb{H}_{a_1(L)}$
and one may replace $\beta \alpha^{-1}$ by $g$ in the above (recall that $\BH_{a_1(L)}$ is abelian) to obtain the first part of the lemma.

The second part follows analogously by noting that $(\alpha,\beta).\langle \jj,\kk \rangle_{\overline{\BK}} = L^\perp(\overline{\BK})$ and applying Example \ref{exp:pointwisestab} again.
\end{proof}

We finish this section by clarifying the meaning of the assumed congruence conditions in Theorem \ref{thm:main}.

\begin{lemma}\label{lemma:reason for congruencecond.}
Let $L\in \mathcal{R}_D$ be a rational plane. If $p$ is an odd prime so that $\legendre{-D}{p} = 1$, then $\mathbb{H}_L(\Q_p)$ is a split torus of rank two.
\end{lemma}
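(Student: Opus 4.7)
The plan is to reduce the claim to a statement about one-dimensional tori using the product decomposition $\mathbb{H}_L = \mathbb{H}_{a_1(L)} \times \mathbb{H}_{a_2(L)}$ from Proposition~\ref{prop:splitting map}, and then identify each factor as the norm-one torus of the imaginary quadratic extension $\Q(\sqrt{-D})/\Q$. Once this identification is in hand, a direct application of Hensel's lemma combined with the congruence hypothesis will finish the proof.

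Concretely, I would first fix $i \in \{1,2\}$ and set $v = a_i(L) \in \quat_0(\Q)$. By Lemma~\ref{lemma:formula lengths} we have $Q(v) = D$, so $v^2 = -v\overline{v} = -D$, and consequently the centralizer of $v$ in $\quat$ is the two-dimensional commutative $\Q$-subalgebra $\Q[v] = \Q \oplus \Q v \cong \Q(\sqrt{-D})$. Writing an element of $\Q[v]$ as $a + bv$ with $a,b \in \Q$, its reduced norm is $a^2 + D b^2$. Since $\mathbb{H}_v$ consists exactly of the norm-one elements of $\quat$ commuting with $v$, this realizes $\mathbb{H}_v$ as the algebraic $\Q$-group cut out by $a^2 + Db^2 = 1$, i.e.~as the norm-one torus $R^1_{\Q(\sqrt{-D})/\Q}(\mathbb{G}_m)$.

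To conclude, the hypothesis $-D \in (\mathbb{F}_p^\times)^2$ forces $p \nmid D$ and says that $-D$ is a nonzero square modulo the odd prime $p$. Hensel's lemma applied to the polynomial $X^2 + D$ then lifts this to the statement that $-D$ is a square in $\Z_p^\times$. Therefore the étale $\Q_p$-algebra $\Q_p \otimes_\Q \Q(\sqrt{-D})$ splits as $\Q_p \oplus \Q_p$, and its norm-one torus base-changes to the one-dimensional split torus $\mathbb{G}_m$ over $\Q_p$. Combined with the product decomposition from the first step this shows $\mathbb{H}_L(\Q_p)$ is a split torus of rank two.

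I do not expect any real obstacle; the argument is largely mechanical once the algebraic identification of $\mathbb{H}_v$ in the second paragraph is in place. The one small point worth double-checking is that the cut-out equation $a^2 + Db^2 = 1$ genuinely yields the full norm-one torus (and not merely a proper subgroup), which follows cleanly from the definition of $\SU_2$ as the group of norm-one quaternions together with the fact that every element of $\mathbb{H}_v$ lies in the centralizer $\Q[v]$.
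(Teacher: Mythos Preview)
Your proof is correct and follows essentially the same approach as the paper: reduce via $\mathbb{H}_L = \mathbb{H}_{a_1(L)}\times\mathbb{H}_{a_2(L)}$ and Lemma~\ref{lemma:formula lengths} to showing each $\mathbb{H}_v(\Q_p)$ is split, then use Hensel's lemma on $X^2+D$. The only cosmetic difference is that the paper passes through the splitting $\quat(\Q_p)\cong\Mat_2(\Q_p)$ and phrases the conclusion as diagonalizability of a traceless matrix, whereas you work intrinsically inside $\quat$ and identify $\mathbb{H}_v$ directly with the norm-one torus of $\Q[v]\cong\Q(\sqrt{-D})$; the mathematical content is the same.
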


\begin{proof}
Let $v \in \quat_0(\Q)$ be any vector with $Q(v) = D$ and note that there is an isomorphism $\quat(\Qp) \cong \Mat_2(\Q_p)$ of $\Q_p$-algebras mapping $v$ to some traceless $X\in \Mat_2(\Q_p)$ of determinant $D$. Indeed, the norm form of $\quat$ is isotropic over $\Q_p$ so that $\quat(\Q_p)$ is not a division algebra and hence isomorphic to $\Mat_2(\Q_p)$ \cite[Prop.~7.6.2]{voight}.
The stabilizer subgroup $\mathbb{H}_v(\Q_p)$ is thus isomorphic to
\begin{align*}
\setc{g \in \SL_2(\Q_p)}{gX = Xg}.
\end{align*}
By assumption and by Hensel's lemma the characteristic polynomial $x^2 + D$ of $X$ has two distinct, non-zero roots in $\Q_p$ so $X$ is diagonalizable over $\Q_p$. Thus, $\mathbb{H}_v(\Q_p)$ is a one-dimensional split torus.

By Lemma \ref{lemma:formula lengths} one can apply the above to $v=a_1(L)$ and $v=a_2(L)$.
The claim then follows as $\mathbb{H}_L(\Q_p) = \mathbb{H}_{a_1(L)}(\Q_p) \times \mathbb{H}_{a_2(L)}(\Q_p)$ by Proposition~\ref{prop:splitting map}.
\end{proof}

\section{CM-points}\label{section:CMpts}

\subsection{Defining the shapes}\label{section:def shapes}

 We recall that $P,P_1,P_2$ denote the factor maps from $\SU_2\times\SU_2$
to $\SO_4$ respectively $\SO_3$ using the first or second factor, see Section \ref{section:quaternions}.
In the following we identify $\quat(\R)$ with row vectors in $\R^4$ using the basis $1,\ii,\jj,\kk$
and $\quat_0(\R)$ with row vectors in $\R^3$ using the basis $\ii,\jj,\kk$.   In particular,
we choose to let $g\in\SO_4(\R)$ act on row vectors $v\in\R^4$ via $g.v = vg^t$. 
Observe that this action simply corresponds to the usual action on column vectors.

Let us now fix a plane $L \in \mathcal{R}_D$ for some $D\in \mathbb{D}$. 
We choose an integer matrix $A_{1,L}\in \SL_4(\Z)$ whose first two rows form a basis of $L(\Z)$ and fix an isometry $k_L = (\alpha_L,\beta_L) \in\SU_2^2(\R)$ with $k_L.\langle 1, \mathrm{i}\rangle_\R=L(\R)$. In particular, this also implies $k_L.\langle \jj, \kk\rangle_\R =L^\perp(\R)$
by orthogonality, respectively
\begin{align}\label{eq:correctversion}
\alpha_L^{-1}. a_1(L)=  \beta_L^{-1}. a_2(L)= \pm  \sqrt{D}\cdot \ii.
\end{align}
by the equivariance of the isomorphism in Proposition~\ref{prop:splitting map}. Then we have
\begin{align*}
A_{1,L} P(k_L) \in \left\lbrace
\left(\begin{array}{cc|cc}
\ast&\ast & 0 & 0 \\
\ast&\ast & 0 & 0 \\ \hline
\ast &\ast & \ast &\ast \\
\ast & \ast & \ast &\ast
\end{array}\right)  \right\rbrace.
\end{align*}
Here, we used that for any $v\in L$ viewed as a row vector the vector $vP(k_L)$ corresponds to $k_L^{-1}.v$ when viewed as an element of $\quat(\R)$ and that $k_L^{-1}.v \in \langle 1,\ii \rangle$ where $\langle 1,\ii \rangle$ is identified with $\R^2 \times \set{(0,0)}$.
The \textbf{shape} of $L(\Z)$ is defined as
\begin{align*}
[L(\Z)] &:= \PGL_2(\Z) \pi_1(A_{1,L} P(k_L)) \PO(2) \in \mathcal{X}_2
\end{align*}
where~$\pi_1$ is the projection onto the upper left block and where as in the introduction
\begin{align*}
\mathcal{X}_2 &= \lquot{\PGL_2(\Z)}{\mathbb{H}^2} 
= \lrquot{\PGL_2(\Z)}{\PGL_2(\R)}{\PO(2)}
\end{align*} 
Note that the definition of the shape is independent of the choices of $A_{1,L}$ and~$k_L$. 

Similarly, one can choose a matrix $A_{2,L}\in \SL_4(\Z)$ whose last two rows form a basis of $L^\perp(\Z)$ so that
\begin{align*}
A_{2,L} P(k_L) \in \left\lbrace
\left(\begin{array}{cc|cc}
\ast&\ast & \ast & \ast \\
\ast&\ast & \ast & \ast \\ \hline
 0 & 0 & \ast &\ast \\
 0 & 0 & \ast &\ast
\end{array} \right)  \right\rbrace.
\end{align*}
Denoting by $\pi_2$ the projection onto the lower right block the shape of $L^\perp(\Z)$ is
\begin{align*}
[L^\perp(\Z)] &:= \PGL_2(\Z) \pi_2(A_{2,L} P(k_L)) \PO(2) \in \mathcal{X}_2,
\end{align*}
which is again independent of the choice of $A_{2,L}$ and $k_L$. 

As in the introduction, the shapes $[L(\Z)]$ and $[L^\perp(\Z)]$ will be called the geometric CM points attached to~$L$.
By Proposition \ref{prop:integrality for splitting map} the plane $L^\perp$ also has discriminant~$D$ and so
\begin{align}\label{eq:covolume-shape}
\det(\pi_1(A_{1,L} P(k_L))) =   \det(\pi_2(A_{2,L} P(k_L))) = \pm\sqrt{D}.
\end{align}
By adapting $A_{1,L}$ and $A_{2,L}$ we may assume that these determinants are positive. 
We note that in case $D$ is not square-free the discriminant of the geometric CM points may be a divisor of $D$
instead of $D$ itself, we will explain this in detail in Proposition \ref{prop:Heegnerpointsfor lattices} below.

\ifsquarefree
Define as in the introduction the orthogonal lattices $\Lambda_{a_1(L)}$ and $\Lambda_{a_2(L)}$, which have discriminant~$D$ (cf.~\cite[p.~379]{AES3D}\menote{AES3D has a lot of pages} and recall for this that $a_1(L),a_2(L)$ are primitive as $D$ is square-free).
\else
Define as in the introduction the orthogonal lattices $\Lambda_{a_1(L)}$ and $\Lambda_{a_2(L)}$. If $D$ is square-free, $a_1(L)$ and $a_2(L)$ are primitive and the orthogonal lattices have discriminant $D$ (c.f. \cite[p.~379]{AES3D}).
Otherwise, the discriminant of these lattices are the squared lengths of primitive vectors in $\Q\, a_1(L)$ resp. $\Q\, a_2(L)$.
\fi
Let $A_{3,L} \in \SL_3(\Z)$ be a matrix whose last two rows are a basis of $\Lambda_{a_1(L)}$ and define $A_{4,L}$ analogously for $a_2(L)$. The above choices together with \eqref{eq:correctversion} yield
\begin{align*}
A_{3,L} P_1(k_L),\ A_{4,L} P_2(k_L) \in \set{\left(\begin{array}{c|cc}
\ast & \ast & \ast \\
\hline
0 & \ast & \ast \\ 
0 & \ast & \ast
\end{array}\right) }.
\end{align*}
Denoting by $\pi$ the projection onto the lower right block we obtain the shapes
\begin{align*}
[\Lambda_{a_1(L)}] &= \PGL_2(\Z)\, \pi(A_{3,L} P_1(k_L))\, \PO(2) \in \mathcal{X}_2, \\
[\Lambda_{a_2(L)}] &= \PGL_2(\Z)\, \pi(A_{4,L} P_2(k_L))\, \PO(2) \in \mathcal{X}_2,
\end{align*}
which are independent of the choices made.
These are the accidental CM points attached to $L$.
Note that an analogous construction defines the shape $[\Lambda_v]$ of an orthogonal lattice $\Lambda_v = v^\perp \cap \Z^3$ for any primitive integer point $v \in \Z^3$. 

\subsection{CM points and relationship to shapes}

The notions \textit{shapes of two-di\-men\-sional lattices}, \textit{binary quadratic forms} and \textit{CM-points} are related as we now explain.
Recall that $\SL_2$ acts on positive definite binary forms via
\begin{align*}
q(x,y) \mapsto g.q(x,y) = q((x,y)g)
\end{align*}
preserving the discriminant.
In particular, $\SL_2(\Z)$ acts on integral binary forms; we will denote by $[\cdot]$ the orbit equivalence classes for the latter action. 
For any equivalence class $[q]$ associated to a positive definite integral form $ q(x,y) = ax^2+bxy+cy^2$ we obtain a well-defined point (called a CM point)
\begin{align*}
\cm{z}_{[q]}
=\SL_2(\Z). \frac{-b + \sqrt{|b^2-4ac|}\ii}{2a} \in \lquot{\SL_2(\Z)}{\mathbb{H}^2} = Y_0(1)
\end{align*}
which doesn't depend on the choice of representative and which satisfies $\cm{z}_{[q]} = \cm{z}_{[\alpha q]}$ for any $\alpha \in \N$.

Given a rational plane $L$ the (potential) point $\cm{z}_{[Q|_{L(\Z)}]}$ is typically not well-defined as a point of $Y_0(1)$. 
Indeed, if $q(x,y) = ax^2+ bxy + cy^2$ is a representation of $Q|_{L(\Z)}$ in a basis, then so is $q'(x,y) = ax^2 - bxy + cy^2$ which is in the $\GL_2(\Z)$-orbit (when the action is extended) but not typically in the same $\SL_2(\Z)$-orbit.
The two points $\cm{z}_{[q]}$ and $\cm{z}_{[q']}$ are related by $\cm{z}_{[q']} = -\overline{\cm{z}_{[q]}}$ and one can check that the class of $\cm{z}_{[q]}$ in the two-to-one quotient
\begin{align*}
\mathcal{X}_2= \lrquot{\PGL_2(\Z)}{\PGL_2(\R)}{\PO(2)}
\end{align*}
of $Y_0(1)$ is independent of the choice of basis.
We view the CM point $\cm{z}_{[q]}$ for a class $[q]$ as an element of $\mathcal{X}_2$ whenever there is no ambiguity.

Using the basis of $L(\Z)$ contained in $A_{1,L}$ one can represent the quadratic form $Q|_{L(\Z)}$ by
\begin{align*}
q_L(x,y) = Q((x,y,0,0)A_{1,L}).
\end{align*}
Similarly, we represent the form $Q|_{L^\perp(\Z)}$ by
\begin{align*}
q_{L^\perp}(x,y) = Q((0,0,x,y)A_{2,L}).
\end{align*}
Both binary forms $q_L$ and $q_{L^\perp}$ have discriminant $D$ as $L^\perp\in \mathcal{R}_D$ by Proposition~\ref{prop:integrality for splitting map}.

\begin{proposition}[Geometric CM points]\label{prop:Heegnerpointsfor lattices}
Let $L\in \mathcal{R}_D$ for $D \in \mathbb{D}$.
Then 
\begin{align*}
\cm{z}_{[Q|_{L(\Z)}]} = \cm{z}_{[q_L]} = [L(\Z)]
\end{align*}
and analogously $\cm{z}_{[Q|_{L^\perp(\Z)}]} = \cm{z}_{[q_{L^\perp}]}= [L^\perp(\Z)]$.

If $D$ is square-free, either $Q|_{L(\Z)}$ is a primitive integral form or $\frac{1}{2}Q|_{L(\Z)}$ is a primitive integral form.
%
%
\end{proposition}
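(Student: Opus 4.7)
The plan is to unravel the definitions in three steps. First, the equality $\cm{z}_{[Q|_{L(\Z)}]}=\cm{z}_{[q_L]}$ is essentially tautological: by construction $q_L(x,y)=Q((x,y,0,0)A_{1,L})$ is the matrix representation of $Q|_{L(\Z)}$ with respect to the basis of $L(\Z)$ given by the first two rows of $A_{1,L}$, so $q_L$ and $Q|_{L(\Z)}$ lie in the same $\PGL_2(\Z)$-orbit of integral binary forms. The same reasoning applies to $q_{L^\perp}$ and $Q|_{L^\perp(\Z)}$.

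For the identification $\cm{z}_{[q_L]}=[L(\Z)]$, I would compute both sides explicitly. Set $M=\pi_1(A_{1,L}P(k_L))\in \GL_2(\R)$ and let $w_i=(m_{i1},m_{i2})$ for $i=1,2$ denote its rows, which form a basis of the lattice $k_L^{-1}.L(\Z)$ in $\R^2\times\{(0,0)\}$. Since $k_L$ is an isometry, the coefficients of $q_L(x,y)=ax^2+bxy+cy^2$ are $a=|w_1|^2$, $b=2\langle w_1,w_2\rangle$ and $c=|w_2|^2$, while $\det M=\sqrt{ac-b^2/4}=\sqrt D$ by \eqref{eq:covolume-shape} and Lemma~\ref{lemma:formula lengths}. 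Identifying $w_j\in\R^2$ with $w_j=m_{j1}+i m_{j2}\in\C$, the shape $[L(\Z)]=\PGL_2(\Z)M\PO(2)$ corresponds under $\PGL_2(\R)/\PO(2)\cong\mathbb{H}^2$ to the point $M\cdot i=w_2/w_1$, and a short computation gives
\[
\frac{w_2}{w_1}=\frac{b/2+i\sqrt{D}}{a},\qquad \cm{z}_{[q_L]}=\PGL_2(\Z)\cdot\frac{-b/2+i\sqrt{D}}{a}.
\]
These two points in $\mathbb{H}^2$ differ by the reflection $z\mapsto -\overline{z}$, which corresponds to $\operatorname{diag}(-1,1)\in\PGL_2(\Z)$ (or equivalently, is absorbed in the double quotient defining $\mathcal{X}_2$). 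Thus they coincide in $\mathcal{X}_2$. The analogous computation with $A_{2,L}$ and $P(k_L)$ gives the corresponding statement for $L^\perp$.

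For the primitivity claim in the square-free case, suppose $p$ is an odd prime with $p\mid\gcd(a,b,c)$. Since $b=2\langle v_1,v_2\rangle$ is even, $b/2\in\Z$, and $p\mid a,\ p\mid b/2,\ p\mid c$ gives $p^2\mid ac-(b/2)^2=D$, contradicting square-freeness. Similarly, if $4\mid\gcd(a,b,c)$ then $4\mid a,\ 2\mid b/2,\ 4\mid c$ and we obtain $4\mid ac-(b/2)^2=D$, again contradicting square-freeness. Hence $\gcd(a,b,c)\in\{1,2\}$, proving that either $Q|_{L(\Z)}$ or $\tfrac{1}{2}Q|_{L(\Z)}$ is a primitive integral binary form.

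The only step with any real content is the coordinate computation in the second paragraph, and the main conceptual point is to see that the discrepancy between the sign of $b$ appearing in the CM-point formula and the matrix construction of the shape is precisely the orientation-reversing reflection that was collapsed in passing from the modular surface to $\mathcal{X}_2$; once this is noticed, everything else is bookkeeping.
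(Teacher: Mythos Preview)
Your argument is correct and follows essentially the same route as the paper, which also reduces to an explicit coordinate computation (conjugating $M$ by $\left(\begin{smallmatrix}0&1\\-1&0\end{smallmatrix}\right)\in\SO_2(\Z)$ before applying the M\"obius action so as to land directly on $\frac{-b/2+i\sqrt{D}}{a}$). One tiny slip: as points of $\mathbb{H}^2$ the M\"obius image $M\cdot i$ equals $\frac{b/2+i\sqrt{D}}{c}$ rather than $w_2/w_1=\frac{b/2+i\sqrt{D}}{a}$, but both represent the same class in $\mathcal{X}_2$, so your computation via $w_2/w_1$ and the reflection $z\mapsto -\bar z$ remains valid.
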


The situation for non-square-free discriminants will be analyzed later (see Proposition \ref{prop:geometric-nonsqfree}) using local considerations.

\begin{proof}
Let us first prove the statement in the proposition concerning the CM points $\cm{z}_{[Q|_{L(\Z)}]}$, $\cm{z}_{[Q|_{L^\perp(\Z)}]}$. For this we apply a similar argument as in \cite[p.~391-392]{AES3D}.
Write $v_1,v_2$ for the first two rows of $A_{1,L}$ and note that
\begin{align*}
q_L(x,y) = ax^2 + b xy + c y^2
\end{align*}
for $a=Q(v_1)$, $b = 2(v_1,v_2)$ and $c = Q(v_2)$ where $(\cdot,\cdot)$ denotes the (standard) bilinear form induced by $Q$.
Furthermore, set
$\pi_1(A_{1,L} P(k_L))=\left(\begin{smallmatrix}
a_{11} & a_{12}\\
a_{21} & a_{22}
\end{smallmatrix}\right)$. 
By definition, we have 
$a_{11}^2 + a_{12}^2 = Q(v_1)=a$, 
$a_{21}^2 + a_{22}^2 = Q(v_2)=c$ 
and $a_{11}a_{21} + a_{12}a_{22}= (v_1,v_2)= \frac{b}{2}$. 
As $\sqrt{D}$ is the covolume of $L(\Z)$ (see \eqref{eq:covolume-shape}), we further note that
\begin{align*}
a_{11}a_{22}-a_{12}a_{21} = \det(\pi_1(A_{1,L} P(k_L))= \sqrt{D} = \sqrt{ac-\tfrac{1}{4}b^2}.
\end{align*}
To compute a representative of $\pi_1(A_{1,L} P(k_L)).\ii$ in $\mathcal{X}_2$ we may also conjugate $\left(\begin{smallmatrix}
a_{11} & a_{12}\\
a_{21} & a_{22}
\end{smallmatrix}\right)$ with $\left(\begin{smallmatrix}
0 & 1\\
-1 & 0
\end{smallmatrix}\right) \in \SO_2(\Z)$ to obtain
\begin{align*}
\begin{pmatrix}
a_{22} & -a_{21}\\
-a_{12} & a_{11}
\end{pmatrix}.\ii
&= \frac{a_{22}\ii - a_{21}}{-a_{12}\ii+a_{11}}
= \frac{(-a_{12}a_{22}-a_{11}a_{21})+ \ii (-a_{12}a_{21}+a_{11}a_{22})}{a} \\
&= \frac{-\tfrac{b}{2} + \ii \sqrt{D}}{a} = \frac{-b+\ii\sqrt{4D}}{2a}
\end{align*}
For the equality $\cm{z}_{[Q|_{L^\perp(\Z)}]} = [L^\perp(\Z)]$ we note that the discriminant of $L^\perp(\Z)$ is also~$D$ by Proposition \ref{prop:integrality for splitting map} so that the above proof applies.

For the second part of the proposition assume that $D$ is square-free.
We recall that by definition $ac-\frac{b^2}{4} = D$.
This implies that $p\nmid \gcd(a,b,c)$ for any odd prime~$p$ (as otherwise $p^2\mid D$) and $4\nmid\gcd(a,b,c)$ as claimed.
%
\end{proof}

\subsection{Local analysis at odd primes}\label{section:local for forms}

Here, we generalize the second part of Proposition~\ref{prop:Heegnerpointsfor lattices} addressing primitivity issues of $Q|_{L(\Z)}$ for $L \in \mathcal{R}_D$ to non-square-free discriminants $D$.

If $D$ is not square-free, the form $Q|_{L(\Z)}$ might be non-primitive and the discriminant\footnote{By definition of the discriminant in Section \ref{section:associated intpts}, the discriminant of a binary quadratic form $ax^2+bxy +cy^2$ is given by $ac-\frac{b^2}{4}$.} of the primitive forms $\widetilde{q}_L$ might be much smaller than $D$.
In the proposition to follow we will compute the discriminant of the form $\widetilde{q}_L$.

Given a binary integral form $q$ and a prime $p$ we denote by $\ord_p(q)$ the largest integer $k$ for which $p^{-k}q$ is integral.
Similarly, given a vector $v \in \Z^3$ we let $\ord_p(v)$ for a prime $p$ be the largest integer $k$ with $p^{-k}v \in \Z^3$. We also set $\widetilde{v}$ to be the primitive integer vector in the half-line $\Q_{+} v$.

\begin{proposition}[Geometric CM points and non-square-free discriminants]\label{prop:geometric-nonsqfree}
Let $L\in \mathcal{R}_D$ for $D \in \mathbb{D}$. Then
\begin{align*}
\ord_p(Q|_{L(\Z)}) = \ord_p(q_L) = \max\big\lbrace\ord_p(a_1(L)),\ord_p(a_2(L))\big\rbrace
\end{align*}
for any odd prime $p$ and $\ord_2(Q|_{L(\Z)})\leq 4$.
Furthermore, the discriminant of the primitive form $\widetilde{q}_L$ (or $\widetilde{q}_{L^\perp}$) satisfies
\begin{align}\label{eq:formula discriminant CM}
\disc(\widetilde{q}_L) \asymp \frac{Q(\widetilde{a}_1(L)) Q(\widetilde{a}_2(L))}{D}.
\end{align}
\end{proposition}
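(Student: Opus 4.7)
The plan is to carry out a local analysis at each prime and then combine. The first equality $\ord_p(Q|_{L(\Z)}) = \ord_p(q_L)$ is immediate, since $q_L$ represents $Q|_{L(\Z)}$ in the $\Z$-basis of $L(\Z)$ implicit in $A_{1,L}$ and the content of a binary quadratic form is a basis invariant of the quadratic lattice.

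For the second equality at an odd prime $p$, I pass to $\Z_p$: the Hamiltonian quaternions split at all odd primes, so $\quat(\Z_p) \cong \Mat_2(\Z_p)$ as maximal orders, with reduced norm equal to determinant. Diagonalize $q_L$ over $\Z_p$ by choosing a $\Z_p$-basis $v_1', v_2'$ of $L(\Z_p)$ with $(v_1', v_2') = 0$ and $Q(v_i') = p^{e_i}u_i$ for $u_i \in \Z_p^\times$ and $e_1 \leq e_2$, so that $\ord_p(q_L) = e_1$. By the bilinearity and antisymmetry noted in \eqref{eq:bilin&antisymm}, the expressions defining $a_1, a_2$ transform by the determinant of the basis change, so $\ord_p(a_i(L))$ is invariant and may be computed in this basis; moreover the basis property forces $\ord_p(v_i') = 0$, since otherwise $v_i'/p$ would lie in $L(\Z_p)$. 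For the upper bound $\ord_p(a_i) \leq e_1$, if $\ord_p(v_1'\overline{v_2'}) \geq k$ with $k > e_1$, then
\begin{equation*}
\overline{v_1'}(v_1'\overline{v_2'}) \;=\; p^{e_1}u_1\overline{v_2'} \;\in\; p^k\Mat_2(\Z_p),
\end{equation*}
forcing $\ord_p(\overline{v_2'}) \geq k - e_1 \geq 1$ and contradicting $\ord_p(v_2') = 0$; the estimate for $\overline{v_2'}v_1'$ is symmetric. For the lower bound $\max\{\ord_p(a_1), \ord_p(a_2)\} \geq e_1$, note that when $e_1 \geq 1$ the reduction $L(\Z_p)/pL(\Z_p) \subset \Mat_2(\Fp)$ consists of matrices of vanishing determinant and so is a two-dimensional subspace of the rank-$\leq 1$ locus, which must share either a common column span or a common row span. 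A direct calculation with the factorizations $v_i' \bmod p = c_i w_i^T$ shows that in the first case $\overline{v_2'}v_1' \equiv 0 \bmod p$ while $v_1'\overline{v_2'} \not\equiv 0 \bmod p$, and conversely in the second case, yielding $\max \geq 1$. An induction on $e_1$ --- dividing the vanishing product by $p$ and re-examining the structure of the rescaled lattice at the next level --- upgrades this to $\max \geq e_1$. The $p = 2$ bound is elementary: $D \in \mathbb{D}$ excludes $D \equiv 0 \bmod 16$, so $\ord_2(D) \leq 3$, and the content $m$ of $q_L$ satisfies $m^2 \mid 4D$, giving $\ord_2(m) \leq 2 \leq 4$.

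For the asymptotic formula, let $n_i$ denote the content of $a_i(L)$, so that $\disc(\widetilde{q}_L) = D/m^2$ and $Q(\widetilde{a}_i(L)) = D/n_i^2$. Pair-primitivity (Lemma~\ref{lemma:pairwise primitivity}) yields $\min(\ord_p(n_1), \ord_p(n_2)) = 0$ for odd $p$, so the identity $\ord_p(m) = \max(\ord_p(n_1), \ord_p(n_2))$ just established becomes $\ord_p(m) = \ord_p(n_1) + \ord_p(n_2) = \ord_p(n_1 n_2)$ at every odd $p$, while $\ord_2(m) - \ord_2(n_1 n_2) = O(1)$. Hence $m \asymp n_1 n_2$, and
\begin{equation*}
\disc(\widetilde{q}_L) \;=\; D/m^2 \;\asymp\; D/(n_1 n_2)^2 \;=\; Q(\widetilde{a}_1(L))\,Q(\widetilde{a}_2(L))/D,
\end{equation*}
as required. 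The main technical obstacle is the induction in the lower bound: one must show that the common column/row-span structure seen at level $p$ persists appropriately in higher $p$-adic quotients after rescaling by the vanishing factor.
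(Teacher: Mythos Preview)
Your route differs from the paper's. Rather than analyzing an arbitrary orthogonal $\Z_p$-basis, the paper first puts $L(\Z_p)$ into a normal form (Lemma~\ref{lemma:loc.conjugacy of planes}): using equivariance of the Klein map and the classification of $\SL_2(\Z_p)$-orbits on primitive vectors in $\quat_0(\Z_p)$ of given norm, one finds $g\in\SU_2^2(\Z_p)$ with $g.L(\Z_p)=\Z_p\bigl(\begin{smallmatrix}\alpha_1&0\\0&\alpha_2\end{smallmatrix}\bigr)\oplus\Z_p\bigl(\begin{smallmatrix}0&1\\-D/(\alpha_1\alpha_2)&0\end{smallmatrix}\bigr)$ and $\ord_p(\alpha_i)=\ord_p(a_i(L))$. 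In this basis the restricted form is $\alpha_1\alpha_2\,x^2+\tfrac{D}{\alpha_1\alpha_2}\,y^2$; since $p^{2k}\mid D$ for $k=\ord_p(\alpha_1\alpha_2)=\max\{\ord_p a_1,\ord_p a_2\}$ (pair-primitivity), the identity $\ord_p(q_L)=k$ is read off in one line. The same normal form is later reused for the glue-group computation (Proposition~\ref{prop:isotype at odd primes}), which is an extra payoff of the paper's approach.

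Your hands-on argument is correct for the upper bound and for the base case $\max\geq 1$ of the lower bound, but the induction you flag is a genuine gap as written. It can be closed with one additional observation. After left-multiplying by $U\in\GL_2(\Z_p)$ so that both $v_i''=Uv_i'$ have second row in $p\Z_p^2$, set $\tilde v_i=\bigl(\begin{smallmatrix}1&0\\0&p^{-1}\end{smallmatrix}\bigr)v_i''$. One checks that $\tilde v_1,\tilde v_2$ are again orthogonal, primitive, and linearly independent mod $p$, with $\ord_p\det\tilde v_i=e_i-1$ and $\overline{\tilde v_2}\,\tilde v_1$ a unit multiple of $p^{-1}a_2$. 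The key point making the induction go through is that the \emph{first} rows of $v_1'',v_2''$ are exactly your $w_1,w_2$, which are linearly independent mod $p$; this rules out a common \emph{row} span for $\tilde v_1,\tilde v_2$ mod $p$, so when $e_1-1\geq 1$ the structure remains of column type and the divisibility keeps accruing to $a_2$ rather than switching to $a_1$. Without establishing this persistence your induction is incomplete.
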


The proposition essentially says that the quadratic form $Q|_{L(\Z)}$ inherits the ``arithmetic complexity'' from the integer points of the plane $L$. More precisely, if one of the vectors $a_1(L)$ and $a_2(L)$ is ``very non-primitive'' then the same will hold for the form.
This fact is also reflected in Lemma \ref{lemma:pointwisestab}.
We also note that the statement at the prime $2$ could be sharpened.
This however is not needed for the proof of \eqref{eq:formula discriminant CM}.

Let $p$ be a fixed odd prime (the statement for the prime $2$ will be a simple consequence of the congruence condition $D \in \BD$).
Then the quaternion algebra $\quat$ is split over $\Q_p$ as the norm form is isotropic \cite[Prop.~7.6.2]{voight} so that we have $\quat(\Q_p) \cong \Mat_2(\Q_p)$ and in fact $\quat(\Z_p) \cong \Mat_2(\Z_p)$.
Under this fixed isomorphism, conjugation on $\quat(\Qp)$ corresponds to the adjunct on $\Mat_2(\Qp)$, the (reduced) trace to the usual trace on matrices and the (reduced) norm to the determinant.
Now note that the Klein map (see Proposition \ref{prop:splitting map}) was defined using only these operations and can thus be defined for two-dimensional subspaces of $\Mat_2$ as well\footnote{In fact, the Klein map makes sense for two-dimensional subspaces of any quaternion algebra. In principle, the arguments of this paper carry over to yield a more general statement about such planes and the induced shapes (for the norm form) -- see also Remark~\ref{rem:extensions}.}.
We will therefore freely identify the resulting Klein map for $\Mat_2(\Qp)$ with the Klein map for $\quat(\Qp)$.

Let $\mathcal{R}_{D,p}$ be the set of two-dimensional subspaces $L$ in $\Q_p^4$ with\footnote{As for subspaces $L \subset \Q^4$ we define the discriminant of any $L \in \Gr(\Q_p)$ as the discriminant of $Q|_{L(\Z_p)}$ where $L(\Z_p) = L \cap \Z_p^4$. For $\Z_p$-integral forms the discriminant is defined up to multiples in $(\Z_p^\times)^2$.}
 $\disc(L) = D (\Z_p^\times)^2$ and note that $\SU_2^2(\Z_p)$ acts on $\mathcal{R}_{D,p}$ (preserving in fact also $\Z_p$-equivalence class of the form $Q|_{L(\Z_p)}$ for any $L \in \mathcal{R}_{D,p}$).
Observe also that the Klein map using an integral basis associates to any $L \in \mathcal{R}_{D,p}$ a pair of vectors $(a_1(L),a_2(L))\in \Z_p^3 \times \Z_p^3$ with $Q(a_1(L)) = Q(a_2(L)) \in D (\Z_p^\times)^2$, which is well-defined up to simultaneous multiples in $\Z_p^\times$. 

\begin{lemma}\label{lemma:loc.conjugacy of planes}
Let $L \in \mathcal{R}_{D,p}$.
Then there is some $g \in \SU_2^2(\Z_p)$ such that
\begin{align}\label{eq: local lattices}
g.L(\Z_p) = \Z_p \begin{pmatrix}
\alpha_1 & 0 \\ 
0 & \alpha_2
\end{pmatrix} \oplus \Z_p \begin{pmatrix}
0 & 1 \\ 
-\frac{D}{\alpha_1\alpha_2} & 0
\end{pmatrix}
\end{align}
for some $\alpha_1,\alpha_2\in \Zp\setminus\set{0}$ with $\ord_p(\alpha_1) = \ord_p(a_1(L))$ and $\ord_p(\alpha_2) = \ord_p(a_2(L))$.
\end{lemma}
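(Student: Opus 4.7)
The plan is to exploit the splitting $\quat(\Q_p) \cong \Mat_2(\Q_p)$ (valid for odd $p$), under which $\SU_2^2(\Z_p) \cong \SL_2(\Z_p)\times\SL_2(\Z_p)$ acts via $(g,h).x = gxh^{-1}$ and the Klein pair $(a_1,a_2) := (a_1(L), a_2(L))$ consists of traceless matrices in $\Mat_2(\Z_p)$ with $\det a_i = Q(a_i) = \disc(L)$. First I observe pair-primitivity: since $L(\Z_p) = L \cap \Z_p^4$ is saturated, for any $\Z_p$-basis $(v_1,v_2)$ the wedge $v_1\wedge v_2$ is primitive in $\bigwedge^2\Z_p^4$, and the explicit formula from the proof of Lemma~\ref{lemma:pairwise primitivity} forces $\min(\ord_p(a_1), \ord_p(a_2))=0$. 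Moreover, as $\disc(L)\in D(\Z_p^\times)^2$, a $\GL_2(\Z_p)$ change of basis makes $Q(a_i)=D$ exactly. Set $k_i := \ord_p(a_i)$; then $p^{2k_i}\mid D$.

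Next I would normalize each $a_i$ independently by $\SL_2(\Z_p)$-conjugation from the $i$-th factor. The key sub-lemma is: a primitive traceless $X = \begin{pmatrix}a & b \\ c & -a\end{pmatrix} \in \Mat_2(\Z_p)$ with $\det X = d$ (and hence $X^2 = -d\cdot I$ by Cayley--Hamilton) admits $h \in \SL_2(\Z_p)$ with $hXh^{-1} = \begin{pmatrix}0 & -\beta \\ d/\beta & 0\end{pmatrix}$ for some $\beta \in \Z_p^\times$. To prove it, the binary form $Q_X(x,y) := -cx^2 + 2axy + by^2$ has discriminant $-4d$ and is non-zero mod $p$ (as $X$ is primitive), so one can pick primitive $(x_0,y_0)\in\Z_p^2$ with $u := Q_X(x_0,y_0)\in \Z_p^\times$. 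Then $w_2 := (x_0, y_0)^t$ and $w_1 := X w_2$ form a $\Z_p$-basis of $\Z_p^2$ of determinant $u$; rescaling $w_1$ by $u^{-1}$ yields the required $h$, with $\beta = -u$. Applying this with $X = p^{-k_i} a_i$ and $d = D/p^{2k_i}$, and then multiplying back by $p^{k_i}$, produces $g_i \in \SL_2(\Z_p)$ with $g_i a_i g_i^{-1} = \begin{pmatrix}0 & -\alpha_i\\ D/\alpha_i & 0\end{pmatrix}$ and $\ord_p(\alpha_i)=k_i$.

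It remains to identify $g.L(\Z_p)$ for $g=(g_1,g_2)$. Using $\Phi^{-1}([a_1,a_2]) = \{x : a_1 x = xa_2\}$ from Proposition~\ref{prop:splitting map}, a short calculation parametrizes
\[
g.L(\Q_p) = \Bigl\{ \begin{pmatrix} a & -c\alpha_1\alpha_2/D \\ c & a\alpha_2/\alpha_1\end{pmatrix} : a,c \in \Q_p\Bigr\}.
\]
Since $\SU_2^2(\Z_p)$ preserves $\Mat_2(\Z_p)$, the lattice $g.L(\Z_p)$ consists of those parametrized matrices with all four entries in $\Z_p$. By pair-primitivity we may assume $k_1=0$; then $\alpha_1\in\Z_p^\times$, the constraint on $a$ reduces to $a\in \Z_p$, and the constraint on $c$ becomes $c \in p^{\ord_p(D)-k_2}\Z_p$. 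These are precisely $\Z_p$-generated by the parameters $(a,c)=(\alpha_1,0)$ and $(a,c)=(0,-D/(\alpha_1\alpha_2))$, which correspond to the two matrices in the statement of the lemma.

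The main subtlety is that the Klein pair only determines the $\Q_p$-plane, not the $\Z_p$-lattice inside it: different lattices in the same plane produce Klein pairs differing by a non-uniform scaling, so one needs to verify that after Klein normalization the \emph{canonical} integrality $g.L\cap\Mat_2(\Z_p)$ is actually generated by the two specific matrices claimed (and not a proper sublattice). Pair-primitivity, which is automatic from saturation of $L(\Z_p)$, is exactly the ingredient that makes this matching work.
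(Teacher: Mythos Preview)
Your proof is correct and follows essentially the same strategy as the paper: normalize each $a_i$ to an antidiagonal matrix via $\SL_2(\Z_p)$-conjugation, then compute the resulting plane using the inverse Klein map. The only differences are that you prove the normalization sub-lemma directly (via the binary form $Q_X$) where the paper cites \cite[Prop.~3.7]{Linnikthmexpander}, and you spell out the final lattice computation that the paper leaves as ``a direct computation''; both make the argument more self-contained without changing the approach.
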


\begin{proof}
Choose a basis of $L$ for which $Q(a_1(L)) = Q(a_2(L)) = D$.
We may assume without loss of generality that $\ord_p(a_1(L)) = k= \max\set{\ord_p(a_1(L)),\ord_p(a_2(L))}$ which implies $\ord_p(a_2(L)) = 0$ by pair-primitivity.
(Otherwise, one can replace $L$ by $\overline{L}$ which interchanges $a_1(L)$ and $a_2(L)$.)

In order to obtain a plane of the desired form we use equivariance of the Klein map.
Note that the action of $\SU_2(\Z_p)$ on the set of primitive vectors $v \in \quat_0(\Z_p)$ with $N(v) = d$ for some fixed non-zero $d \in \Z_p$ has at most two orbits which can be represented by
\begin{align}\label{eq:intptsconjugacy over Zp}
b_\lambda(d,p) = \begin{pmatrix}
0 & \lambda\\ -d\lambda^{-1} & 0
\end{pmatrix}
\end{align}
Here, $\lambda\in \set{1,\varepsilon_p}$ where $\varepsilon_p$ a non-square in $\Z_p^\times$.
In fact, one shows that  $\GL_2(\Z_p)$ acts transitively on the above set of vectors, and the matrices $b_1(d,p)$ and $b_{\varepsilon_p}(d,p)$ are not conjugate by a matrix in $\SL_2(\Z_p)$ if and only if $p$ divides $d$ (see also the proof of \cite[Prop.~3.7]{Linnikthmexpander}).

Applying this to $p^{-k}a_1(L)$ and $a_2(L)$ to find $g\in \SU_2^2(\Z_p)$ such that 
\begin{align}\label{eq: two vectors}
g_1.a_1(L) = p^kb_{\lambda_1}(D/p^{2k},p)\text{ and } g_2.a_2(L) = b_{\lambda_2}(D,p)
\end{align}
for some $\lambda_1,\lambda_2 \in \set{1,\varepsilon_p}$.
A direct computation using either the Klein map on \eqref{eq: local lattices} or its inverse on \eqref{eq: two vectors} shows that $g.L$ is of the form desired in the lemma where we set $\alpha_1 = -\lambda_1 p^k$ and $\alpha_2 = -\lambda_2$.
\end{proof}

\begin{proof}[Proof of Proposition \ref{prop:geometric-nonsqfree}]
Let $p$ be an odd prime and let $L \in \mathcal{R}_D \subset \mathcal{R}_{D,p}$.
Furthermore, let $g,\alpha_1,\alpha_2$ be chosen as in Lemma \ref{lemma:loc.conjugacy of planes} for $L$ and set
\begin{align*}
k= \max\set{\ord_p(a_1(L)),\ord_p(a_2(L))} = \ord_p(\alpha_1\alpha_2).
\end{align*}
Clearly, $Q|_{L(\Z_p)}$ is $\Z_p$-equivalent to $Q|_{g.L(\Z_p)}$.
In the orthogonal basis for $g.L(\Z_p)$ of Lemma \ref{lemma:loc.conjugacy of planes} the form $Q|_{g.L(\Z_p)}$ is represented by $q(x,y)=\alpha_1\alpha_2 x^2+ \frac{D}{\alpha_1\alpha_2} y^2$.
Since $p^{2k}\mid D$,  we obtain $k=\ord_p(q) = \ord_p(Q|_{L(\Z_p)})$ as claimed.

For the statement at the prime $2$ note that for any $L \in \mathcal{R}_D$ we have $$\ord_2(Q|_{L(\Z)}) \leq \ord_2(D)+1 \leq 4$$ as $D \in \BD$.

To prove Equation \eqref{eq:formula discriminant CM} note that by pair-primitivity
\begin{align*}
\disc(\widetilde{q}_L) 
&\asymp \frac{D}{\prod_{p \text{ odd}} p^{2\ord_p(a_1(L))+2\ord_p(a_2(L)) }} \\
&= \frac{1}{D} \frac{Q(a_1(L))}{\prod_{p \text{ odd}} p^{2\ord_p(a_1(L))}}
\frac{Q(a_2(L))}{\prod_{p \text{ odd}} p^{2\ord_p(a_2(L))}}
\asymp \frac{Q(\widetilde{a}_1(L)) Q(\widetilde{a}_2(L))}{D}
\end{align*}
as claimed (where $\asymp$ is used in order to ignore the prime $2$).
\end{proof}

\subsection{Accidental CM points}

As for the geometric CM points, we use the basis of $\Lambda_{a_1(L)}$ contained in $A_{3,L}$ to represent $Q|_{\Lambda_{a_1(L)}}$ via the binary form $q_{a_1(L)}(x,y) = Q|_{\quat_0}((0,x,y)A_{3,L})$.
The form $q_{a_2(L)}$ is defined analogously.
As mentioned, in \cite[p.~391-392]{AES3D} and \cite[Lemma~3.3]{AEShigherdim} (in the non-square-free case) a discussion similar to Proposition \ref{prop:Heegnerpointsfor lattices} was carried out for the shapes $[\Lambda_{a_1(\cdot)}]$ and $[\Lambda_{a_2(\cdot)}]$, which we summarize in the following lemma.

\ifsquarefree
\begin{lemma}[Accidental CM points]\label{lemma:accidental CM}
Let $v \in \Z^3$ with $Q(v) = D$ square-free. If $D \equiv 1,2 \mod 4$, the quadratic form $Q|_{\Lambda_v}$ is primitive. If $D \equiv 3 \mod 4$,  the quadratic form $\tfrac{1}{2}Q|_{\Lambda_v}$ is integral and primitive. Furthermore,
\begin{align*}
\cm{z}_{[Q|_{\Lambda_v}]} = [\Lambda_v].
\end{align*}
\end{lemma}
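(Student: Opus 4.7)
The plan is to mirror the proof of Proposition \ref{prop:Heegnerpointsfor lattices}. Since $Q(v)=D$ is square-free, $v$ must be primitive, and the standard formula for orthogonal lattices gives $\disc(\Lambda_v)=Q(v)=D$. Pick a basis $w_1,w_2$ of $\Lambda_v$ and write
\begin{align*}
q(x,y) := Q|_{\Lambda_v}(xw_1+yw_2) = ax^2+bxy+cy^2
\end{align*}
with $a=Q(w_1)$, $b=2(w_1\cdot w_2)$, $c=Q(w_2)$, so that $\disc(q)=ac-\tfrac{b^2}{4}=D$. Setting $g := \gcd(a,b,c)$ and using $g\mid b$ gives $g^2\mid 4ac-b^2=4D$, and square-freeness of $D$ forces $g\in\{1,2\}$.

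To distinguish the two cases I would argue modulo $2$. For every $w\in\Z^3$ one has $Q(w)\equiv w\cdot(1,1,1) \pmod{2}$, so $Q|_{\Lambda_v}$ takes only even values if and only if $(1,1,1)$ lies in $(\Lambda_v\bmod 2)^\perp$ inside $\BF_2^3$. Since $v\not\equiv 0\pmod{2}$ (as $D\not\equiv 0\pmod{4}$ by square-freeness), a rank count shows $\Lambda_v\bmod 2 = v^\perp\subset\BF_2^3$, whence $(\Lambda_v\bmod 2)^\perp = \BF_2 v$; the previous condition becomes $v\equiv(1,1,1)\pmod{2}$, equivalently $D\equiv 3\pmod{8}$. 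When $D\equiv 1,2\pmod{4}$, this cannot hold, so $g=1$ and $Q|_{\Lambda_v}$ is primitive. When $D\equiv 3\pmod{4}$, Legendre's theorem combined with square-freeness excludes $D\equiv 7\pmod{8}$, forcing $D\equiv 3\pmod{8}$; hence $g=2$ and $\tfrac{1}{2}Q|_{\Lambda_v}$ is integral and primitive.

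For the identification $\cm{z}_{[Q|_{\Lambda_v}]}=[\Lambda_v]$ I would repeat almost verbatim the Möbius computation from the proof of Proposition~\ref{prop:Heegnerpointsfor lattices}: write $M:=\pi(A_{3,L}P_1(k_L))$ in the case $v=a_1(L)$ (the case $v=a_2(L)$ is identical, and an arbitrary primitive $v$ can be brought to this form after applying an element of $\SO_3(\Z)$). By the orthogonality of $P_1(k_L)$ and the choice of $A_{3,L}$, the entries $(m_{ij})$ of $M$ satisfy $m_{11}^2+m_{12}^2=a$, $m_{21}^2+m_{22}^2=c$, $m_{11}m_{21}+m_{12}m_{22}=\tfrac{b}{2}$, and $\det M=\sqrt{D}$ (the last because $\sqrt{D}$ is the covolume of $\Lambda_v$ for $v$ primitive of length $\sqrt{D}$). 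Conjugating $M$ by $\left(\begin{smallmatrix} 0 & 1\\-1 & 0\end{smallmatrix}\right)\in\SO_2(\Z)$ and applying the result to $\ii\in\mathbb{H}^2$ produces $\tfrac{-b+\ii\sqrt{4D}}{2a}$, which is by definition $\cm{z}_{[q]}$.

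The main obstacle is the parity analysis that determines $g$: one must correctly identify $\Lambda_v\bmod 2$ with $v^\perp\subset\BF_2^3$ and then use Legendre's theorem to exclude the residue $D\equiv 7\pmod{8}$ in the case $D\equiv 3\pmod{4}$. Everything else reduces to linear-algebraic bookkeeping and a direct transfer of the shape-to-CM-point computation already performed for the geometric CM points.
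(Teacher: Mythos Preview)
Your argument is correct. The paper does not actually prove this lemma; it simply summarizes results from \cite[p.~391--392]{AES3D} and \cite[Lemma~3.3]{AEShigherdim}, so there is no in-paper proof to compare against. Your self-contained approach---bounding $\gcd(a,b,c)\in\{1,2\}$ via $g^2\mid 4D$, then resolving the dichotomy by the mod-$2$ identity $Q(w)\equiv w\cdot(1,1,1)$ and the identification $\Lambda_v\bmod 2=(v\bmod 2)^\perp\subset\BF_2^3$, followed by the M\"obius computation transplanted from Proposition~\ref{prop:Heegnerpointsfor lattices}---is exactly in the spirit of those references and is complete as written.

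One cosmetic remark: the detour through ``$v=a_1(L)$ and reduce the general case by $\SO_3(\Z)$'' is unnecessary. The shape $[\Lambda_v]$ is defined directly for any primitive $v$ by choosing $A\in\SL_3(\Z)$ with last two rows a basis of $\Lambda_v$ and $k\in\SO_3(\R)$ rotating $v$ to $\pm\sqrt{D}\,\ii$; your M\"obius computation goes through verbatim in that generality without invoking any plane $L$.
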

\else
\begin{lemma}[Accidental CM points]\label{lemma:accidental CM}
Let $v \in \Z^3$ and set $d = Q(\widetilde{v})$.
Then we have $\cm{z}_{[Q|_{\Lambda_v}]} = [\Lambda_v] = [\Lambda_{\widetilde{v}}]$.

If $d \equiv 1,2 \mod 4$, the quadratic form $Q|_{\Lambda_v}$ is primitive. 
If $d \equiv 3 \mod 4$,  the quadratic form $\tfrac{1}{2}Q|_{\Lambda_v}$ is integral and primitive.
Furthermore, $\disc(\widetilde{q}_v) \asymp Q(\widetilde{v})$.
\end{lemma}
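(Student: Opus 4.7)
The plan is to reduce to primitive $v$ and then mirror the structure of the proof of Proposition~\ref{prop:Heegnerpointsfor lattices} in three dimensions. The equality $\Lambda_v = \Lambda_{\widetilde{v}}$ is immediate since $v$ and $\widetilde{v}$ span the same line in $\R^3$, so in particular $[\Lambda_v] = [\Lambda_{\widetilde{v}}]$ and I may assume that $v = \widetilde{v}$ is primitive with $Q(v) = d$.

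To identify $\cm{z}_{[Q|_{\Lambda_v}]}$ with $[\Lambda_v]$ I would pick a $\Z$-basis $w_1, w_2$ of $\Lambda_v$ (placed as the last two rows of an integer matrix $A_{3,L} \in \SL_3(\Z)$) and a rotation $k \in \SO_3(\R)$ sending $v/\sqrt{d}$ to $\ii$, so that $w_1, w_2$ are rotated into the plane $\{0\} \times \R^2$. Writing $Q|_{\Lambda_v}$ in the basis $w_1, w_2$ as $q_v(x,y) = ax^2 + bxy + cy^2$ with $a = Q(w_1)$, $b = 2(w_1, w_2)$, $c = Q(w_2)$, and using that the covolume of $\Lambda_v$ in $v^\perp$ equals $\sqrt{d} = \sqrt{ac - b^2/4}$, the lower right $2\times 2$ block of the rotated matrix $A_{3,L}P_1(k)$ has determinant $\sqrt{d}$. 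The same manipulation as in the proof of Proposition~\ref{prop:Heegnerpointsfor lattices} then identifies $[\Lambda_v]$ with $\cm{z}_{[q_v]} = \cm{z}_{[Q|_{\Lambda_v}]}$.

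The primitivity statements I would establish locally. At any odd prime $p$, I would use the $\SO_3(\Z_p)$-action on primitive traceless integer quaternions of norm $d$ (with at most two orbits of representatives as in~\eqref{eq:intptsconjugacy over Zp}) to conjugate $v$ to a standard form $b_\lambda(d,p)$; in this form one can exhibit an explicit $\Z_p$-basis of $\Lambda_v$ and read off that the content of $q_v$ is a unit at $p$. At the prime $2$, I would argue via the algebraic identity $ac - (b/2)^2 = d$ with $a, c, b/2 \in \Z$: a case analysis on the parities of $a, b/2, c$ shows that $d \equiv 1, 2 \mod 4$ forces $\gcd(a,b,c)$ to be odd, while $d \equiv 3 \mod 4$ forces $a, c$ both even and $b \equiv 2 \mod 4$, so that $\frac{1}{2}q_v$ is integral and primitive with discriminant $d/4$.

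Finally, the asymptotic $\disc(\widetilde{q}_v) \asymp Q(\widetilde{v})$ is a direct consequence of the above: the content of $q_v$ is either $1$ or $2$, so $\disc(\widetilde{q}_v) \in \{d, d/4\}$ and $d = Q(\widetilde{v})$. The step I expect to be the main obstacle is the delicate parity bookkeeping at the prime $2$, where the dichotomy between $d \equiv 1, 2 \mod 4$ and $d \equiv 3 \mod 4$ genuinely arises; this mirrors the analogous analysis carried out in \cite{AES3D, AEShigherdim}.
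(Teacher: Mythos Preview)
The paper does not give a self-contained proof of this lemma; it simply records the statement as a summary of \cite[p.~391--392]{AES3D} and \cite[Lemma~3.3]{AEShigherdim}. Your sketch is thus more detailed than what appears in the paper, and your overall strategy---reduce to primitive $v$, then mirror the computation in Proposition~\ref{prop:Heegnerpointsfor lattices}---is exactly the approach taken in those references.

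There is, however, a genuine gap in your treatment at the prime~$2$. You write that the identity $ac - (b/2)^2 = d$ alone, via parity analysis, forces $a,c$ both even when $d\equiv 3\bmod 4$. This is false as a deduction from the discriminant equation: $a=3$, $c=1$, $b=0$ gives $d=3$ with $a,c$ odd. What makes the argument work is the additional geometric constraint that $q_v$ arises as $Q|_{\Lambda_v}$ for a \emph{primitive} $v$ with $Q(v)=d$. When $d\equiv 3\bmod 4$, all three coordinates of $v$ are odd, so the defining condition $v_1x_1+v_2x_2+v_3x_3=0$ reduces mod~$2$ to $x_1+x_2+x_3\equiv 0$; hence every $x\in\Lambda_v$ has an even number of odd coordinates and $Q(x)$ is even. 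This is the input you need (and is how \cite{AES3D} argues); the bare discriminant equation does not suffice. With this correction the rest of your outline goes through.
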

\fi
\section{The dynamical formulation of the theorem}\label{section:formulation}

\subsection{The joint acting group}\label{section:acting group}

In this section we first determine the stabilizer subgroups for the CM points associated to a given rational plane and use these to define the acting group appearing in the dynamical version of Theorem \ref{thm:main}.

Let $D\in \mathbb{D}$ and let $L \in \mathcal{R}_D$.
For any $\Q$-algebra $A$ and $h \in \SU_2^2(A)$ we (trivially) have
\begin{align*}
q_L(x,y) = Q((x,y,0,0)A_{1,L} P(h)) = Q((x,y,0,0) A_{1,L} P(h)A_{1,L}^{-1} A_{1,L}).
\end{align*}
Moreover, for any $h\in \mathbb{H}_L(A)$ the matrix $P(h)$ preserves $L$ by definition and therefore $A_{1,L} P(h) A_{1,L}^{-1}$ is of the block-form
$\big(\scriptsize\arraycolsep=0.3\arraycolsep\ensuremath{
\begin{array}{c|c}
\ast & 0  \\
\hline
\ast & \ast
\end{array}}
\big)$. Projecting this to the upper left block we obtain a homomorphism $\Psi_{1,L}: \BH_L \to \Stab_{\SL_2}(q_L) =: \BH_{q_L}$ given by 
\begin{align*}
\Psi_{1,L}(h) = \pi_1(A_{1,L} P(h) A_{1,L}^{-1})
\end{align*}
defined over $\Q$. 
Notice that the image of $\Psi_{1,L}$ lies indeed in $\SL_2$ as $\BH_L$ is (by definition) connected.
The map $\Psi_{1,L}(h)$ for $h \in \BH_L(A)$ should be thought of as the restriction of the action of $h$ to the plane $L$
represented in our basis of $L(\Z)$.
Analogously, we have a homomorphism
\begin{align*}
\Psi_{2,L}: \mathbb{H}_{L} \to \Stab_{\SL_2}(q_{L^\perp})=: \BH_{q_{L^{\perp}}},\  
\Psi_{2,L}(h) = \pi_2(A_{2,L} P(h) A_{2,L}^{-1})
\end{align*}
when projecting to the lower right block. 
Observe furthermore that $\Psi_{1,L}(h)$ is trivial if and only if $h \in \mathbb{H}_{L}^{\mathrm{pt}}(A)$ and that $\Psi_{2,L}(h)$ is trivial if and only if $h \in \mathbb{H}_{L^\perp}^{\mathrm{pt}}(A)$.
In Appendix~\ref{section:appendixA} the above isogenies (and also the isogenies for the accidental CM points defined below) are computed explicitly in a special case.

For the accidental CM points the analogous morphisms are given by
\begin{align*}
\Psi_{3,L}: \mathbb{H}_{L} & \to \Stab_{\SL_2}(q_{a_1(L)}) =: \BH_{q_{a_1(L)}},\  
&\Psi_{3,L}(h) = \pi(A_{3,L} P_1(h) A_{3,L}^{-1}) \\
\Psi_{4,L}: \mathbb{H}_{L} &\to \in \Stab_{\SL_2}(q_{a_2(L)})=: \BH_{q_{a_2(L)}},\  
&\Psi_{4,L}(h) = \pi(A_{4,L} P_2(h) A_{4,L}^{-1}) 
\end{align*}
and are also defined over $\Q$.

Overall, we define the $\Q$-group $\torus_L<\SU_2^2 \times \SL_2^4$ to be the graph of the morphism 
\begin{align*}
(\Psi_{1,L},\Psi_{2,L},\Psi_{3,L},\Psi_{4,L}): \mathbb{H}_{L} \to
\BH_{q_L} \times \BH_{q_{L^\perp}}\times \BH_{q_{a_1(L)}} \times \BH_{q_{a_2(L)}}.
\end{align*}
For convenience, we set for any $h \in \BH_L(A)$
\begin{align}\label{eq:def joint acting group}
			t_L(h):= \big(h,
			\Psi_{1,L}(h),
			\Psi_{2,L}(h),
			\Psi_{3,L}(h),
			\Psi_{4,L}(h) \big) \in \torus_L(A).
\end{align}

\subsection{$S$-arithmetic setup}

For any locally compact group $G$ we denote by $m_G$ a Haar measure on $G$. Furthermore, for a quotient $\lquot{\Gamma}{G}$ by a lattice $\Gamma$ we write $ m_{\lquot{\Gamma}{G}}$ for the unique $G$-invariant probability measure where $G$ acts via $g.(\Gamma g') = \Gamma g'g^{-1}$.

Given a set of places $S\subset V_\Q = \set{\infty, 2,3,5,\ldots,}$ of $\Q$ we set $\Q_S$ to be the restricted product of the $\Q_p$ for $p \in S \setminus \set{\infty}$ and $\Q_\infty = \R$ if $\infty\in S$. 
Furthermore, we set $\Z^S = \Z[\frac{1}{p}:p\in S \setminus \set{\infty}]$ and $\widehat{\Z} = \prod_{p \text{ prime}} \Z_p$.
We also write $\adele = \Q_{V_\Q}$ (resp.~$\adelef = \Q_{V_\Q\setminus\{\infty\}}$) for the ring of adeles (resp.~finite adeles).
 
Let $\G$ be a semisimple linear algebraic group defined over $\Q$ and let $K_{\mathrm{f}} = \prod_p K_p < \G(\adelef)$ be a compact open subgroup.
We say that $\G$ has class number one (with respect to $K_{\mathrm{f}}$) if 
\begin{align*}
\G(\adele) = \G(\Q)\, \G(\R)K_{\mathrm{f}}.
\end{align*}
If $\G$ has strong approximation (as is the case for $\SL_N$ for any $N \geq 2$), it has class number one with respect to any compact open subgroup of $\G(\adelef)$.
We also remark that the class number one property yields that $\G$ also has class number one with respect to any compact open $K_{\mathrm{f}}' \supset K_{\mathrm{f}}$.

In this paper we will consider the groups $\SU_2$, $\SL_2$ and products of these.
The integral structure on $\SL_2$ is the standard one inherited from viewing $\SL_2 \subset \Mat_2$ so that $\SL_2(\Z)$, $\SL_2(\Z_p)$ for a prime $p$, $\SL_2(\widehat{\Z})$ and so forth are made sense of. By the above, $\SL_2$ has class number one with respect to $\SL_2(\widehat{\Z})$.
The integral structure on $\SU_2$ is inherited from $\quat$: 
\begin{align*}
\SU_2(\Z) = \SU_2(\Q) \cap \quat(\Z),\ \SU_2(\Z_p) = \SU_2(\Q_p) \cap \quat(\Z_p)
\end{align*}
and so on. Note that $\SU_2(\Z_p)$ is maximal for $p \neq 2$ and that $\SU_2(\Z_2)$ is not maximal (cf.~Section~\ref{section:quaternions}).

\begin{lemma}\label{lem:SU2classnumber}
The group $\SU_2$ has class number one with respect to $\SU_2(\widehat{\Z}) = \prod_p \SU_2(\Z_p)$.
\end{lemma}

The lemma does not follow from strong approximation for $\SU_2$ as $\SU_2(\R)$ is compact.
While it is non-essential for the methods of this article (and indeed not true for other quaternion algebras), the class number one property simplifies the notations in the following which is why we prove it here.

\begin{proof}
For any prime $p$ let $K_p = \SU_2(\Z_p)$ and let $K_p'$ be the group of norm one units in the completion $\mathcal{O}_p = \mathcal{O} \otimes \Z_p$ of the Hurwitz order $\mathcal{O} \subset \quat(\Q)$. Note that $K_p =K_p'$ for all odd primes $p$. As the Hurwitz order is a principal ideal domain, $\SU_2$ has class number one with respect to $K_{\mathrm{f}}' = \prod_p K_p'$ -- see \cite[p.~29]{Linnikthmexpander}.

We deduce from this the analogous property for $\SU_2(\widehat{\Z}) = K_{\mathrm{f}}$. To this end, it suffices to show that for any $g \in K_{\mathrm{f}}'$ there exists $\gamma \in \mathcal{O}^\times$ with $\gamma g \in K_{\mathrm{f}}$. As $K_p = K_p'$ for $p >2$, it suffices to check this property at the prime $2$ which is what we do now by direct calculation. 
Let
\begin{align*}
g = \frac{a_0 + a_1 \ii + a_2 \jj + a_3 \kk}{2} \in K_2' \subset \mathcal{O}_2 \subset \quat(\Q_2)
\end{align*}
for $a_0,a_1,a_2,a_3 \in \Z_2$. By definition of the Hurwitz order either $2 \mid a_\ell$ for all $\ell \in \{0,\ldots,3\}$ or $2 \nmid a_\ell$ for all $\ell$. In the former case, $g \in \quat(\Z_2)$ and we are done. So suppose that the latter case holds and let
\begin{align*}
\gamma = \frac{\varepsilon_0+\varepsilon_1\ii + \varepsilon_2\jj + \varepsilon_3\kk}{2} \in \mathcal{O}^\times
\end{align*}
where $\varepsilon_\ell \in \{-1,1\}$ for every $\ell$. Then
\begin{align*}
\Tr(\gamma g) = \tfrac{1}{2}(a_0 \varepsilon_0 - a_1 \varepsilon_1 - a_2 \varepsilon_2 -a_3\varepsilon_3).
\end{align*}
It is not too hard to see that one can choose the signs $\varepsilon_\ell$ so that this (automatically integral) trace is divisible by $2$ which in turn implies that $\gamma g \in \quat(\Z_2)$.
\end{proof}

For any semisimple $\Q$-group $\G$ the subgroup $\G(\Q) < \G(\adele)$ is a lattice by a theorem of Borel and Harish-Chandra when $\G(\Q)$ is embedded diagonally (see for example \cite[Thm.~5.5]{platonov}). If $\G$ has an integral structure as in the above discussion (obtained for instance by embedding $\G$ into some $\SL_N$) and $S$ is a set of places containing the archimedean place, $\G(\Z^S) < \G(\Q_S)$ is a lattice when embedded diagonally.
Here and in the following we identify $\G(\Z^S)$ with its image under the diagonal embedding.

We will use the $S$-arithmetic extensions
\begin{align*}
&X_{1,S} = X_{2,S} = \lquot{\SU_2(\Z^S)}{\SU_2(\Q_S)},\\
&X_{3,S} = X_{4,S} = X_{5,S} = X_{6,S} = \lquot{\SL_2(\Z^S)}{\SL_2(\Q_S)}.
\end{align*}
of the real quotients $\lquot{\SU_2(\Z)}{\SU_2(\R)}$ and $\lquot{\SL_2(\Z)}{\SL_2(\R)}$.
For simplicity we write $X_{n,\infty} = X_{n,\set{\infty}},\ X_{n,\mathbb{A}} = X_{n,V_\Q}$ for $n=1,\ldots,6$.
The class number one property for $\SU_2$ from Lemma~\ref{lem:SU2classnumber} implies that we have for any two sets $S' \subset S$ of places containing the archimedean place a map
\begin{align*}
X_{1,S} \to X_{1,S'}
\end{align*}
equivariant under $\SU_2(\Q_{S'})$ by taking the quotient with $\prod_{p \in S \setminus S'}\SU_2(\Z_p)$ from the right. Similarly, one obtains maps $X_{n,S} \to X_{n,S'}$ for $n >1$.

We also define the $\Q$-group
\begin{align*}
\mathbb{G} = \SU_2 \times \SU_2 \times \SL_2 \times \SL_2 \times \SL_2 \times \SL_2
\end{align*}
which has class number one with respect to $\G(\widehat{\Z}) = \SU_2(\widehat{\Z})^2 \times \SL_2(\widehat{\Z})^4$ by the above.
We equip any subgroup $\mathbb{M} < \G$ with the integral structure inherited from $\G$ so that for instance $\mathbb{M}(\Z) = \G(\Z) \cap \mathbb{M}(\Q)$.
We set
\begin{align*}
X_S = \lquot{\G(\Z^S)}{\G(\Q_S)} = \prod_{n=1}^6 X_{n,S}
\end{align*}
for any set of places $S$ containing the archimedean place.

\subsection{Toral packets and the dynamical result}\label{section:formulation dyn.result}

Let $L \in \Gr(\Q)$ be a rational plane. 
Following the discussion in Section \ref{section:acting group} we will study orbits in $X_S$ under the $\Q_S$-points of the $\Q$-group $\torus_L$.
First, let us consider the compact adelic orbit $\G(\Q)\torus_L(\adele) \subset X_\adele$ (it is compact as $\torus_L$ is anisotropic over $\Q$). The projection $\packet_L$ of this orbit to $X_S$, where $S= \set{\infty,p,q}$ and $p$, $q$ are two distinct odd primes is an example of a \textit{homogeneous toral packet}, see \cite[Sec.~4,5]{Dukeforcubic} for this terminology.


In order to normalize the behavior on the real quotient $X_\infty$ we 
choose for every plane $L\in \Gr(\Q)$ an element $\ell_L \in \G(\R)$ such that
\begin{align*}
\ell_L^{-1} \torus_L(\R) \ell_L < K:=  \Stab_{\SU_2^2(\R)}(\langle 1, \ii \rangle) \times \SO_2(\R)^4
\end{align*}
and also consider the pushed packet $\ell_L^{-1}.\packet_L$.
Furthermore, let $\mu_L$ be the pushforward of the normalized Haar measure on the shifted orbit $\G(\Q)\torus_L(\adele)\ell_L$ to $\ell_L^{-1}.\packet_L$ (under the natural projection to $X_S$).

\ifsquarefree
In the following we shall call a sequence $(L_n)_n$ of rational planes in $\R^4$ admissible if for every $n$ the discriminant $\disc(L_n(\Z)) = D_n$ satisfies the assumptions in Theorem~\ref{thm:main} for the fixed primes $p,q$ and if $D_n \to \infty$ as $n \to \infty$.
\else
In the following we shall call a sequence $(L_n)_n$ of rational planes in $\R^4$ \textit{admissible} (with respect to $p,q$) if the following conditions are satisfied:
\begin{itemize}
\item For every $n$ the discriminant $\disc(L_n) = D_n$ satisfies the assumptions in Theorem~\ref{thm:main} for the fixed primes $p,q$.
\item As $n$ goes to infinity
\begin{align*}
Q(\widetilde{a}_1(L_n)),\ Q(\widetilde{a}_2(L_n)),\ \frac{Q(\widetilde{a}_1(L_n))Q(\widetilde{a}_2(L_n))}{\disc(L_n)}
\end{align*}
go to infinity. Here, $\widetilde{a}_1(L_n)$, $\widetilde{a}_2(L_n)$ denote the primitive vectors in the half-lines $\Q_+ a_1(L_n)$, $\Q_+ a_2(L_n)$ as in Proposition \ref{prop:geometric-nonsqfree}.
\end{itemize}
We remark that the second condition is automatically satisfied if the discriminants $D_n$ are square-free or more generally if the square-free part of $D_n$ goes to infinity.

The following implies our main theorem (Theorem \ref{thm:main}).

\begin{theorem}[Equidistribution of packets]\label{thm:orbit version}
Let $(L_n)_n$ be an admissible sequence of rational planes. Then 
$\mu_{L_{n}} \to m_{X_S}$ as $n \to \infty$.
\end{theorem}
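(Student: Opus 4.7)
The plan is to follow the strategy introduced in \cite{AES3D}: pass to a subsequential weak-$\ast$ limit $\mu$ of the sequence $\mu_{L_n}$ on $X_S$, verify that $\mu$ is a probability joining of the Haar measures on the six factors $X_{i,S}$, and then invoke the classification of joinings for higher-rank diagonalizable actions \cite[Thm.~1.4]{EL-joining2} to conclude $\mu = m_{X_S}$. Tightness of $\{\mu_{L_n}\}$ is the first step: since $\SU_2$ is $\Q$-anisotropic the factors $X_{1,S}$ and $X_{2,S}$ are compact, so loss of mass could only occur in the four $\SL_2$-factors. On each such factor the pushforward $(\pi_i)_\ast \mu_{L_n}$ is a toral packet, and non-escape of mass for such packets is classical (via linearization, Eskin--Oh, or Einsiedler--Margulis--Venkatesh); hence tightness for the product measures is automatic.

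For the joining property, Lemma \ref{lemma:image of isogenies} shows that $\Psi_{1,L}$ and $\Psi_{2,L}$ are surjective between the relevant adelic double-quotients, with the analogous surjectivity up to squares holding for the accidental isogenies $\Psi_{3,L},\Psi_{4,L}$. Hence each projection $(\pi_i)_\ast \mu_{L_n}$ is (a finite cover of) the toral packet attached to the corresponding CM point. The admissibility hypothesis, combined with Proposition \ref{prop:geometric-nonsqfree} and Lemma \ref{lemma:accidental CM}, guarantees that the discriminants of all four individual CM points tend to infinity along the sequence. Duke's theorem (in its $S$-arithmetic form, applied on each $\SL_2$-factor) then gives equidistribution of each projection to the Haar measure, so any weak-$\ast$ limit $\mu$ is indeed a joining of the six Haar measures.

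The decisive step is to exploit the splitting condition at $p$ and $q$. By Lemma \ref{lemma:reason for congruencecond.}, $\torus_{L_n}(\Q_p)$ and $\torus_{L_n}(\Q_q)$ each contain a rank-$2$ split torus, whose image under every $\Psi_{i,L_n}$ is the full one-parameter split torus in the corresponding $\SL_2(\Q_p)$- or $\SL_2(\Q_q)$-factor. The diagonal action via $t_{L_n}$ therefore makes $\mu$ an invariant measure for a higher-rank diagonalizable action with positive entropy on every factor, which is exactly the setting of \cite[Thm.~1.4]{EL-joining2}; the conclusion is that $\mu$ is a convex combination of algebraic joinings. The main obstacle, and the technical heart of the proof, is then to exclude every proper algebraic coupling among the six factors. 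The key input is the ``$45^\circ$-twist'' of Lemma \ref{lemma:pointwisestab}: the pointwise stabilizers of $L$ and $L^\perp$ embed into $\BH_L$ via genuinely distinct characters $h \mapsto (h,ghg^{-1})$ and $h\mapsto(h,gh^{-1}g^{-1})$, which are moreover independent of the characters through which $\Psi_{3,L}$ and $\Psi_{4,L}$ act. As $g$ varies with $L_n$, no non-trivial algebraic correlation between the factors can persist along an infinite admissible sequence; ruling out the remaining proper algebraic joinings case by case (in analogy with \cite{AES3D}) forces $\mu = m_{X_S}$ and completes the proof of Theorem \ref{thm:orbit version}.
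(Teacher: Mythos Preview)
Your outline matches the paper's strategy (subsequential limit, individual equidistribution via Duke/Linnik, invariance under a rank-two split torus at each of $p,q$, then \cite[Thm.~1.4]{EL-joining2} and a case analysis), but the final paragraph contains a conceptual slip that would not carry the argument as written. The exclusion of proper algebraic joinings is \emph{not} achieved by observing that ``$g$ varies with $L_n$'' so that no fixed algebraic relation can persist along the sequence. The joinings theorem is applied to the \emph{limit} measure $\mu$, which is invariant under a \emph{fixed} torus $T_p\times T_q$ arising from limit planes $\Lambda_p\subset\Q_p^4$, $\Lambda_q\subset\Q_q^4$ (obtained by passing to a further subsequence so that the matrices $A_{i,L_n}$ converge in $\SL_4(\Z_p)$ and $\SL_4(\Z_q)$). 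One then shows, for an ergodic component $\nu$ of $\mu$ and for each pair of factors $X_{m,S}\times X_{n,S}$, that the algebraic group $\mathbb{M}$ produced by \cite{EL-joining2} cannot be the graph of an isomorphism, by exhibiting explicit elements of $T_p$ that lie in $M_p$ but violate the graph property. The $45^\circ$ twist enters exactly here: for $(m,n)=(3,5)$, say, one takes $h\in\BH_{\Lambda_p}^{\mathrm{pt}}(\Q_p)$ non-trivial, so that $\Psi_{1,\Lambda_p}(h)=\mathrm{Id}$ while $\Psi_{3,\Lambda_p}(h)\neq\mathrm{Id}$ by Lemma~\ref{lemma:pointwisestab}.

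Your sketch also handles only the $\SL_2$--$\SL_2$ pairs. Two further cases require separate arguments that the $45^\circ$ twist does not address: for $m,n\leq 2$ (two $\SU_2$-factors) the obstruction is that $M_p$ contains the rank-two torus $\BH_{\Lambda_p}(\Q_p)=\BH_{a_1(\Lambda_p)}(\Q_p)\times\BH_{a_2(\Lambda_p)}(\Q_p)$, whereas a graph would have $\Q_p$-rank one; for $m\leq 2$, $n\geq 3$ (one $\SU_2$ and one $\SL_2$) there is simply no $\Q$-isomorphism $\SU_2\to\SL_2$ since $\SU_2(\R)$ is compact and $\SL_2(\R)$ is not. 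Both are quick but essential, and neither follows from the pointwise-stabilizer picture.
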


%

\section{Proof of the main theorem from the dynamical version}\label{section:prooffromdynstatement}

In this section we show that Theorem \ref{thm:orbit version} does indeed imply Theorem \ref{thm:main}. 
For this we will use adelic orbits of the form $\G(\Q)\torus_L(\adele)\subset X_\adele$ for some $L \in \mathcal{R}_D$ in order to generate additional points in $\mathcal{J}_D$ as in Theorem \ref{thm:main} from one such point (see for instance \cite[Thm. 8.2]{platonov}).

Let $L\in \Gr(\Q)$ be a rational plane. 
Recall that the class number
\begin{align}\label{eq:classin4}
\left|\lrquot{\torus_L(\Q)}{\torus_L(\adele)}{\torus_L(\R \times \widehat{\Z})}\right|
\end{align}
of the group $\torus_L$ is finite (c.f. \cite[Thm.~5.1]{platonov}) and that $\G = \SU_2^2 \times \SL_2^4$ has class number one (i.e. $\G(\Q)\G(\R\times\widehat{\Z}) = \G(\BA)$). We may thus write
\begin{align}\label{eq:decomporbit}
\G(\Q)\torus_L(\mathbb{A}) = \bigsqcup_{\rho \in \mathcal{M}_L} \G(\Q) \rho \torus_L(\R \times \widehat{\Z})
\end{align}
for a finite set $\mathcal{M}_L \subset \G(\R \times \widehat{\Z})$ of representatives.
Note that by construction the cardinality of $\mathcal{M}_L$ is the class number of $\torus_L$
in \eqref{eq:classin4}. 

We now construct points in $\lquot{\SU_2^2(\Z)}{\mathcal{J}_D}$ using the above stabilizer orbit, where $\SU_2^2(\Z)$ acts naturally on $\mathcal{R}_D$ not affecting the other components.
We will implicitly identify $\Gr(\Q)$ with the image in $\Gr(\adele)$ under the injective map
\begin{align*}
L \in \Gr(\Q) \mapsto L \otimes_\Q \adele \in \Gr(\adele)
\end{align*}
and analogously for binary quadratic forms with rational coefficients.
In the following, points $g\in \G(\adele)$ will sometimes be written as $g= (g_1,\ldots,g_6)$ for the corresponding elements $g_1,g_2\in \SU_2(\adele)$ and $g_3,g_4,g_5,g_6 \in \SL_2(\adele)$.

\begin{proposition}[Generating integer points]\label{prop:generating intpts}
Let $L\in \mathcal{R}_D$ be a rational plane and let $g\in \G(\R \times \widehat{\Z})$ with $\G(\Q)g \in \G(\Q)\torus_L(\mathbb{A})$.
\begin{enumerate}[(i)]
\item The plane $L_g = (g_1,g_2).L$ is rational and has the same discriminant as $L$.
Furthermore,
\begin{align*}
Q(\widetilde{a}_1(L_g))= Q(\widetilde{a}_1(L)), \quad Q(\widetilde{a}_2(L_g)) = Q(\widetilde{a}_2(L)).
\end{align*}
\item The quadratic form $g_3.q_L$ is an integral binary form and is equivalent to~$q_{L_g}$. The analogous statement also holds for $q_{L^\perp}$, $q_{a_1(L)}$ and $q_{a_2(L)}$.
\end{enumerate}
It follows that for any $\rho\in \mathcal{M}_L$ we obtain a corresponding point in the set $\mathcal{J}_D$ and in particular a rational plane $L_\rho$.
\end{proposition}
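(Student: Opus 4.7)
The plan is to exploit the decomposition $g = \gamma\, t_L(h)$ with $\gamma \in \G(\Q)$ and $h \in \BH_L(\mathbb{A})$ provided by the hypothesis; componentwise this reads $(g_1, g_2) = (\gamma_1, \gamma_2)\, h$ in $\SU_2^2(\mathbb{A})$ and $g_{2+i} = \gamma_{2+i}\, \Psi_{i,L}(h)$ in $\SL_2(\mathbb{A})$ for $i=1,2,3,4$. Since $h \in \BH_L(\mathbb{A})$ stabilizes $L$ over $\mathbb{A}$, in particular over $\R$, the real plane $L_g = (g_{1,\infty}, g_{2,\infty}).L(\R)$ equals $(\gamma_1, \gamma_2).L(\R)$, so $L_g$ is the real incarnation of the rational plane $(\gamma_1, \gamma_2).L \in \Gr(\Q)$. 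Well-definedness (independence of the decomposition) follows from $\G(\Q) \cap \torus_L(\mathbb{A}) = \torus_L(\Q)$ together with Proposition~\ref{prop:splitting map}, via which $\torus_L(\Q)$ maps into the stabilizer of $L$.

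For the rest of (i), at every finite prime $p$ we have $L_g(\Q_p) = (g_{1,p}, g_{2,p}).L(\Q_p)$ by the same stabilization argument, and since $(g_{1,p}, g_{2,p}) \in \SU_2^2(\Z_p)$ acts on $\Z_p^4$ as an element of $\SO_4(\Z_p)$ via the isogeny $P$, we deduce $L_g(\Z_p) = (g_{1,p}, g_{2,p}).L(\Z_p)$. As this action is a $Q$-isometry, the integral forms $Q|_{L(\Z)}$ and $Q|_{L_g(\Z)}$ are $\Z_p$-isometric at every $p$ and so lie in the same genus; in particular $\disc(L_g) = \disc(L) = D$. For the primitive lengths, a direct computation using \eqref{eq:associated intpts} on transferred $\Z_p$-bases shows that $a_i(L_g)$ and $g_{i,p}\, a_i(L)\, g_{i,p}^{-1}$ agree in $\Z_p^3$ up to a $\Z_p^\times$-factor, in agreement with the equivariance of the Klein map in Proposition~\ref{prop:splitting map}. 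Since $g_{i,p} \in \SU_2(\Z_p)$ acts on $\quat_0(\Z_p)$ as an element of $\SO_3(\Z_p)$ via $P_i$, it preserves the $p$-adic valuation of pure quaternions, so $\ord_p(a_i(L_g)) = \ord_p(a_i(L))$ at every $p$. Combined with $Q(a_i(L_g)) = \disc(L_g) = \disc(L) = Q(a_i(L))$ from Lemma~\ref{lemma:formula lengths} this yields $Q(\widetilde{a}_i(L_g)) = Q(\widetilde{a}_i(L))$.

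For (ii) the central identity is $g_3.q_L = \gamma_3\, \Psi_{1,L}(h).q_L = \gamma_3.q_L$, since $\Psi_{1,L}(h) \in \BH_{q_L}(\mathbb{A})$ stabilizes $q_L$; the right-hand side is a rational form. Integrality follows by localizing: at each finite $p$ the form $g_{3,p}.q_L = \gamma_3.q_L$ is $\Z_p$-integral because $g_{3,p} \in \SL_2(\Z_p)$ preserves the $\Z_p$-integral form $q_L$, so integrality at every finite place gives $\Z$-integrality. To obtain the equivalence $\gamma_3.q_L \sim q_{L_g}$ one compares two $\Q$-bases of $L_g$: the transferred basis $(\gamma_1, \gamma_2).v_i$ (where $v_1, v_2$ is the $\Z$-basis from $A_{1,L}$), in which $Q$ is represented by $q_L$ because $(\gamma_1, \gamma_2)$ is an isometry, and the $\Z$-basis of $L_g(\Z)$ from $A_{1,L_g}$, in which $Q$ is represented by $q_{L_g}$. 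The change of basis lies in $\GL_2(\Q)$ and locally at each $p$ is determined modulo $\SL_2(\Z_p)$ by $\Psi_{1,L}(h_p) = \gamma_3^{-1}g_{3,p} \in \gamma_3^{-1}\SL_2(\Z_p)$; the class number one property for $\SL_2$ then identifies this change of basis globally with $\gamma_3^{-1}$ up to $\SL_2(\Z)$, yielding $q_{L_g} \sim \gamma_3.q_L$. The analogous claims for $q_{L^\perp}$, $q_{a_1(L)}$, $q_{a_2(L)}$ follow verbatim with $\Psi_{2,L}$, $\Psi_{3,L}$, $\Psi_{4,L}$ in place of $\Psi_{1,L}$. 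The main obstacle is the careful bookkeeping of these local change-of-basis matrices and their translation into a global $\SL_2(\Z)$-equivalence, with the correct conventions for the action on binary forms and accounting for the fact that $\Psi_{3,L}$, $\Psi_{4,L}$ map onto squares. The final clause is then immediate: for any $\rho \in \mathcal{M}_L$, part (i) produces a plane $L_\rho \in \mathcal{R}_D$ and part (ii) produces integer binary forms representing the four CM-points attached to $L_\rho$, hence a point of $\mathcal{J}_D$.
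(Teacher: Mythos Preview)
Your proof is essentially correct and follows the same strategy as the paper: decompose $g=\gamma\,t_L(h)$ and use that $h$ stabilises $L$ (resp.\ the forms) to push all questions onto the rational element $\gamma$, then exploit that the resulting rational objects are simultaneously integral at every finite place. A couple of remarks on differences and loose points:

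For (i) the paper argues via the Pl\"ucker wedge $v_1\wedge v_2$: the vector $(g_1,g_2).(v_1\wedge v_2)$ is rational (via $\gamma$) and lies in $\bigwedge^2(\R\times\widehat{\Z})^4$, hence integral; primitivity follows from the $p$-adic norms, and the discriminant is its Euclidean norm. Your local-lattice argument, showing $L_g(\Z_p)=(g_{1,p},g_{2,p}).L(\Z_p)$ and concluding via the genus, is an equally valid and perhaps more conceptual alternative.

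For (ii) your equivalence argument is morally the same as the paper's but the phrasing needs tightening. The paper writes the change-of-basis matrix explicitly as $\pi_1(C)$ with $C=\bigl(\begin{smallmatrix}\gamma_3&0\\0&\id\end{smallmatrix}\bigr)A_{1,L}P(\gamma_1,\gamma_2)^{-1}A_{1,L_g}^{-1}$, and shows $\pi_1(C)\in\GL_2(\Z)$ by rewriting it as $g_3\,\pi_1\bigl(A_{1,L}P(g_1,g_2)^{-1}A_{1,L_g}^{-1}\bigr)$, which visibly lies in $\Mat_2(\Q)\cap\Mat_2(\R\times\widehat{\Z})=\Mat_2(\Z)$ and is locally invertible. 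Your invocation of ``class number one for $\SL_2$'' is really just the statement $\GL_2(\Q)\cap\GL_2(\widehat{\Z})=\GL_2(\Z)$; note that the local change between two $\Z_p$-bases of $L_g(\Z_p)$ lies in $\GL_2(\Z_p)$, not $\SL_2(\Z_p)$, so one should work with $\GL_2$ throughout. Finally, your aside about $\Psi_{3,L},\Psi_{4,L}$ mapping only onto squares is irrelevant here: the argument uses only that $\Psi_{i,L}(h)$ stabilises the form, not surjectivity.
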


\begin{proof}
Choose $t \in \torus_L(\adele)$ and $\gamma \in \G(\Q)$ such that $\gamma t = g$ and write $t = t_L(h)$ as in~\eqref{eq:def joint acting group} for some $h\in \mathbb{H}_L(\adele)$.

To see (i) notice first that $L_g$ is rational as
\begin{equation}\label{eq:Lggammaplane}
L_g = (g_1,g_2).L = (\gamma_1,\gamma_2)h.L = (\gamma_1,\gamma_2).L .
\end{equation}
To compute the discriminant let $v_1$, $v_2$ be a $\Z$-basis of $L(\Z)$. 
Recall that for any group $G$ acting on a module $V$ there is a natural corresponding action of $G$ on $\bigwedge^2 V$ given by $g.(w_1\wedge w_2) = (g.w_1)\wedge (g.w_2)$ for $w_1,w_2 \in V$. We observe
\begin{align*}
\nicewedge{2} (\R \times \widehat{\Z})^4 \ni (g_1,g_2).(v_1 \wedge v_2) 
&= (\gamma_1h_1,\gamma_2h_2).(v_1 \wedge v_2) \\
&= (\gamma_1,\gamma_2).(v_1 \wedge v_2) \in \nicewedge{2} \Q^4
\end{align*}
since $h=(h_1,h_2)$ preserves $L$. As $\Q \cap (\R \times \widehat{\Z}) = \Z$ it follows that $(g_1,g_2).(v_1 \wedge v_2)$ is integral.

To show primitivity we write $g_{1,p},g_{2,p} \in \SU_2(\Zp)$ for the $p$-adic coordinate of $g_1,g_2$. Then
\begin{align*}
\norm{(g_1,g_2).(v_1 \wedge v_2)}_p = \norm{(g_{1,p},g_{2,p}).(v_1 \wedge v_2)}_p = \norm{v_1 \wedge v_2}_p = 1
\end{align*}
for all $p$ (for the maximum norm\footnote{To define the norm $\norm{\cdot}_p$ one chooses a $\Z_p$-basis of $\bigwedge^2\Z_p^4$ and takes the maximum of $p$-adic absolute values of the coordinates in this basis.} $\norm{\cdot}_p$
on the wedge product).
Therefore, $(g_1,g_2).(v_1 \wedge v_2)$ is primitive. 
Hence, the Euclidean norm of $(g_1,g_2).(v_1 \wedge v_2)$ is exactly the discriminant of $L_g$. As $g_1,g_2 \in \SU_2(\adele)$ the former is the Euclidean norm of $v_1 \wedge v_2$ which in turn is the discriminant of $L$.

It remains to show the equality for the lengths of the primitive vectors.
By the equivariance in Proposition \ref{prop:splitting map}, we have $a_1(L_g) = g_1.a_1(L)$. 
As above, it follows from considering every prime that $g_1.\widetilde{a}_1(L)$ as a multiple of $a_1(L_g) = g_1.a(L)$ is primitive.
Thus, $Q(\widetilde{a}_1(L_g)) = Q(g_1.\widetilde{a}_1(L)) = Q(\widetilde{a}_1(L))$ as desired. The argument for $\widetilde{a}_2(L_g)$ is analogous.

For (ii) we begin by showing that $g_3.q_L$ is an integral form. To this end, we just note that
\begin{align*}
g_3.q_L = \gamma_3 t_3.q_L = \gamma_3.q_L
\end{align*}
where the form on the left has coefficients in $\R\times \widehat{\Z}$ and the form on the right has coefficients in $\Q$.
The analogous argument shows that $g_4.q_{L^\perp}$ is integral. 

We now wish to show that $\gamma_3.q_L$ and $q_{L_g}$ are equivalent.
Recall that (as is implicit in the definition of $q_L$ resp.~$q_{L_g}$) we have chosen a matrix $A_{1,L}\in \SL_4(\Z)$ (resp.~$A_{1,L_g}\in \SL_4(\Z)$) so that the first two rows form a basis of $L(\Z)$ (resp. $L_g(\Z)$). 

Now notice that
\begin{align*}
\gamma_3.q_L(u)
&= Q((u\gamma_3,0)A_{1,L})
= Q((u\gamma_3,0)A_{1,L} P(\gamma_1,\gamma_2)^{-1} ) \\
&= Q\big((u\gamma_3,0)A_{1,L} P(\gamma_1,\gamma_2)^{-1} A_{1,L_g}^{-1}A_{1,L_g} \big)
= Q((u,0)C A_{1,L_g})
\end{align*}
where
\begin{align*}
C = \begin{pmatrix}
\gamma_3 & 0 \\ 
0 & \id
\end{pmatrix} A_{1,L} P(\gamma_1,\gamma_2)^{-1} A_{1,L_g}^{-1} \in \Mat_4(\Q).
\end{align*}
Observe that $C$ (acting on row vectors) maps $\R^2 \times \set{(0,0)}$ to itself. Indeed, $A_{1,L}$ maps $\R^2\times \set{(0,0)}$ to $L$, by \eqref{eq:Lggammaplane} $P(\gamma_1,\gamma_2)^{-1}$ maps $L$ to $L_g$, 
and finally $A_{1,L_g}^{-1}$ maps $L_g$ back to $\R^2\times \set{(0,0)}$.
In other words, $C$ is of the block form $\big(\scriptsize\arraycolsep=0.3\arraycolsep\ensuremath{
\begin{array}{c|c}
\ast & 0  \\
\hline
\ast & \ast
\end{array}}
\big)$. Denoting by $\pi_1(C)$ the projection onto the upper left block (as in Section \ref{section:def shapes}) the above calculation can be summarized as $\gamma_3.q_L(u) = \pi_1(C).q_{L_g}(u)$.

We want to show that $\pi_1(C) \in \GL_2(\Z)$.
To see that $\pi_1(C)$ is integral, we use the third component of $\gamma t =g$ 
together with~\eqref{eq:def joint acting group} to obtain
\begin{align*}
\pi_1(C) 
&= \gamma_3\pi_1\left( A_{1,L} P(\gamma_1,\gamma_2)^{-1} A_{1,L_g}^{-1} \right) \\
&= g_3 \underbrace{\pi_1\left(A_{1,L} P(h^{-1}) A_{1,L}^{-1}\right)}_{=t_3^{-1}=\Psi_{1,L}(h^{-1})}
\pi_1\left( A_{1,L} P(\gamma_1,\gamma_2)^{-1} A_{1,L_g}^{-1} \right) \\
&= g_3 \pi_1\left(A_{1,L} P(h^{-1}) P(\gamma_1,\gamma_2)^{-1} A_{1,L_g}^{-1} \right) = g_3 \pi_1\left(A_{1,L} P(g_1,g_2)^{-1} A_{1,L_g}^{-1} \right)\\
&\quad \in\Mat_2(\Q)  \cap \Mat_{2}(\R\times \widehat{\Z}) = \Mat_2(\Z).
\end{align*}
 We note that $A_{1,L} P(g_1,g_2)^{-1} A_{1,L_g}^{-1}\in\GL_4(\R\times\widehat{\Z})$ 
 is a block matrix with $0$ in the right top $2$-by-$2$ block, which implies that the upper left block
 \[
 \pi_1\left(A_{1,L} P(g_1,g_2)^{-1} A_{1,L_g}^{-1} \right) 
 \]
 and hence also $\pi_1(C)=g_3\pi_1\left(A_{1,L} P(g_1,g_2)^{-1} A_{1,L_g}^{-1} \right)\in\GL_2(\R\times\widehat{\Z})$ 
 are invertible. 
This implies that $\pi_1(C) \in \GL_2(\Z)$ as claimed.

This concludes (ii) for $q_L$.
The argument to verify that $g_4.q_{L^\perp}$ is equivalent to $q_{L_g^\perp}$ is completely analogous. 

For the remaining copies one can use the equivariance property in Proposition~\ref{prop:splitting map} to reduce the statement to \cite[Prop.~3.2]{AES3D}.
\end{proof}

%

For $L \in \mathcal{R}_D$ we have by definition of the generated planes in Proposition \ref{prop:generating intpts} that $L_{ \gamma\rho t} = \gamma.L_\rho$ for $\rho\in \mathcal{M}_L$, $t \in \torus_L(\R \times \widehat\Z)$ and $\gamma\in \SU_2^2(\Z)$ and the analogous statement holds for the binary forms.
The function
\begin{align}\label{eq:proj to J_D}
\G(\Q)\torus_L(\mathbb{A})
\to\lquot{\SU_2^2(\Z)}{\mathcal{J}_D}
\end{align}
which maps $\G(\Q)\rho t$ for $\rho\in \mathcal{M}_L$ and $t \in \torus_L(\R \times \widehat{\Z})$ to $\SU_2^2(\Z).L_\rho$ and the attached CM points is thus well-defined (i.e.~independent of the choice of $\mathcal{M}_L$).

Call a plane $L \in \mathcal{R}_D$ \emph{exceptional} if the finite group $\BH_{L}'(\Z)$ strictly contains $\{(\pm1,\pm1)\}$ where $\BH_{L}' = \{g \in \SU_2^2: g.L \subset L\}$ denotes the full stabilizer subgroup.

\begin{proposition}[On the collection of generated points]\label{prop:collection intpts}
Let $L\in \mathcal{R}_D$ for $D \in \mathbb{D}$.
\begin{enumerate}[(a)]
\item Every fiber of the map in \eqref{eq:proj to J_D} is a union of at most two $\torus_L(\R \times \widehat{\Z})$-orbits.
\item
Amongst the fibers in \eqref{eq:proj to J_D} of non-exceptional planes the number of $\torus_L(\R \times \widehat{\Z})$-orbits is equal.
\item 
The set of exceptional planes in $\mathcal{R}_D$ has size as most $ D^{\frac{1}{2}+o(1)}$.
\item The volume of the $\torus_L(\R \times \widehat{\Z})$-orbit through $\G(\Q)\rho$ is independent of $\rho \in~\mathcal{M}_L$.
\end{enumerate}
\end{proposition}

We will call the image of \eqref{eq:proj to J_D} the \emph{packet} attached to $L$ and denote it by $\mathcal{P}(L)$.
Clearly, if two points lie on the same $\torus_L(\R \times \widehat{\Z})$-orbit, they give rise to the same point in $\lquot{\SU_2^2(\Z)}{\mathcal{J}_D}$.
By Proposition~\ref{prop:generating intpts}(ii) it suffices to prove the analogous statement to (a) and (b) for the fibers of the map
\begin{align}\label{eq:projection to planes}
\SU_2^2(\Q)\BH_L(\mathbb{A})
\to\lquot{\SU_2^2(\Z)}{\mathcal{R}_D}.
\end{align}
To this end, we will use the following lemma.

\begin{lemma}\label{lem:fibertechnical}
Define the group
\begin{align*}
F_L = \setc{h' \in \BH_L'(\R \times \widehat{\Z})}{\SU_2^2(\Q)\BH_L(\adele)h' = \SU_2^2(\Q)\BH_L(\adele)}.
\end{align*}
Then the fiber of the map \eqref{eq:projection to planes} through any point $x \in \SU_2^2(\Q)\BH_L(\mathbb{A})$ is equal to $xF_L$.
If we denote by $n(x)$ the number of $\BH_L(\R \times \widehat{\Z})$-orbits in the fiber $xF_L$, we have
\begin{align}\label{eq:no of fibers}
n(x) \leq [F_{L}:\BH_{L}(\R \times \widehat{\Z})].
\end{align}
\end{lemma}

\begin{proof}[Proof of Lemma~\ref{lem:fibertechnical}]
By definition of $F_L$ and the generated planes in Proposition~\ref{prop:generating intpts}, the fiber through a point $x \in \SU_2^2(\Q)\BH_L(\adele)$ contains $xF_L$.
Conversely, if $y \in \SU_2^2(\Q)\BH_L(\adele)$ is another point in the fiber through $x$, we write $x = \SU_2^2(\Q)g_x$ and $y = \SU_2^2(\Q)g_y$ for elements $g_x,g_y \in \SU_2^2(\R \times \widehat{\Z})$ and replace $g_y$ if necessary so that $g_x.L = g_y.L$.
Then $h' = g_x^{-1}g_y$ is an element of $\BH_L'(\adele) \cap \SU_2^2(\R \times \widehat{\Z})= \BH_L'(\R \times \widehat{\Z})$ and satisfies $xh' = y$. 
Moreover, since $\BH_L < \BH_L'$ is normal
\begin{align*}
\SU_2^2(\Q)\BH_L(\adele)h' =  x \BH_L(\adele) h' =  xh' \BH_L(\adele) = y \BH_L(\adele) = \SU_2^2(\Q)\BH_L(\adele)
\end{align*}
and so $h' \in F_L$ as claimed.

For the second part, write $x = \SU_2^2(\Q)g$ for $g \in \SU_2^2(\R \times \widehat{\Z})$. Then 
\begin{align*}
n(x) 
&= \left|\lrquot{g^{-1}\SU_2^2(\Q)g \cap F_L}{F_L}{\BH_L(\R\times \widehat{\Z})}\right| \\
&= \left|\lrquot{\SU_2^2(\Q) \cap gF_Lg^{-1}}{gF_Lg^{-1}}{g\BH_L(\R\times \widehat{\Z})g^{-1}}\right|.
\end{align*}
Now note that $g\BH_L(\R\times \widehat{\Z})g^{-1} = \BH_{g.L}(\R \times \widehat{\Z})$ by definition of $\BH_L$ and integrality of $g$.
Also, we have that $gF_Lg^{-1} = F_{g.L}$. 
Indeed, for any $h' \in F_L$ the element $gh'g^{-1}$ is in $g\BH_L'(\R \times \widehat{\Z})g^{-1} = \BH_{g.L}'(\R \times \widehat{\Z})$ and
\begin{align*}
\SU_2^2(\Q) \BH_{g.L}(\adele)gh'g^{-1}
&= \SU_2^2(\Q) g\BH_L(\adele)g^{-1}gh'g^{-1} = x \BH_L(\adele)h'g^{-1} \\
&= \SU_2^2(\Q) \BH_L(\adele) h'g^{-1} = \SU_2^2(\Q) \BH_L(\adele)g^{-1}\\
&= \SU_2^2(\Q)g\BH_L(\adele)g^{-1} = \SU_2^2(\Q) \BH_{g.L}(\adele).
\end{align*}
In particular, $gF_Lg^{-1} \cap \SU_2^2(\Q) \subset \BH_{g.L}'(\Z)$ and using normality of $\BH_L$ in $\BH_L'$ equality holds.
We conclude that
\begin{align}\label{eq:indexestimate}
\begin{array}{l}
n(x) =  \left|\lrquot{\BH_{g.L}'(\Z)}{F_{g.L}}{\BH_{g.L}(\R\times \widehat{\Z})}\right| \\
\qquad\ \leq [F_{g.L}:\BH_{g.L}(\R \times \widehat{\Z})]
= [F_{L}:\BH_{L}(\R \times \widehat{\Z})].
\end{array}
\end{align}
which proves the second part of the lemma.
\end{proof}

\begin{proof}[Proof of Proposition~\ref{prop:collection intpts}]
To prove the claim in (a) notice first that by \eqref{eq:no of fibers} it suffices to estimate the index $[F_{L}:\BH_{L}(\R \times \widehat{\Z})]$.
To do the latter, observe that if $h' \in F_{L}$ satisfies $h'_{\infty} \in \BH_{L}(\R)$ then $h' \in \BH_{L}(\R \times \widehat{\Z})$.
Indeed, we can write the point $\SU_2^2(\Q)h'$ as $\SU_2^2(\Q)h$ for $h \in \BH_{L}(\adele)$ and so there is $\gamma \in \SU_2^2(\Q)$ with $\gamma h = h'$. Thus, $\gamma = h'h^{-1} \in \BH_{L}'(\Q)$ and since $h'_\infty \in \BH_{L}(\R)$ we further have $\gamma = h'_\infty h_\infty^{-1} \in \BH_{L}(\R)$ so that $\gamma \in \BH_{L}(\Q)$.
This proves that $h' = \gamma h \in \BH_{L}(\adele)\cap \BH_{L}'(\R \times \widehat{\Z}) = \BH_{L}(\R \times \widehat{\Z})$.
As the index of $\BH_{L}(\R)$ in $\BH_{L}'(\R)$ is $2$, $[F_{L}:\BH_{L}(\R \times \widehat{\Z})]\leq 2$ and \eqref{eq:no of fibers} implies the claim in (a).

To see (b), note that if the plane $g.L$ generated by $x$ is non-exceptional, we have $\BH_{g.L}'(\Z) = \{(\pm 1,\pm 1)\} = \BH_{g.L}(\Z)$ which is contained in the center.
In this case, \eqref{eq:indexestimate} implies that equality holds in \eqref{eq:no of fibers}.
Since the right hand side in \eqref{eq:no of fibers} is independent of the point $x= \SU_2^2(\Q)g$, this proves (b).

%
%

For (c) it suffices to show that for any $g \in \SU_2^2(\Z) \setminus \{(\pm1,\pm1)\}$ the number of planes $L' \in \mathcal{R}_D$ with $g.L' = L'$ is of size $D^{\frac{1}{2}+o(1)}$.
We distinguish two cases.
\begin{itemize}
\item There exists a two-dimensional irreducible subspace $L_0 \subset \Q^4$ for the action of $g$. In this case, $g$ must act as a rotation on $L_0$ and hence $g_1.b_1 = b_1$ and $g_2.b_2 = b_2$ where $b_i = a_i(L_0)$.
Suppose without loss of generality that $g_1 \neq \pm1$. In this case, its action on pure quaternions by conjugation is non-trivial and hences fixes a unique rational line.
Therefore, we have that $a_1(L') \in \Q b_1$ for any $L'\in \mathcal{R}_D$ with $g.L' \subset L'$ and in particular $a_1(L') = \pm a_1(L'')$ for any two such $L',L''$ as the norms agree.
Using the Klein map the only 'free' choice is thus in the vector $a_2(\cdot)$. The set of all such $L'$ must be thus bounded by the number of vectors $v \in \quat^0(\Z)$ with $\Nr(v) = D$ which is $D^{\frac{1}{2}+o(1)}$.
\item 
Any irreducible subspace for the action of $g$ has dimension $1$.
Thus, the action of $g$ on $\Q^4$ is diagonalizable and has determinant one. As $g \not\in \{(\pm1,\pm1)\}$, there must be exactly one two-dimensional subspace $L_0$ such that $g|_{L_0} = \id_{L_0}$ and $g|_{L_0}^\perp = -\id_{L_0^\perp}$.
If $L'$ is a plane with $g.L' \subset L'$ then either $L'\in \{ L_0,L_0^\perp\}$ or $\dim(L_0 \cap L') = \dim(L_0^\perp \cap L') =1$.
In the latter case, $g$ is a reflection when restricted to $L'$ and therefore (see~\eqref{eq:bilin&antisymm})
$g_1.a_1(L')= -a_1(L')$ and $g_2.a_2(L')= -a_2(L')$. 
This shows that $a_1(L') \perp a_1(L_0)$ and $a_2(L') \perp a_2(L_0)$ as $g_1,g_2$ act by orthogonal transformations and preserve $a_1(L_0)$ resp.~$a_2(L_0)$.
So this case corresponds to counting the number of representations of $D$ by an integral binary form, which is of order $D^{o(1)}$ (cf.~\cite[p.~372]{cassels}).
\end{itemize}

Thus, the number of such pairs $(a_1(L'),a_2(L'))$ is $D^{\frac{1}{2}+o(1)}$.
This proves (c).

For (d) observe that for any $\rho \in \mathcal{M}_L$
\begin{align*}
\Stab_{\torus_L(\R \times \widehat{\Z})}(\G(\Q)\rho)
&= \Stab_{\torus_L(\R \times \widehat{\Z})}(\G(\Q)t)
= \Stab_{\torus_L(\R \times \widehat{\Z})}(\G(\Q))\\
&= \torus_L(\R \times \widehat{\Z}) \cap \G(\Q) =  \torus_L(\Z)
\end{align*}
where $t\in \torus_L(\mathbb{A})$ was chosen with $\G(\Q) \rho = \G(\Q)t$ and where we used that $\torus_L$ is abelian in the second equality.
%
\end{proof}

\begin{remark}[Decomposing into packets]\label{rem:decomp into packets}
Note that given two planes $L,L'\in \mathcal{R}_D$ the question whether or not $L'$ can be generated from $L$ as in Proposition \ref{prop:generating intpts} is equivalent to asking whether or not $\gamma \in \SU_2^2(\Q)$ and $g \in \SU_2^2(\R\times \widehat{\Z})$ exist with $\gamma.L = g.L = L'$.
This defines an equivalence relation and hence we can decompose the set $\lquot{\SU_2^2(\Z)}{\mathcal{J}_D}$ into disjoint packets.
\end{remark}

Let $L \in \mathcal{R}_D$ for $D \in \mathbb{D}$\ifsquarefree\ square-free\fi.
We now project the set in \eqref{eq:decomporbit} onto the $S=\set{\infty,p,q}$-arithmetic quotient $X_S$ to obtain the packet of orbits
\begin{align*}
\packet_L = \bigsqcup_{\rho \in \mathcal{M}_L} \G(\Z^S) (\rho_\infty,\rho_p,\rho_q) \torus_L(\R \times \Z_p \times \Z_q)
\subset X_S
\end{align*}
invariant under $\torus_L(\Q_S)$.
The union is still disjoint: If $x = \G(\Q)t$, $x' = \G(\Q)t'$ have the same image in $\packet_L$ there exists $g\in \G(\prod_{\sigma\not\in S}\Z_\sigma)$ and $\gamma\in \G(\Q)$ with $\gamma t = t' g$.
In particular, this equation at the infinite place yields $\gamma \in \torus_L(\Q)$ so that $g \in \G(\prod_{\sigma\not\in S}\Z_\sigma) \cap \torus(\adele) = \torus_L(\prod_{\sigma\not\in S}\Z_\sigma)$ as desired.
Note that the projection~\eqref{eq:proj to J_D} factors through the projection to $\packet_L$.

%


\begin{proof}[Proof of Theorem \ref{thm:main} assuming Theorem \ref{thm:orbit version}]
For any plane $L\in \Gr(\Q)$ choose an element $\ell_L \in \BG(\R)$ with $\ell_L^{-1} \torus_L(\R) \ell_L < K$.
By Theorem \ref{thm:orbit version} we know that $\mu_{L_n} \to m_{X_S}$ along any admissible sequence $(L_n)_n$.
In particular, the convergence holds after pushforward to the real quotient $X_\infty$ and to
\begin{align}\label{eq:packet}
\rquot{X_\infty}{K} = \big(\lquot{\SU_2^2(\Z)}{\Gr(\R)}\big) \times \mathcal{X}_2^4 =: Y.
\end{align}
For any $D \in \mathbb{D}$ and $L \in \mathcal{R}_D$ we let $\nu_{L}$ be the normalized sum of Dirac measures over the packet $\mathcal{P}(L) \subset \lquot{\SU_2^2(\Z)}{\mathcal{J}_D}$ attached to $L$ -- see Proposition~\ref{prop:collection intpts}.
By part (b)-(d) of that proposition and by Corollary~\ref{cor:count}, the pushforward measure on $\mu_L$ to~$Y$ and $\nu_{L}$ differ for $f\in C_b(Y)$ by at most $\norm{f}_\infty D^{-\frac{1}{2}+o(1)}$ 
as the weights of these measures on $\mathcal{P}(L)$ need to be changed by at most $1$ on exceptional planes only.
So the measures $\nu_{L_n}$ are also equidistributing as $n \to \infty$.

Since $\mathcal{J}_D$ is $\SU_2^2(\Z)$-invariant, by a similar argument the sets $\mathcal{J}_D$ equidistribute in $\Gr(\R) \times \mathcal{X}_2^4$ if and only if the sets $\lquot{\SU_2^2(\Z)}{\mathcal{J}_D}$ equidistribute in $Y$. 
We claim that the latter is true.

To see this, write the set $\lquot{\SU_2^2(\Z)}{\mathcal{J}_D}$ for $D \in \BD$ with 
\begin{align*}
\legendre{-D}{p} = \legendre{-D}{q} = 1
\end{align*}
as a disjoint union of packets -- see Remark~\ref{rem:decomp into packets}.
Let $\mathscr{G}_D$ be the union of the packets attached to planes $L_D\in \mathcal{R}_D$ with $Q(\widetilde{a}_1(L_D))\geq D^{\frac{2}{3}}$ and $Q(\widetilde{a}_2(L_D)) \geq D^{\frac{2}{3}}$.
Recall from Corollary~\ref{cor:count} that $\lquot{\SU_2^2(\Z)}{\mathcal{J}_D}$ is of size $D^{1+o(1)}$.
Also, observe that the number of pairs of primitive integer points where one of the points has quadratic value at most $D^{\frac{2}{3}}$ is of size $D^{\frac{1}{2}+o(1)}D^{\frac{1}{3}+o(1)} = D^{\frac{5}{6}+o(1)}$ (see the proof of Corollary~\ref{cor:count}).
Thus, the sets $\lquot{\SU_2^2(\Z)}{\mathcal{J}_D}$ equidistribute if and only if the subsets $\mathscr{G}_D$ equidistribute.

However, by Theorem \ref{thm:orbit version} and the above discussion any sequence of packets $\mathcal{P}(L_D) \subset \mathscr{G}_D$ equidistributes since 
\begin{align*}
\frac{Q(\widetilde{a}_1(L_D))Q(\widetilde{a}_2(L_D))}{D} \geq D^{\frac{1}{3}}
\end{align*}
(which implies admissibility of the underlying planes $L_D$).
This implies that the sets $\mathscr{G}_D$ equidistribute (as finite unions of equidistributing subsets) and hence concludes the proof of Theorem \ref{thm:main}.
\end{proof}

%
\section{Proof of Theorem \ref{thm:orbit version}}\label{section:proofdynresult}

Let $(L_j)_j$ be a sequence of admissible planes so that the sequence of measures $\mu_j = \mu_{L_j}$ converges to a measure $\mu$. 
We want to show that $\mu$ is the normalized Haar measure $m_{X_S}$ on $X_S = \lquot{\G(\Z^S)}{\G(\Q_S)}$. 
Since the limit $\mu$ is then independent of the arbitrarily chosen sequence $(L_j)_j$, this implies Theorem \ref{thm:orbit version}.
To prove that $\mu = m_{X_S}$ we will use the fact that the pushforward of $\mu$ under all projections to the factors in $X_S$ is the Haar measure on the respective factor and then apply a joinings classification of Lindenstrauss and the second named author \cite{EL-joining2}.

\begin{proposition}[Limit measures are joinings]\label{prop:individualequidistr}
The push-forward of $\mu$ under any projection $X_S \to X_{i,S}$ for $i=1,\ldots,6$ is the Haar measure on $X_{i,S}$. In particular, $\mu$ is a probability measure.
\end{proposition}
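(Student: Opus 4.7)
My plan is to reduce each of the six marginal equidistribution statements to the classical theorems of Duke (or to Linnik's ergodic method, using the splitting conditions at $p$ and $q$) that were recalled in the introduction. Once every marginal pushforward is shown to be a probability measure, no mass can escape to infinity in the product, so the ``in particular'' clause will come for free.

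First I would identify the pushforward $(\pi_i)_*\mu_j$ along each projection $\pi_i\colon X_S\to X_{i,S}$. Because all six factor groups have class number one, the adelic description of $\mu_j$ as the normalized Haar measure on $\G(\Q)\torus_{L_j}(\adele)\ell_{L_j}$ implies that $(\pi_i)_*\mu_j$ is the normalized orbit measure on the image in $X_{i,S}$, translated by the archimedean component of $\ell_{L_j}$. Combining the definition of $\torus_L$ in \eqref{eq:def joint acting group} with the surjectivity assertions of Lemma~\ref{lemma:image of isogenies} (and its accidental analogue, whose image is the squares subgroup), that image is, up to finite covers of uniformly bounded degree, a concrete toral packet: for $i=1,2$ the $\BH_{a_i(L_j)}$-packet on $\lquot{\SU_2(\Z^S)}{\SU_2(\Q_S)}$ whose archimedean projection is in bijection with the $\SO_3(\Z)$-orbit of the primitive vector $\widetilde a_i(L_j)$ of squared length $Q(\widetilde a_i(L_j))$; for $i=3,4$ the CM packet attached to the primitive form $\widetilde q_{L_j}$ respectively $\widetilde q_{L_j^\perp}$ (Propositions~\ref{prop:Heegnerpointsfor lattices} and~\ref{prop:geometric-nonsqfree}); and for $i=5,6$ the CM packet attached to $\widetilde q_{a_i(L_j)}$ (Lemma~\ref{lemma:accidental CM}).

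Next I would check that the discriminants of these packets go to infinity along any admissible sequence. For $i=1,2,5,6$ the relevant squared radii $Q(\widetilde a_i(L_j))$ diverge by the first two admissibility conditions (and coincide with the CM discriminants up to a bounded factor when $i=5,6$, by Lemma~\ref{lemma:accidental CM}). For $i=3,4$ the third admissibility condition together with the asymptotic \eqref{eq:formula discriminant CM} will give $\disc(\widetilde q_{L_j})\to\infty$. At this point the splitting conditions at $p$ and $q$ combined with Lemma~\ref{lemma:reason for congruencecond.} produce a $\Q_p$- and $\Q_q$-split torus inside every stabilizer, so Linnik's ergodic method will apply directly on the $S$-arithmetic quotient; alternatively, Duke's theorem \cite{duke88} handles the archimedean projection and the class number one property lifts equidistribution to $X_{i,S}$. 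Either way $(\pi_i)_*\mu_j\to m_{X_{i,S}}$, and passing to the limit gives $(\pi_i)_*\mu=m_{X_{i,S}}$ for every $i$.

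The hard part, I expect, will be step one: the bounded-index covers caused by the gap between $\BH_L$ and its two-element extension $\BH_L'$, by the squares subgroup in the image of the accidental isogenies $\Psi_{3,L},\Psi_{4,L}$, and by the non-square-free divisors of $D$ treated in Section~\ref{section:local for forms}, must be shown not to trap the pushforward inside a strictly proper subpacket on which the classical equidistribution theorems do not apply directly. This reduces to a prime-by-prime comparison between the local stabilizers inside $\torus_{L_j}$ and the full stabilizers of each associated geometric object, for which the structural results of Sections~\ref{section:Klein map} and~\ref{section:CMpts} are exactly what is needed.
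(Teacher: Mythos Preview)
Your sketch is correct and follows essentially the same route as the paper, which also only sketches this proposition. Both argue that each marginal is a classical toral packet to which Duke's theorem or Linnik's ergodic method applies, with the splitting conditions at $p,q$ supplying the required split torus via Lemma~\ref{lemma:reason for congruencecond.}, and the admissibility conditions driving the relevant discriminants to infinity.

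The one place where the paper is more explicit than you is in resolving what you flag as ``the hard part''. You correctly note that the accidental isogenies $\Psi_{3,L},\Psi_{4,L}$ land only in the squares of the target torus, and worry that this might trap the marginal in a proper subpacket. The paper's observation is that, after identifying the torus class groups with the Picard group of the relevant order, the packets in $X_{1,S},\ldots,X_{4,S}$ correspond to the set of squares in that Picard group and those in $X_{5,S},X_{6,S}$ to the set of \emph{fourth} powers (there is an additional squaring coming from the $\SU_2\to\SO_3$ isogeny, cf.\ \cite[Lemma~7.2]{W18} and \cite[Sec.~4.2]{Linnikthmexpander}). The subpackets are then large enough for Linnik's method because the $2$-torsion and $4$-torsion of the Picard group have size $D^{o(1)}$ (genus theory, e.g.\ \cite[p.~342]{cassels}). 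This torsion bound is the concrete mechanism that dissolves your ``hard part''; your structural reduction is right, but you should name this bound rather than leave it implicit.
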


This proposition should be seen as a version of Duke's Theorem \cite{duke88} or its strengthenings to subcollections (see specifically \cite{harcosmichelII}).
Indeed, as we project on a factor, we obtain subsets of toral packets for a form of $\SL_2$.
Since we assume splitting conditions, individual equidistibution may be proven by means of Linnik's ergodic method \cite{linnik}, since the total volume of the packets we consider is large enough in each factor.
In the following we give more details to this reasoning.

In the case $i=1,2$, the proposition amounts to showing that $\SU_2(\Q)\BH_{a_i(L_j)}(\adele)\ell_{L_j,i}$ projected to $X_{i,S}$ equidistributes to the Haar measure.
This is a version of Duke's theorem for the form $\SU_2$ of $\SL_2$ -- see for example \cite[Thm.~4.6]{Dukeforcubic}, \cite{harcosmichelII} or \cite[Thm.~7.1]{W18} (as we are assuming splitting conditions).
We remark that as we consider the simply connected cover $\SU_2$ (instead of the adjoint group $\SO_3$), we are considering not the whole Picard group attached to the quadratic order defined by $a_i(L_j)$, but rather the set of squares in it \cite[Lemma~7.2]{W18}.
Since the $2$-torsion of the Picard group has size $D^{o(1)}$ (see e.g.~\cite[p.~342]{cassels}), the squares form a subgroup of size $D^{\frac{1}{2}+o(1)}$ which is why one can still apply Linnik's ergodic method (as is done in \cite[Thm.~7.1]{W18}).

For $i=3$ (and similarly $i=4$) the image under projection to $X_{i,\adele}$ could potentially be 'small' in comparison to $\SL_2(\Q)\BH_{q_{L_j}}(\adele)\ell_{L_j,i}$. The following lemma rules this out and hence one may apply the same theorems cited above.

\begin{lemma}[About the image]\label{lemma:image of isogenies}
Let $\BK$ be a field with $\operatorname{char}(\BK) = 0$ and let $L \in \mathcal{R}_D$.
The maps $\Psi_{1,L}:\BH_L(\BK) \to \BH_{q_L}(\BK)$ and $\Psi_{2,L}:\BH_L(\BK) \to \BH_{q_{L^\perp}}(\BK)$ are surjective.
In particular, the natural map induced by $\Psi_{1,L}$
\begin{align*}
\lrquot{\BH_L(\Q)}{\BH_L(\adele)}{\BH_L(\R \times \widehat{\Z})}
\to \lrquot{\BH_{q_L}(\Q)}{\BH_{q_L}(\adele)}{\BH_{q_L}(\R \times \widehat{\Z})}
\end{align*}
is surjective (and similarly for $\Psi_{2,L}$).
\end{lemma}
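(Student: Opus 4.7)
The plan is to exploit the product decomposition $\BH_L = \BH_{a_1(L)} \times \BH_{a_2(L)}$ from Proposition \ref{prop:splitting map} and to show that $\Psi_{1,L}$ already becomes an isomorphism after restriction to the first factor $\BH_{a_1(L)} \times \{1\}$. Since this restriction is a $\Q$-morphism between connected one-dimensional algebraic tori, it suffices to verify that it has trivial kernel.

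To that end, observe that $(h_1, 1)$ acts on $\quat$ by left multiplication $x \mapsto h_1 x$ and thus preserves $L$; by definition, $\Psi_{1,L}(h_1,1)$ is the matrix of this restricted action in the chosen integral basis of $L(\Z)$. If this action is trivial, then $h_1 x = x$ for every $x \in L$, and since $Q|_L$ is positive definite, any non-zero rational $x \in L$ satisfies $Q(x) \in \Q_{>0}$ and is therefore invertible in $\quat$ over any field of characteristic zero; cancelling yields $h_1 = 1$. Hence
\begin{align*}
\Psi_{1,L}|_{\BH_{a_1(L)} \times \{1\}} \colon \BH_{a_1(L)} \xrightarrow{\ \sim\ } \BH_{q_L}
\end{align*}
is an isomorphism of $\Q$-algebraic groups, which immediately yields surjectivity of $\Psi_{1,L}$ on $\BK$-points.

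For the adelic statement, note in addition that the formula defining $\Psi_{1,L}$ has $\Z$-coefficients (as $A_{1,L} \in \SL_4(\Z)$ and $P$ is a $\Z$-polynomial), so that $(\R \times \widehat{\Z})$-points map to $(\R \times \widehat{\Z})$-points. Consequently the above isomorphism of $\Q$-tori induces an isomorphism on the level of adelic double coset spaces, and surjectivity of the map on double cosets for $\Psi_{1,L}$ follows by precomposing with the inclusion $\BH_{a_1(L)} \hookrightarrow \BH_L$. The argument for $\Psi_{2,L}$ is symmetric: by Proposition \ref{prop:integrality for splitting map} the plane $L^\perp$ also lies in $\mathcal{R}_D$, so one restricts $\Psi_{2,L}$ to $\{1\} \times \BH_{a_2(L)}$, whose elements act on $L^\perp$ by right multiplication $x \mapsto x h_2^{-1}$.

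The only conceptual point in the argument is identifying the right section of $\Psi_{1,L}$. From Lemma \ref{lemma:pointwisestab} the pointwise stabilizer $\BH_L^{\mathrm{pt}}$ is a twisted diagonal in $\BH_{a_1(L)} \times \BH_{a_2(L)}$ which meets each coordinate factor only at the identity; either factor therefore furnishes a complementary one-dimensional $\Q$-subtorus of $\BH_L$ mapping isomorphically onto $\BH_{q_L}$ under $\Psi_{1,L}$, and this is what drives the whole argument.
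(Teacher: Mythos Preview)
Your proof is correct and follows essentially the same strategy as the paper: restrict $\Psi_{1,L}$ to the factor $\BH_{a_1(L)}\times\{1\}$, show the restriction has trivial kernel, and conclude that an injective $\Q$-homomorphism between one-dimensional $\Q$-tori is a $\Q$-isomorphism. The only difference is that the paper identifies the kernel as $\BH_{a_1(L)}\cap\BH_L^{\mathrm{pt}}$ and invokes Lemma~\ref{lemma:pointwisestab}, whereas you give the direct argument via left multiplication and invertibility of a nonzero vector in $L(\Q)$; your final paragraph makes the link to Lemma~\ref{lemma:pointwisestab} explicit anyway, so the two presentations are nearly interchangeable.
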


We remark that the isogenies $\Psi_{3,L}$ and $\Psi_{4,L}$ are not surjective on $\BK$-points.
In fact, the image of the $\BK$-points is the set of squares and a similar statement holds for the induced map on class groups.
We refer to \cite[Sec.~7.1.1]{W18} for a thorough discussion of this, see also \cite[Sec.~4.2]{Linnikthmexpander}.
This lack of surjectivity is inconsequential in the subsequent arguments as the $2$-torsion of the class group is small.

\begin{proof}[Proof of Lemma~\ref{lemma:image of isogenies}]
It suffices to show that the restriction $\psi$ of $\Psi_{1,L}$ to $\BH_{a_1(L)}$ is a $\Q$-isomorphism. 
Now note that the kernel of $\psi$ is $\BH_{a_1(L)}\cap \BH_{L}^{\mathrm{pt}}$ and thus trivial by Lemma~\ref{lemma:pointwisestab}.
As an injective $\Q$-homomorphism between $\Q$-tori of rank $1$ is a $\Q$-isomorphism. (We remark that the inverse can be explicitly constructed in the case at hand.)
%
\end{proof}

In the case $i=5$ or $i=6$ we correspondingly see the set of fourth powers. This is due to the orthogonal complement construction -- see for instance \cite[Sec.~4.2]{Linnikthmexpander} and the way it is used in \cite{AES3D}.
The $4$-torsion of the Picard group is still of size $D^{o(1)}$ so that \cite{harcosmichelII} or \cite[Thm.~7.4]{W18} cover Proposition~\ref{prop:individualequidistr}.
%
%

Essential to the characterization of the joining $\mu$ is the fact that $\mu$ exhibits invariance under a higher rank diagonalizable action. This is the reason why the additional congruence conditions in Theorem \ref{thm:main} are needed (see also Lemma \ref{lemma:reason for congruencecond.}).

\begin{lemma}\label{lemma:inv.of the limit}
There exist planes $\Lambda_p\subset \Q_p^4$ and $\Lambda_q \subset \Q_q^4$
so that $\mu$ is invariant under the two commuting, diagonalizable subgroups
\begin{align*}
&T_p := \left\lbrace
\big(h,\Psi_{1,\Lambda_p}(h),\Psi_{2,\Lambda_p}(h),\Psi_{3,\Lambda_p}(h),\Psi_{4,\Lambda_p}(h)\big):
h \in \mathbb{H}_{\Lambda_p}(\Q_p)\right\rbrace \subset \G(\Q_p), \\
&T_q := \left\lbrace
\big(h,\Psi_{1,\Lambda_q}(h),\Psi_{2,\Lambda_q}(h),\Psi_{3,\Lambda_q}(h),\Psi_{4,\Lambda_q}(h)\big)):
h \in \mathbb{H}_{\Lambda_q}(\Q_q)\right\rbrace \subset \G(\Q_q).
\end{align*}
In other words, $\mu$ is invariant under $\mathbf{T} = T_p \times T_q$. Furthermore, $\mathbf{T}$ contains a subgroup $\mathbf{A} \cong \Z^4$ of class-$\mathcal{A}'$, which acts ergodically with respect to the Haar measure on each factor $X_{i,S}$ where $i= 1,\ldots,6$.
\end{lemma}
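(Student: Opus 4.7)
The plan is to extract subsequences along which the invariance tori of the $\mu_j$ converge to fixed tori at $p$ and $q$. I treat the two primes symmetrically. By the congruence condition $-D_j \in (\Fp^\times)^2$ combined with Lemma \ref{lemma:reason for congruencecond.}, each $\BH_{L_j}(\Qp)$ is a $\Qp$-split torus of rank two; the same is true of its image $\torus_{L_j}(\Qp) \subset \G(\Qp)$ under the isogeny $t_{L_j}$, and $\mu_j$ is invariant under right-translation by this image.

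First I would like to put these tori in a uniform form. The key observation is that $\BH_{a_i(L_j)}$ depends only on the $\Qp^\times$-projective class of $a_i(L_j) \in \Qp^3$, so scaling does not affect it. From \eqref{eq:intptsconjugacy over Zp} and the associated orbit classification in the proof of Lemma \ref{lemma:loc.conjugacy of planes}, the $\SU_2(\Zp)$-orbits of primitive vectors in $\Zp^3$ of a given norm class modulo $(\Zp^\times)^2$ form a set of cardinality at most two. Consequently, after passing to a subsequence I may fix a plane $\Lambda_p \subset \Qp^4$ and elements $g_j \in \G(\Zp)$ (viewed as trivial at other places) such that
\[
g_j^{-1}\,\torus_{L_j}(\Qp)\, g_j \;=\; \torus_{\Lambda_p}(\Qp) \;=:\; T_p,
\]
and a further subsequence yields $g_j \to g^{(p)} \in \G(\Zp)$ by compactness. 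Since $g_j.\mu_j$ is $T_p$-invariant and $g_j.\mu_j \to g^{(p)}.\mu$ in the weak-$\ast$ topology, the measure $\mu$ is invariant under $(g^{(p)})^{-1} T_p g^{(p)}$, itself a $\Qp$-split rank-two torus; after redefining $T_p$ to be this conjugate, I obtain $T_p$-invariance of $\mu$. The same argument at $q$ yields $T_q$ and $T_q$-invariance, and $[T_p, T_q] = 1$ follows from the product decomposition $\G(\Q_S) = \G(\R) \times \G(\Qp) \times \G(\Q_q)$.

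For the last assertion, each of $T_p$ and $T_q$ is a $p$-adic (respectively $q$-adic) split torus of rank two and thus contains a cocompact free $\Z^2$-subgroup. Their direct sum provides $\mathbf{A} \cong \Z^4 \leq \mathbf{T}$. Class-$\mathcal{A}'$ness and factorwise ergodicity of $\mathbf{A}$ on each $X_{i,S}$ reduce, via the isogenies $\Psi_{i,\Lambda_p}$ and $\Psi_{i,\Lambda_q}$ and Lemma \ref{lemma:image of isogenies}, to the statement that a rank-one $\Z$-subgroup of a $\Qp$- or $\Q_q$-split torus of rank one acts ergodically on the associated $S$-arithmetic quotient of $\SU_2$ or $\SL_2$, which is a standard Hecke-theoretic mixing fact.

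The main obstacle lies in the uniformization step of the second paragraph: although the $p$-adic orders $\ord_p(a_i(L_j))$ may tend to infinity, so that direct finiteness of an integer-point orbit is unavailable, the projective invariance of the stabilizer reduces the classification to that of $\SU_2(\Zp)$-orbits on primitive vectors, which is finite. Combined with the compactness of $\G(\Zp)$, this produces a bounded conjugation bringing every $\torus_{L_j}(\Qp)$ into a common form, and the claimed $\mathbf{T}$-invariance of $\mu$ then follows by taking weak-$\ast$ limits.
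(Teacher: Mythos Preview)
Your strategy---conjugate each $\torus_{L_j}(\Q_p)$ to a common torus by some $g_j\in\G(\Z_p)$, extract a convergent subsequence of conjugators, and pass to the limit---differs from the paper's and is more roundabout. The paper simply observes that the matrices $A_{i,L_j}$ defining the maps $\Psi_{i,L_j}$ lie in the compact groups $\SL_4(\Z_p)$ or $\SL_3(\Z_p)$, extracts a subsequence along which $A_{i,L_j}\to A_i$, takes $\Lambda_p$ to be the plane spanned by the first two rows of $A_1$, and obtains $T_p=\torus_{\Lambda_p}(\Q_p)$ together with its splitness directly by continuity of the Klein map and of the $\Psi_i$'s. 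No conjugation is needed, and $T_p$-invariance of $\mu$ is immediate from the $\torus_{L_j}(\Q_p)$-invariance of $\mu_j$.

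Your argument, as written, has a genuine gap: you justify only the $\SU_2^2(\Z_p)$-components of $g_j$ via the orbit classification of primitive vectors, but say nothing about the four $\SL_2(\Z_p)$-components needed to conjugate each $\Psi_{i,L_j}(\BH_{L_j}(\Q_p))$ to $\Psi_{i,\Lambda_p}(\BH_{\Lambda_p}(\Q_p))$ \emph{compatibly} with the conjugation $h\mapsto(\alpha_j,\beta_j)^{-1}h(\alpha_j,\beta_j)$ on the first factor. Such change-of-basis matrices do exist (relating the $\Z_p$-bases encoded in $A_{i,L_j}$ and $A_{i,\Lambda_p}$, transported by $(\alpha_j,\beta_j)$), but producing them and checking they lie in $\SL_2(\Z_p)$ is tantamount to the paper's direct limit argument. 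Your final redefinition of $T_p$ as a $\G(\Z_p)$-conjugate likewise leaves unverified that the result is again of the specific form $\torus_{\Lambda_p'}(\Q_p)$ required by the lemma. Lastly, your worry that $\ord_p(a_i(L_j))$ may tend to infinity is unfounded: the hypothesis $-D_j\in(\F_p^\times)^2$ forces $p\nmid D_j=Q(a_i(L_j))$, so each $a_i(L_j)$ is $p$-adically primitive with fixed norm class in $\Z_p^\times/(\Z_p^\times)^2$, which is exactly what makes your orbit-finiteness argument applicable.
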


Here, the homomorphisms $\Psi_{1,\Lambda_p}, \Psi_{2,\Lambda_p}, \ldots$ can be defined as in Section \ref{section:acting group}.

For the general definition of the term \textit{class-$\mathcal{A}'$} we refer to \cite[Def.1.3]{EL-joining2}.
In our case it suffices to show that the group $\mathbb{H}_{\Lambda_p}(\Q_p)$ contains a subgroup $\mathbf{A}_p$ generated by some $h_1 \in \mathbb{H}_{a_1(\Lambda_p)}$ and $h_2 \in \mathbb{H}_{a_2(\Lambda_p)}$ each with eigenvalues $p^2,1,p^{-2}$ and that the same holds for $\mathbb{H}_{\Lambda_q}(\Q_q)$.
We remark that $\mathbf{A}_p$ (isomorphic to $\Z^2$) is mapped under each of the maps $\Psi_{1,\Lambda_p},\ldots,\Psi_{4,\Lambda_p}$ to a subgroup of rank one (and not two) by the discussion in Section \ref{section:acting group}.
The group $\mathbf{A}$ in the lemma is then simply the graph of $(\Psi_{1,\Lambda_p},\ldots,\Psi_{4,\Lambda_p})$ on the product $\mathbf{A}_p \times \mathbf{A}_q$.

\begin{proof}
By compactness we may assume that $A_{i,L_j} \to A_i \in \SL_4(\Zp)$ as $j \to \infty$ for all $i \in \set{1,2,3,4}$. 
Denote by $\Lambda_p$ (resp. $\tilde{\Lambda}_p$) the $\Q_p$-plane spanned by the first two rows of $A_1$ (resp. the last two rows of $A_2$). By continuity $a_1(\Lambda_p)\in \Z_p^3$ (with respect to the basis in $A_1$) is the limit of the sequence $a_1(L_j)$ and the same is true for $a_2(\Lambda_p)$.
By Proposition \ref{prop:splitting map} and by continuity of the Klein map we also have $\Phi(\tilde{\Lambda}_p) = [a_1(\Lambda_p),-a_2(\Lambda_p)]$ and hence $\tilde{\Lambda}_p = \Lambda_p^\perp$.

The admissability assumption on the planes $L_j$ yields that 
\begin{align*}
-Q(a_1(\Lambda_p)) = -Q(a_2(\Lambda_p))
\end{align*}
modulo $p$ is a non-zero square. In particular, the proof of Lemma \ref{lemma:reason for congruencecond.} shows that the stabilizer group $\mathbb{H}_{\Lambda_p}(\Q_p)<\SU_2^2(\Q_p) \cong \SL_2^2(\Q_p)$ is a maximal split torus (maximal as the rank is two).

Furthermore, as in Section \ref{section:def shapes} we obtain four binary forms (defined over~$\Z_p$) using $A_1,A_2,A_3,A_4$ each of which represents a restriction of $Q$ to a $\Z_p$-submodule of rank two (e.g.~the restriction of $Q$ to $\Lambda_p(\Z_p)$ uses the basis contained in $A_1$).
By the above these forms have discriminant in $-(\Z_p^\times)^2$ and are hence isotropic by Hensel's lemma.
This shows that $T_p$ is diagonalizable.

The same discussion applies to define $\Lambda_q$ (along a further subsequence) and to see that the obtained group $T_q$ is diagonalizable.
Since the measure $\mu_j$ is $\torus_{L_j}(\Q_p \times \Q_q)$-invariant, it follows directly that $\mu$ is $\mathbf{T}$-invariant.
The existence of the subgroup $\mathbf{A}_p$ (and hence $\mathbf{A}$ as in the lemma) follows from the fact that any maximal torus in $\SU_2^2(\Q_p)\cong \SL_2^2(\Q_p)$ is conjugate to the diagonal one where one can consider the subgroup generated by $(\diag(p,p^{-1}),\id)$ and $(\id,\diag(p,p^{-1}))$.
Indeed, this subgroup $\mathbf{A}_p$ acts ergodically on $X_{1,S}\times X_{2,S}$ by Mautner's phenomenon and by the fact that $\SU_2^2(\Q_p)$ acts ergodically by strong approximation. A similar argument for ergodicity applies in the other coordinates.
\end{proof}

\begin{proof}[Proof of Theorem \ref{thm:orbit version}]
As mentioned, we now want to apply the joinings classification in \cite[Thm.1.4]{EL-joining2}. 
To this end, recall that $\SL_2$ and $\SU_2$ are simply-connected (\cite[p.64]{platonov}) so that $\BG$ is also simply-connected. In particular, it follows that $X_S$ is saturated by unipotents in the sense of \cite{EL-joining2} i.e.~the subgroup generated by the unipotent elements acts ergodically. 
Let $\mathbf{A}$ be a subgroup as in Lemma \ref{lemma:inv.of the limit}. Then almost every ergodic component of $\mu$ is again a joining for the $\Z^4$-action on $X_S$ given by $\mathbf{A}$ (as the Haar measures are ergodic for the $\mathbf{A}$-action). 
Let $\nu$ be one such ergodic component. It is sufficient to show that $\nu =m_{X_S} $ to prove the theorem.

Moreover, by Corollary~1.5 in \cite{EL-joining2} we may as well show that the projection~$\nu_{k,\ell}$ of $\nu$ to any product of two factors $X_{k,S} \times X_{\ell,S}$ for $k<\ell$ is the trivial joining. Let $\mathbb{G}_k,\mathbb{G}_\ell \in \set{\SL_2,\SU_2}$ be the corresponding $\Q$-groups. 
By Theorem~1.4 in \cite{EL-joining2} there exists a linear algebraic group $\mathbb{M}<\G_k\times\G_\ell$ defined over $\Q$, a finite-index subgroup $M'<\mathbb{M}(\Q_S)$ and some $h \in (\G_k\times\G_{\ell})(\Q_S)$ so that $\nu_{k,\ell}$ is the Haar measure on $\Gamma M' h$ where $\Gamma = (\G_k\times\G_{\ell})(\Z^S)$.
The measure $\nu_{k,\ell}$ is exactly invariant under the subgroup $h^{-1}M' h$, which has finite index in $M = M_\infty \times M_p \times M_q := h^{-1}\BM(\Q_S)h$.
Since $h^{-1}M'h$ contains the projection of $\mathbf{A}$ to the $(k,\ell)$-th coordinate pair, the subgroup $M$ contains the projection of the Zariski-closure $\mathbf{T}$ of $\mathbf{A}$.

Assuming for a moment that $\BM = \G_k \times \G_{\ell}$ then for instance \cite[Cor.~6.7, p.~534]{boreltits1} proceeds to show that $\BM(\Q_S)$ does not have any proper, finite-index subgroup (as $\BM$ is simply-connected). In particular, $h^{-1}M'h = \G_k(\Q_S)\times\G_{\ell}(\Q_S)$ and $\nu_{k,\ell}$ is the trivial joining.

So suppose that $\BM \neq \G_k \times \G_{\ell}$.
As $\nu_{k,\ell}$ is a joining and as $\G_k$ or $\G_{\ell}$ are both simply-connected groups, the projections of $\mathbb{M}$ to $\G_k$ and $\G_{\ell}$ are isomorphisms.
In other words, $\mathbb{M}$ is the graph of some isomorphism between $\G_k$ and $\G_{\ell}$ defined over $\Q$ and in particular, $M_p$ is the graph of an isomorphism between $\G_k(\Q_p)$ and $\G_{\ell}(\Q_p)$.
To obtain a contradiction, we distinguish three cases.

\textsc{Case 1: $k,\ell\leq 2$.}
By assumption, we have $M_p \cong \SU_2(\Q_p)$ so that any maximal torus in $M_p$ has rank $1$.
On the other hand, by Lemma~\ref{lemma:inv.of the limit} the subgroup $M_p<\SU_2^2(\Qp)$ contains the torus $\mathbb{H}_{\Lambda_p}(\Q_p)$, which is of rank two (see Proposition~\ref{prop:splitting map} or Lemma~\ref{lemma:inv.of the limit}).

\textsc{Case 2: $k\leq 2, \ell \geq 3$.}
In this case there is no isomorphism between $\G_k = \SU_2$ and $\G_{\ell} = \SL_2$ as $\SU_2(\R)$ is compact and $\SL_2(\R)$ is not.

\textsc{Case 3: $k,\ell\geq 3$.}
We exhibit elements of $M_p<\SL_2(\Q_p)^2$ of the form $(\mathrm{Id},a)$ for some non-trivial $a \in \SL_2(\Q_p)$ contradicting the assumption.
\begin{itemize}
\item If $(k,\ell) = (3,4)$ we can consider $(\Psi_{1,\Lambda_p}(h),\Psi_{2,\Lambda_p}(h)) \in M_p$ for some non-trivial $h \in \mathbb{H}^{\mathrm{pt}}_{\Lambda_p}(\Qp)$.
This element is of the desired form $(\mathrm{Id},a)$ as $\Psi_{1,\Lambda_p}(h)= \text{Id}$ for any $h\in\mathbb{H}^{\mathrm{pt}}_{\Lambda_p}(\Qp)$ and as $\Psi_{2,\Lambda_p}(h)$ is non-trivial.

\item If $(k,\ell) = (3,5)$ we can consider $(\Psi_{1,\Lambda_p}(h),\Psi_{3,\Lambda_p}(h)) \in M_p$ for some non-trivial $h \in \mathbb{H}^{\mathrm{pt}}_{\Lambda_p}(\Qp)$ so that again $\Psi_{1,\Lambda_p}(h) = \id$.
Recall that by Lemma~\ref{lemma:pointwisestab} (the ``$45^\circ$-degree'' twist) the pointwise stabilizer of $\Lambda_p$ acts non-trivially on the orthogonal complement of $a_1(\Lambda_p)$ so that $a=\Psi_{3,\Lambda_p}(h)$ is in fact non-trivial.
The cases $(k,\ell) = (3,6),(4,5),(4,6)$ are analogous (in particular using Lemma~\ref{lemma:pointwisestab} again).
\item If $(k,\ell) = (5,6)$ we can consider $(\Psi_{3,\Lambda_p}(h),\Psi_{4,\Lambda_p}(h)) \in M_p$ for some non-trivial $h= (\id,h_2)$ where $h_2 \in \mathbb{H}_{a_2(\Lambda_p)}(\Qp)$.
\end{itemize}
This concludes the proof.
\end{proof}

\section{Equidistribution of oriented planes}

In this section, we discuss an extension of Theorem~\ref{thm:main} which takes into account the orientation of the subspaces.
In particular, we will associate to any oriented rational subspace four actual CM points avoiding the additional identification in the introduction.

\subsection{Homogenization of $\Gr$}\label{sec:wedgespaces}

Let $\purewedges$ be the homogeneous variety of pure $2$-wedges for affine four space; we see $\purewedges$ as a subvariety of the $6$-dimensional affine space via the choice of basis
\begin{align*}
e_1 \wedge e_2, e_1 \wedge e_3, e_1 \wedge e_4, e_2 \wedge e_3, e_2 \wedge e_4, e_3 \wedge e_4.
\end{align*}
One can view $\purewedges$ as the homogenization of the projective variety $\Gr$ and in particular we have a morphism of varieties
\begin{align*}
\homog: \purewedges \setminus \{0\} \to \Gr.
\end{align*}
As a map on $\BK$-points for a field $\BK$, the morphism $\phi$ sends a pure wedge $v_1 \wedge v_2$ for $v_1,v_2 \in \BK^4$ linearly independent to the subspace spanned by $v_1$ and $v_2$.

Furthermore, setting
\begin{align*}
\disc(v_1 \wedge v_2) = \det \begin{pmatrix}
(v_1,v_1) & (v_1,v_2) \\ (v_2,v_1) & (v_2,v_2)
\end{pmatrix}
\end{align*}
for any pure wedge $v_1 \wedge v_2$ we obtain a morphism of varieties defined over $\Q$ (also called the discriminant)
\begin{align*}
\disc: \purewedges \to \G_a.
\end{align*}
We denote by $\purend \subset \purewedges$ the Zariski open subset where $\disc \neq 0$ or equivalently $\purend = \homog^{-1}(\Grnd)$.

The level set $\disc = 1$ in $\purewedges$ is denoted by $\purewedges^1$.
The compact manifold $\purewedges^1(\R)$ is naturally the manifold of \emph{oriented} two-dimensional subspaces of $\R^4$.
Indeed, the fiber of any subspace $L \in \Gr(\R)$ under the surjective map $\homog: \purewedges^1(\R) \to \Gr(\R)$ consists of two points, one for each choice of orientation on $L$.
The action of $\SU_2^2(\R)$ on $\purewedges^1(\R)$ induced from the action on $\quat(\R) \simeq \R^4$ is transitive with connected stabilizers (as opposed to the action on the Grassmannian).
We equip $\purewedges^1(\R)$ with the unique invariant probability measure.

The integral structure on $\purewedges$ is inherited from the integral structure on the wedge: We let $\purewedges(\Z)$ be the set of pure wedges in $\bigwedge^2 \quat(\Z)$ and define $\purewedges(\Z_p)$ similarly for any prime $p$.
For any integer $D\geq 1$ we set
\begin{align}
\mathcal{R}'_D = \{w \in \purewedges(\Z): \disc(w) = D,\ w \text{ primitive}\}
\end{align}
and view it as a subset of $\purewedges(\R)$. As such, it may be projected onto $\purewedges^1(\R)$ by division with $\sqrt{D}$.

Write $\pureprim(\Z)$ for the set of primitive wedges in $\purewedges(\Z)$; any such wedge is of the form $w = v_1\wedge v_2$ where $v_1,v_2$ forms a basis of the two-dimensional subspace
\begin{align*}
L_w(\Q) = \Q v_1 + \Q v_2 = \{v \in \Q^4: v \wedge w = 0 \}.
\end{align*}
satisfying $L_w(\Z) = \Z v_1 + \Z v_2$.
By definition, $\disc(w) = \disc(L_w)$. The map 
\begin{align}\label{eq:orthcompl}
w \in \pureprim(\Z) \mapsto L_w \in \Gr(\Q)
\end{align}
is surjective and the fiber of each subspace consists of two points (each corresponding to a choice of orientation). Explicitly, if $L \in \Gr(\Q)$ and $v_1,v_2$ is a $\Z$-basis of $L(\Z)$, then $\pm v_1 \wedge v_2$ are the two points in the fiber of $L$.

\subsection{Theorem for oriented subspaces}

The orthogonal complement construction on the set of primitive wedges $\pureprim(\Z)\subset\purewedges(\Z)$ may be defined as follows, and descends to the usual orthogonal complement construction on $\Gr(\Q)$ under the map in \eqref{eq:orthcompl}.
For $w \in \pureprim(\Z)$ the vector $w_\perp \in \pureprim(\Z)$ is the unique vector corresponding to $L_w^\perp(\Z)$ with $w \wedge w_\perp$ being a positive multiple of $e_1 \wedge
e_2 \wedge e_3 \wedge e_4$.
In words, we choose $w_\perp$ so that the orientations of $w$ and $w_\perp$ are compatible.

The shape $[w]$ for $w = v_1 \wedge v_2 \in \pureprim(\Z)$ with $v_1,v_2 \in \Z^4$ is the \emph{proper} equivalence class of the quadratic form $(x,y) \mapsto Q(xv_1+ yv_2)$. In particular, we can view $[w]$ as a point in
\begin{align*}
Y_0(1) =  \lrquot{\SL_2(\Z)}{\SL_2(\R)}{\SO_2(\R)}.
\end{align*}
As before, the Klein map yields two points $a_1(w),a_2(w) \in \quat_0(\Z)$ (now with no ambuigity) for any $w \in \pureprim(\Z)$ -- we will briefly discuss this below.
The shape $[\Lambda_{a_1(w)}],[\Lambda_{a_2(w)}]$ of the lattices in the respective orthogonal complements can also be defined as points in $Y_0(1)$ (see also the definition of the shape in \cite{AES3D}).

\begin{theorem}\label{thm:oriented}
Let $p,q$ be two distinct odd primes.
The set of tuples
\begin{align*}
\big\{(\tfrac{1}{\sqrt{D}}w, [w], [w_\perp],[\Lambda_{a_1(w)}],[\Lambda_{a_2(w)}]): w \in \mathcal{R}_D'\big\}
\end{align*}
is equidistributed in $\purewedges^1(\R) \times Y_0(1)^4$ as $D \in \mathbb{D}$ goes to infinity while satisfying the additional condition that $-D$ is a non-zero square modulo $p$ and modulo $q$.
\end{theorem}

Except for the necessary discussion of the Klein map the theorem follows from Theorem~\ref{thm:orbit version} in a fashion similar to Theorem~\ref{thm:main}.

\subsection{The Klein map and orientation}\label{sec:kleinoriented}

We give the analogue of the Klein map (cf.~Proposition~\ref{prop:splitting map}) in this setup.
Roughly speaking, we need to explain in which sense the Klein map could capture the orientation of a subspace.

So suppose first that $L \in \Gr(\R)$ and that $v_1,v_2$ is an orthonormal basis of $L$.
Defining $a_1(L),a_2(L)$ as in \eqref{eq:associated intpts} we already obtain two points in $\mathbb{S}^2$. 
Note that reversing the order of $v_1,v_2$ changes the sign on $a_1(L)$ and $a_2(L)$.
We now explain how the choice of sign in $\pm (a_1(L),a_2(L))$ defines the orientation on $L$.
Proposition~\ref{prop:splitting map} easily implies that $L$ is invariant under left-multiplication by $a_1(L)$ (which is the same as right-multiplication by $a_2(L)$). As $a_1(L)^2 = -1$, this left-multiplication is a rotation by 90 degrees and the rotation by 90 degrees in the opposite direction is given by multiplication with $-a_1(L)$.
Therefore, fixing a non-zero vector $v\in L$ two bases with distinct orientation are given by $v,a_1(L)v$ resp.~$v,-a_1(L)v$.
In the following, we make this discussion more precise and extend it to any base field.

The action of $\SU_2^2$ on $\quat$ induces an action of $\SU_2^2$ on $\purewedges$. 
Under the morphism $\homog$, this action descends to the aforementioned action on $\Gr$.
We remark that for any $w \in \purend(\BK)$ the stabilizer $\BK$-subgroup $\BH_w< \SU_2^2$ is equal to $\BH_L$ where $L = \homog(w)$.

We define $\kleinwedge$ to be the subvariety of $\quat_0 \times \quat_0$ given by the equation $\Nr(x) = \Nr(y)$ and by $\kleinnondeg$ the Zariski open subset where $\Nr(x) \neq 0$.
We also denote by $a_1(w)$ resp.~$a_2(w)$ the points defined in \eqref{eq:associated intpts} for any $w = v_1 \wedge v_2 \in \purewedges(\BK)$.
These do not depend on any choices (as opposed to the definition for subspaces) and satisfy
\begin{align*}
\disc(w) = \Nr(a_1(w)) = \Nr(a_2(w))
\end{align*}
(see Lemma~\ref{lemma:formula lengths}).

\begin{corollary}[Affine version of the Klein map]
The map
\begin{align*}
\kleinmapwedge: \purend(\BK) \to \kleinnondeg(\BK),\ w \mapsto (a_1(w),a_2(w))
\end{align*}
is a well-defined bijection and equivariant for the $\SU_2^2(\BK)$-actions. Furthermore, it has the property that $\kleinmapwedge(\lambda w) = \lambda \kleinmapwedge(w)$ for any $w \in \purend(\BK)$ and $\lambda \in \BK^\times$.
\end{corollary}

\begin{proof}
These claims are straightforward to deduce from the calculations in the proof of Proposition~\ref{prop:splitting map} except for the bijectivity which we verify here by constructing an inverse.
Let $(a_1,a_2) \in \kleinnondeg(\BK)$, let $L = \{g \in \quat(\BK): a_1 g = g a_2\}$ be the plane $a_1,a_2$ define and let $g \in L(\BK)$ be invertible (as $L$ is non-degenerate, such a $g$ exists). In particular, $g$ satisfies $a_1 g = g a_2$ and $a_1g \in L(\BK)$. We define
\begin{align*}
\Psi(a_1,a_2) = \tfrac{1}{\Nr(g)}(a_1 g \wedge g)= \tfrac{1}{\Nr(g)}(g a_2 \wedge g).
\end{align*}
We leave it to the reader to verify that $\Psi(a_1,a_2)$ does not depend on the above choice of $g$, that $\kleinmapwedge \circ \Psi = \id_{\kleinnondeg(\BK)}$, and that $\Psi \circ \kleinmapwedge = \id_{\purend(\BK)}$.
%
\end{proof}

In particular, the Klein map induces an equivariant bijection
\begin{align*}
\kleinmapwedge: \purewedges(\R) \setminus \{0\} \to \kleinwedge(\R) \setminus \{(0,0)\}
\end{align*}
which induces an equivariant bijection $\purewedges^1(\R) \to \mathbb{S}^2 \times \mathbb{S}^2$.
In words, the manifold of oriented two-dimensional subspaces of $\R^4$ can be identified with $\mathbb{S}^2 \times \mathbb{S}^2$.
\section{Further comments and relations to class groups}\label{section:furthercomments}

In this section we formulate an arithmetic interpretation of Theorem~\ref{thm:main} which permits (possible) generalizations thereof. 
Let us first describe how the class group $\Cl(\mathcal{O}_D)$ acts on the projections of the collections appearing in Theorem \ref{thm:main} to each factor.
Here, $D\in \N$ is square-free and $\mathcal{O}_D$ is the maximal order in $\Q(\sqrt{-D})$.

\begin{itemize}
\item First, recall that for $D \in \BD$ square-free the class group $\Cl(\mathcal{O}_D)$ acts on the set of CM-points $\mathcal{CM}_D$ (as they are simply ideal classes in $\Cl(\mathcal{O}_D)$).
\item Similarly, the quotient
$\widetilde{\mathcal{R}_3}(D)$ of the set of integer points of norm $\sqrt{D}$ by $\SO_3(\Z)$ carries a transitive action\footnote{To be more precise, denote by $\mathcal{O}_\quat$ the (maximal) order of Hurwitz quaternions in $\quat(\Q)$. For any $x,y \in \quat_0$ with $\Nr(x) = \Nr(y) = D$ consider
\begin{align*}
\Lambda_{x \to y} 
= \setc{\lambda \in \mathcal{O}_\quat}{x\lambda = \lambda y}
= \mathcal{O}_\quat \cap \Phi^{-1}([x,y])
\end{align*}
which is a left-$\mathcal{O}_x$-ideal where $\mathcal{O}_x = \Q[x]\cap \mathcal{O}_\quat\ \widehat{=}\ \mathcal{O}_D$. 
The element of the class group mapping $x$ to $y$ is then given by the class of $\Lambda_{x \to y}$ (up to finite index issues) -- see \cite[Prop.~3.5]{Linnikthmexpander}.} 
of the class group (see \cite[Sec.~3]{Linnikthmexpander}).
We will identify the set $\widetilde{\mathcal{R}_3}(D)$ with the image in $\lquot{\SO_3(\Z)}{\mathbb{S}^2}$ after projection.
The Klein map then yields an action of $\Cl(\mathcal{O}_D)^2$ on the set $\mathcal{R}_D$ (or more precisely a finite-to-one quotient thereof).
\end{itemize}
All of the above actions of the class group $\Cl(\mathcal{O}_D)$ will be denoted by $[\mathfrak{a}].(\cdot)$ for any element $[\Fa] \in \Cl(\mathcal{O}_D)$.


\subsection{Monomials in ideals and simultaneous equidistribution}

Using these actions we may now give an analogous formulation of Theorem \ref{thm:main} which may be thought of as an arithmetic (rather than geometric) interpretation of it.

\begin{theorem}[Arithmetic version]\label{thm:arithmeticversion}
\ifsquarefree
Let $p$ and $q$ be two distinct odd primes. For any square-free $D\in \BD$ fix basepoints $v^{(D)},w^{(D)} \in \widetilde{\mathcal{R}_3}(D)$ as well as $\cm{z}_1^{(D)}$, $\cm{z}_2^{(D)}$, $\cm{z}_3^{(D)}$, $\cm{z}_4^{(D)}$ in  $\mathcal{CM}_D$. Then the subsets
\begin{align*}
\left\lbrace
\big([\Fa].v^{(D)},[\Fb].w^{(D)},[\Fa][\Fb].\cm{z}_1^{(D)},[\Fa][\Fb]^{-1}.\cm{z}_2^{(D)},[\Fa]^2.\cm{z}_3^{(D)},[\Fb]^2.\cm{z}_4^{(D)}\big): 
[\Fa],[\Fb]\in \Cl(\mathcal{O}_D)
\right\rbrace
\end{align*}
of $(\lquot{\SO_3(\Z)}{\mathbb{S}^2})^2 \times \mathcal{X}_2^4$ equidistribute as $D \to \infty$ while $D$ satisfies the assumptions of Theorem \ref{thm:main}.
\else
Let $p$ and $q$ be two distinct odd primes. For any square-free $D\in \N$ which is not of the form\footnote{This guarantees that $\widetilde{\mathcal{R}_3}(D)$ is non-empty.} $4^a(8b+7)$ for integers $a,b$ fix basepoints $v^{(D)},w^{(D)} \in \widetilde{\mathcal{R}_3}(D)$ as well as $\cm{z}_1^{(D)}$, $\cm{z}_2^{(D)}$, $\cm{z}_3^{(D)}$, $\cm{z}_4^{(D)}$ in  $\mathcal{CM}_D$. Then the subsets
\begin{align*}
\left\lbrace
\big([\Fa].v^{(D)},[\Fb].w^{(D)},[\Fa][\Fb].\cm{z}_1^{(D)},[\Fa][\Fb]^{-1}.\cm{z}_2^{(D)},[\Fa]^2.\cm{z}_3^{(D)},[\Fb]^2.\cm{z}_4^{(D)}\big): 
[\Fa],[\Fb]\in \Cl(\mathcal{O}_D)
\right\rbrace
\end{align*}
of $(\lquot{\SO_3(\Z)}{\mathbb{S}^2})^2 \times Y_0(1)^4$ equidistribute when $D$ goes to infinity while $D$ is square-free and satisfies $\legendre{-D}{p} = \legendre{-D}{q} = 1$.
\fi
\end{theorem}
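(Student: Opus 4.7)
The plan is to derive Theorem \ref{thm:arithmeticversion} from Theorem \ref{thm:main} by identifying the class group orbits described in the statement with (the projections of) the packets $\mathcal{P}(L)$ for suitable base planes $L \in \mathcal{R}_D$. For each admissible $D$, I would fix a plane $L = L^{(D)} \in \mathcal{R}_D$ whose pair $(a_1(L), a_2(L))$ projects to the given basepoints $v^{(D)}, w^{(D)} \in \widetilde{\mathcal{R}_3}(D)$. Since $\Cl(\mathcal{O}_D)$ acts transitively on $\widetilde{\mathcal{R}_3}(D)$ (as recalled in Section \ref{section:furthercomments}), every pair $([\mathfrak{a}].v^{(D)}, [\mathfrak{b}].w^{(D)})$ arises from a unique (up to $\SO_3(\Z)^2$) pair of integer points of norm $\sqrt{D}$, and hence via the Klein map (Proposition \ref{prop:splitting map}) from a plane $L_{\mathfrak{a},\mathfrak{b}} \in \mathcal{R}_D$. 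Up to the finite-to-one issues stemming from pair-primitivity and simultaneous sign changes, this realizes the $\Cl(\mathcal{O}_D)^2$-set on the left as the quotient of $\mathcal{R}_D$ by $\SO_3(\Z)^2$.

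Next I would match the class group action on the four CM-factors using the isogenies $\Psi_{i,L}$ from Section \ref{section:acting group} and the identification of the class group of $\mathbb{H}_{a_i(L)}$ with $\Cl(\mathcal{O}_D)$ (see e.g.\ \cite[Lemma 7.2]{W18}). The maps $\Psi_{1,L}$ and $\Psi_{2,L}$ are $\Q$-isomorphisms on each factor $\mathbb{H}_{a_i(L)}$ (Lemma \ref{lemma:image of isogenies}), so on the Picard side they intertwine the $\Cl(\mathcal{O}_D)^2$-action with multiplication by $[\mathfrak{a}][\mathfrak{b}]$ on $\cm{z}_1^{(D)} = [L^{(D)}(\Z)]$; the sign flip on $a_2$ occurring when passing to $L^\perp$ (Proposition \ref{prop:splitting map}) translates to inversion in the second factor and yields $[\mathfrak{a}][\mathfrak{b}]^{-1}$ on $\cm{z}_2^{(D)} = [L^{(D),\perp}(\Z)]$. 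The maps $\Psi_{3,L}$ and $\Psi_{4,L}$ factor through the two-to-one isogeny $\SU_2 \to \SO_3$, which on class groups corresponds to squaring; thus the class group action on $\cm{z}_3^{(D)} = [\Lambda_{a_1(L^{(D)})}]$ is by $[\mathfrak{a}]^2$ and on $\cm{z}_4^{(D)} = [\Lambda_{a_2(L^{(D)})}]$ by $[\mathfrak{b}]^2$, matching the statement.

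With these identifications, the set appearing in Theorem \ref{thm:arithmeticversion} is precisely the image of the $\SO_3(\Z)^2$-quotient of the packet $\mathcal{P}(L^{(D)}) \subset \lquot{\SU_2^2(\Z)}{\mathcal{J}_D}$ under the natural continuous map into $(\lquot{\SO_3(\Z)}{\mathbb{S}^2})^2 \times \mathcal{X}_2^4$. By the decomposition $\mathcal{J}_D = \bigsqcup_L \mathcal{P}(L)$ and the proof of Theorem \ref{thm:main}, each individual packet $\mathcal{P}(L^{(D)})$ equidistributes in the quotient $Y$ of \eqref{eq:packet} along admissible $D$ (and the exceptional planes from Proposition \ref{prop:collection intpts} contribute a negligible $D^{o(1)}$ error). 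Pushing forward along the continuous quotient map $Y \to (\lquot{\SO_3(\Z)}{\mathbb{S}^2})^2 \times \mathcal{X}_2^4$ yields the claimed equidistribution.

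The main obstacle is the careful bookkeeping of the various two-to-one issues: (i) the kernel $\{\pm 1\}$ of $\SU_2 \to \SO_3$, responsible for the squaring on the accidental CM factors; (ii) the quotient $\mathcal{X}_2 = \lquot{\PGL_2(\Z)}{\mathbb{H}^2}$ by reflection, which allows the inverse to appear as $[\mathfrak{a}][\mathfrak{b}]^{-1}$; and (iii) the simultaneous sign ambiguity in $(a_1(L), a_2(L))$ together with pair-primitivity at the prime $2$. Verifying that, combined, these give a well-defined and free enough $\Cl(\mathcal{O}_D)^2$-action matching the packet $\mathcal{P}(L^{(D)})$ up to a bounded (and hence equidistribution-preserving) multiplicity is the only delicate part; everything else is a direct transfer from Theorem \ref{thm:main}.
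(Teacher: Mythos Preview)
Your approach is correct and aligns with the paper's own treatment: the paper does not give a formal proof of Theorem~\ref{thm:arithmeticversion} but presents it as an arithmetic reformulation of Theorem~\ref{thm:orbit version}, with the monomials $[\mathfrak{a}][\mathfrak{b}]^{\pm 1}$, $[\mathfrak{a}]^2$, $[\mathfrak{b}]^2$ arising from the explicit computation of the isogenies $\Psi_{i,L}$ carried out in Appendix~\ref{section:appendixA}. One detail deserves an extra line: the statement allows the CM basepoints $z_i^{(D)}$ to be chosen independently of the plane $L^{(D)}$, so the resulting set need not be a geometric packet $\mathcal{P}(L)$ on the nose; this is absorbed by replacing each $\Psi_{i,L}$ with a $\Q$-conjugate (yielding a $\Q$-isomorphic torus inside $\mathbb{G}$), to which the joinings argument of Section~\ref{section:proofdynresult} applies verbatim.
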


Note the symmetry in the actions here.
Clearly, the acting element in the first (resp.~the second) coordinate in $Y_0(1)$ is the quotient (resp.~the product) of the two acting elements in the coordinates in $\widetilde{\mathcal{R}_3}(D)$.
Also, the acting element $[\Fa]^2$ in the third coordinate in $Y_0(1)$ is given by the product of the acting elements $[\Fa][\Fb]^{-1}$ and $[\Fa][\Fb]$ in the first resp.~second coordinate in $Y_0(1)$. Similarly, the acting element $[\Fb]^2$ in the fourth coordinate in $Y_0(1)$ is the quotient of $[\Fa][\Fb]$ and $[\Fa][\Fb]^{-1}$.

The authors find it to be a pleasing coincidence that the objects in Theorem~\ref{thm:main} obtained by geometric constructions (the Klein map and the orthogonal complement) admit a description as in Theorem \ref{thm:arithmeticversion}.
Observe that the relation between the basepoints is determined by these geometric constructions.
We will hint at this relation in Appendix~\ref{section:appendixA}.

\subsection{Extensions}\label{section:extensions}

The above arithmetic game may be extended. For instance, one can change the $45^\circ$-degree picture (see Lemma \ref{lemma:pointwisestab}) and show that the triples
\begin{align*}
\big([\Fa].\cm{z}_1^{(D)},[\Fb].\cm{z}_2^{(D)},[\Fa][\Fb]^3.\cm{z}_3^{(D)})
\end{align*}
for $[\Fa],[\Fb] \in \Cl(\mathcal{O}_D)$ equidistribute where $\cm{z}_1^{(D)},\cm{z}_2^{(D)},\cm{z}_3^{(D)}$ denotes any choice of basepoints.
Another interesting case (as first studied by M.~Bhargava) concerns the triples
\begin{align*}
\big([\Fa].\cm{z}_1^{(D)},[\Fb].\cm{z}_2^{(D)},([\Fa][\Fb])^{-1}.\cm{z}_3^{(D)}).
\end{align*}
Here one can prove equidistribution under weakened congruence conditions. This will appear in an upcoming preprint \cite{ELM} of the second-named author with E.~Lindenstrauss and Ph.~Michel. 

Given $k \in \N$ one could also ask about the distribution of the set of tuples
\begin{align}\label{eq:equidistribution of powers}
\big([\Fa].\cm{z}_1^{(D)},[\Fa]^2.\cm{z}_2^{(D)},\ldots,[\Fa]^k.\cm{z}_k^{(D)}\big)\in Y_0(1)^k
\end{align}
for $[\Fa] \in \Cl(\mathcal{O}_D)$ and any fixed choice of basepoints $\cm{z}_1^{(D)},\ldots,\cm{z}_k^{(D)} \in \mathcal{CM}_D$.
The difficulty here lies in the individual equidistribution as there is no sufficient quantitative control on the $k$-torsion of the class group.
Assuming individual equidistribution, the joinings classification in \cite{EL-joining2} can be used to show equidistribution of these tuples under sufficient congruence conditions.
Such a theorem will appear in \cite{ELM} in the case of $k=2$ (as there is control on the $2$-torsion).

One may combine this theorem with the problems from the beginning of this subsection (and the like) to obtain an equidistribution statement of the following kind.
If $n = (n_1,n_2) \in \Z^2$ is primitive with $n_1 \neq 0 \neq n_2$ then the tuples
\begin{align*}
\big(	[\Fa].\cm{z}_1^{(D)},[\Fb].\cm{z}_2^{(D)},
[\Fa]^{n_1}[\Fb]^{n_2}.\cm{z}_3^{(D)},
[\Fa]^{2n_1}[\Fb]^{2n_2}.\cm{z}_4^{(D)}	 \big)
\in Y_0(1)^4
\end{align*}
for $[\Fa],[\Fb] \in \Cl(\mathcal{O}_D)$ and any fixed choice of basepoints $\cm{z}_1^{(D)},\cm{z}_2^{(D)},\cm{z}_3^{(D)},\cm{z}_4^{(D)} \in \mathcal{CM}_D$ equidistribute as $D \to \infty$ under sufficient congruence conditions.
Note that one can apply \cite[Cor.~1.4]{EL-joining2} in order to generalize this to any finite number of primitive vectors (weights) $n \in \Z^2$ as long as no two vectors are equal or opposite to each other\footnote{An elementary argument shows that the sets $\left\lbrace\big(
[\Fa].\cm{z}^{(D)},[\Fa]^{-1}.\cm{z}^{(D)}\big): [\Fa] \in \Cl(\mathcal{O}_D)\right\rbrace$ for a basepoint $\cm{z}^{(D)}\in \mathcal{CM}_D$
are not equidistributed in $Y_0(1)^2$ as $D \to \infty$.}.

\subsection{Subcollections}

Given any subset $\mathcal{S}_D \subset \Cl(\mathcal{O}_D)^2$ for every $D$ one could enquire about equidistribution of the tuples considered in Theorem \ref{thm:main} or Section~\ref{section:extensions} when restricted to $\mathcal{S}_D$.
Let us discuss this question only in the context of Theorem~\ref{thm:arithmeticversion} and in a concrete example here.

Motivated by the mixing conjecture of Michel and Venkatesh fix an ideal class $[\mathfrak{b}] \in \Cl(\mathcal{O}_D)$ and consider the subset
\begin{align*}
\mathcal{S}_D = \{ ([\Fa],[\Fa][\Fb]): \Fa \in \Cl(\mathcal{O}_D) \} \subset \Cl(\mathcal{O}_D)^2.
\end{align*}
Given the collections from Theorem \ref{thm:arithmeticversion} for equal basepoints $v^{(D)} = w^{(D)}$ and $\cm{z}_1^{(D)} = \ldots = \cm{z}_4^{(D)} = \cm{z}^{(D)}$ one obtains along the subset $\mathcal{S}_D$ the finite set of tuples of the form
\begin{align*}
\big([\Fa].v^{(D)},[\Fa][\Fb].v^{(D)},
[\Fa]^2[\Fb].\cm{z}^{(D)},[\Fb]^{-1}.\cm{z}^{(D)},
[\Fa]^2.\cm{z}^{(D)},[\Fa]^2[\Fb]^2.\cm{z}^{(D)}\big)
\end{align*}
for $[\Fa]\in \Cl(\mathcal{O}_D)$.
Clearly, equidistribution in the fourth component is impossible as only one point in $Y_0(1)$ is considered.
Recent work of Khayutin \cite{ilyaJointCM} yields equidistribution under sufficient assumptions on $[\Fb]$ and on the quadratic field $\Q(\sqrt{-D})$.

\section{Glue groups}\label{section:glue groups}

In this section we formulate a further strengthening of Theorem \ref{thm:main} in terms of glue groups, whose definition we now recall.

The \emph{dual lattice} of a lattice $\Lambda \subset \Q^n$ is defined as
\begin{align*}
\dual{\Lambda} = \setc{v \in \Lambda \otimes \Q}{\scalara{v}{w} \in \Z \text{ for all } w \in \Lambda } \cong \Hom(\Lambda,\Z)
\end{align*}
where $\scalara{\cdot}{\cdot}$ is standard Euclidean inner product.
When $\Lambda$ is integral (i.e.~$\scalara{\cdot}{\cdot}$ restricted to $\Lambda$ takes values in $\Z$), then $\dual{\Lambda}$ contains $\Lambda$ and the \emph{glue group} (or \emph{discriminant group}) of $\Lambda$
\begin{align*}
\glue(\Lambda) = \rquot{\dual{\Lambda}}{\Lambda}
\end{align*}
is a finite abelian group of order $\disc(\Lambda)$. 
A glue group comes with a naturally attached binary form
\begin{align*}
\fform{\cdot}{\cdot}: \glue(\Lambda)  \times \glue(\Lambda)  &\to \rquot{\Q}{\Z},\\
\fform{v + \Lambda}{w + \Lambda} &= \scalara{v}{w} + \Z
\end{align*}
called the \emph{fractional form} of the glue group.

We remark that the name ``glue'' originates from the question whether or not two lattices  $\Lambda_1,\Lambda_2$ can be glued together in the sense that there is a unimodular lattice $\Lambda$ and an embedding of the orthogonal sum $\Lambda_1\oplus \Lambda_2 \hookrightarrow \Lambda$ such that $\Lambda \cap (\Lambda_1\otimes\R) = \Lambda_1$ and $\Lambda \cap (\Lambda_2\otimes\R) = \Lambda_2$.
In fact, $\Lambda_1,\Lambda_2$ can be glued in this way if and only if there is an isomorphism $\phi:\glue(\Lambda_1) \to \glue(\Lambda_2)$ that maps the fractional form on $\glue(\Lambda_1)$ to the negative of the fractional form on $\glue(\Lambda_2)$.
For this and for further background on glue groups we refer to \cite{conwaysloane} and \cite{McMullenglue}.

Following a question raised by C. McMullen we consider the distribution of rational planes $L$ in $\Gr(\Q)$ whose glue group $\glue(L(\Z))$ is of a given isomorphism type.
Here, two glue groups  $(\glue(\Lambda_1),\fform{\cdot}{\cdot}_1)$ and $(\glue(\Lambda_2),\fform{\cdot}{\cdot}_2)$ are called isomorphic if there is an isomorphism $\varphi:\glue(\Lambda_1)\to\glue(\Lambda_2)$ between the abstract groups with $\fform{\varphi(x)}{\varphi(y)}_2=\fform{x}{y}_1$ for all $x,y\in \glue(\Lambda_1)$. 
For simplicity, we just write this as $\glue(\Lambda_1)\cong \glue(\Lambda_2)$.

\begin{theorem}[Equidistribution along prescribed isomorphism types]\label{thm:equi along isotypes}
Let $p,q$ be any two distinct odd primes. 
For any $D \in \BD$ fix a plane $L_D \in \mathcal{R}_D$ and set
\begin{align*}
\mathcal{R}_D(L_D) = \setc{L \in \mathcal{R}_D}{\glue(L(\Z))\cong \glue(L_D(\Z))}.
\end{align*}
Let $\mathcal{J}_D(L_D) \subset \mathcal{J}_D$ be the subset of points, whose underlying planes are in $\mathcal{R}_D(L_D)$. 
Then the normalized counting measure on the finite sets $\mathcal{J}_{D}(L_D)$
equidistributes to the uniform probability measure on $\Gr(\R) \times \mathcal{X}_2^4$ when $D$ goes to infinity along any sequence for which the planes $L_D$ are admissible with respect to $p,q$.
\end{theorem}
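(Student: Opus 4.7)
The plan is to reduce Theorem \ref{thm:equi along isotypes} to Theorem \ref{thm:orbit version} by showing that the isomorphism type of $\glue(L(\Z))$ is constant along each packet $\mathcal{P}(L)$ of Section \ref{section:prooffromdynstatement}, and then equidistributing packet-by-packet. For the first step I fix $L \in \mathcal{R}_D$ and $L' \in \mathcal{P}(L)$. By Proposition \ref{prop:generating intpts} one can write $L' = (g_1,g_2).L$ for some $(g_1,\ldots,g_6) \in \G(\R \times \widehat{\Z})$ representing a class in $\G(\Q)\torus_L(\adele)$. Localising at each prime $p$, the components $g_{1,p},g_{2,p} \in \SU_2(\Z_p)$ act as integral isometries of $(\Z_p^4,Q)$, so $L'(\Z_p) = (g_{1,p},g_{2,p}).L(\Z_p)$ are isomorphic as quadratic $\Z_p$-modules. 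Since the glue group of an integral lattice admits a local--global decomposition $\glue(L(\Z)) \cong \bigoplus_p \glue(L(\Z_p))$ compatible with the fractional forms, we obtain $\glue(L'(\Z)) \cong \glue(L(\Z))$, so $\mathcal{R}_D(L_D)$ is a disjoint union of packets.

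Next one needs admissibility of every packet appearing in $\mathcal{R}_D(L_D)$. By Proposition \ref{prop:geometric-nonsqfree} together with pair-primitivity, the glue type at a prime $p$ determines $\max\{\ord_p(a_1(L)),\ord_p(a_2(L))\}=\ord_p(a_1(L))+\ord_p(a_2(L))$, whence the product $Q(\widetilde{a}_1(L))\,Q(\widetilde{a}_2(L))$; in particular the third admissibility quantity $Q(\widetilde{a}_1(L))Q(\widetilde{a}_2(L))/\disc(L)$ agrees with that of $L_D$ and tends to infinity. The glue type does not, however, determine the ordered pair $(\ord_p(a_1(L)),\ord_p(a_2(L)))$, so the individual conditions $Q(\widetilde{a}_i(L))\to\infty$ require a truncation analogous to the one used to pass from Theorem \ref{thm:orbit version} to Theorem \ref{thm:main}: I would discard the ``twisted'' subset of $\mathcal{R}_D(L_D)$ on which one of the $Q(\widetilde{a}_i(L))$ stays bounded, bounding its contribution by the $2^{\omega(D)}=D^{o(1)}$ number of possible ordered local structures compatible with a fixed glue class, combined with a Siegel-type lower bound $|\mathcal{R}_D(L_D)|\gg D^{1-o(1)}$ for the total cardinality.

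Once admissibility is secured on the truncated collection, Theorem \ref{thm:orbit version} yields $\mu_L \to m_{X_S}$ for every surviving packet $\mathcal{P}(L)$, and the passage to equidistribution of $\mathcal{J}_D(L_D) \subset \Gr(\R) \times \mathcal{X}_2^4$ mirrors Section \ref{section:prooffromdynstatement} up to the $D^{o(1)}$ error from exceptional planes (Proposition \ref{prop:collection intpts}(c)). The main obstacle is the admissibility bookkeeping in the second step: one must establish a counting lower bound for $|\mathcal{R}_D(L_D)|$ refined by glue type, presumably via the mass formula for the genus of binary quadratic forms containing $Q|_{L_D(\Z)}$, so that the twisted non-admissible packets, few in number relative to the full collection, can be excluded without affecting the limiting measure.
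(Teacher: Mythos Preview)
Your first step matches the paper exactly: glue type is constant along each packet because at every prime $p$ the generated plane $L'$ satisfies $L'(\Z_p)=(g_{1,p},g_{2,p}).L(\Z_p)$ for some $(g_{1,p},g_{2,p})\in\SU_2^2(\Z_p)$, hence $\glue(L'(\Z))_p\cong\glue(L(\Z))_p$, and one assembles via the local--global decomposition. This is precisely the paper's argument.

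Where you diverge is the admissibility bookkeeping, and here you are doing far too much work. You correctly observe that the glue type pins down $Q(\widetilde{a}_1(L))\,Q(\widetilde{a}_2(L))/D$ but not the individual factors. However, you miss a one-line inequality that makes the truncation completely unnecessary: since $Q(\widetilde{a}_j(L))\leq Q(a_j(L))=D$ for each $j$, we get
\[
Q(\widetilde{a}_i(L))\;=\;\frac{Q(\widetilde{a}_1(L))\,Q(\widetilde{a}_2(L))}{Q(\widetilde{a}_j(L))}\;\geq\;\frac{Q(\widetilde{a}_1(L))\,Q(\widetilde{a}_2(L))}{D}
\]
for $\{i,j\}=\{1,2\}$. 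The right-hand side is a glue-type invariant and coincides with that of $L_D$, hence tends to infinity by the hypothesis on $L_D$. So \emph{every} plane in $\mathcal{R}_D(L_D)$ automatically satisfies all three admissibility conditions, and any sequence of packet representatives drawn from $\mathcal{R}_D(L_D)$ is admissible. Theorem~\ref{thm:orbit version} then applies directly to each packet, and the passage to $\Gr(\R)\times\mathcal{X}_2^4$ goes exactly as in Section~\ref{section:prooffromdynstatement}.

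Your proposed truncation would also work in principle, but it is strictly harder: it requires a nontrivial lower bound on $|\mathcal{R}_D(L_D)|$ (which you flag as an obstacle) and an upper bound on the ``twisted'' packets. The paper's route avoids all of this by the inequality above, which is why its proof is three lines long.
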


Here, admissible sequences of planes were defined in Section \ref{section:formulation dyn.result}.
Given the local interpretation of glue groups from the next subsection and Theorem \ref{thm:orbit version} this result is quite directly deduced.

\subsection{Local glue groups}

The glue group of a lattice $\Lambda \subset \Q^n$ can be computed from local quantities.
For any prime $p$ denote by $\Lambda_p = \Lambda \otimes \Z_p$ the completion at $p$ and define the \emph{$p$-glue group} of $\Lambda$ as the abelian $p$-group
\begin{align*}
\glue(\Lambda)_p = \rquot{\dual{\Lambda_p}}{\Lambda_p}.
\end{align*}
Notice that $\glue(\Lambda)_p$ also comes equipped with a fractional form
\begin{align*}
\fform{\cdot}{\cdot}_p:\glue(\Lambda)_p \times \glue(\Lambda)_p \to \rquot{\Q_p}{\Z_p}.
\end{align*}
Then the glue group of $\Lambda$ can be computed as
\begin{align*}
\glue(\Lambda) \cong \prod_p \glue(\Lambda)_p
\end{align*}
and for any $v,w\in\dual{\Lambda}$ the value $\fform{v+\Lambda}{w+\Lambda}$ is uniquely determined by the values $\fform{v+\Lambda_p}{w+\Lambda_p}_p$ for all primes $p$.
We remark that the $p$-glue group is clearly also defined for general $\Z_p$-lattices in $\Q_p^n$.

\begin{proof}[Proof of Theorem \ref{thm:equi along isotypes}]
By Theorem \ref{thm:orbit version} it suffices to show that for any $L \in \mathcal{R}_D$ the planes underlying the points generated from the adelic orbit $\G(\Q)\torus_L(\adele)$ all have the same isomorphism type as $\glue(L(\Z))$.
If $L' \in \mathcal{R}_D$ is any such plane and $p$ is a prime, then the construction shows that there is $g \in \SU_2^2(\Z_p)$ such that $g.L(\Z_p) = L'(\Z_p)$.
In particular, $\glue(L(\Z))_p \cong \glue(L'(\Z))_p$ for all primes $p$ and thus $\glue(L(\Z)) \cong \glue(L'(\Z))$.
\end{proof}

\subsection{Primitivity and glue}\label{section:primitivity vs glue}

In this section we compute the glue group for lattices of the form $L(\Z)$ where $L \in \mathcal{R}_D$.
Notice that this is not needed for the proof of Theorem \ref{thm:equi along isotypes}, but relates our discussion of glue groups to Section~\ref{section:associated intpts} and Section~\ref{section:local for forms}.
We refrain from discussing the fractional forms here and focus on the abstract groups.

\begin{proposition}[Primitivity at odd primes]\label{prop:isotype at odd primes}
For an odd prime $p$ let $L\in \mathcal{R}_{D,p}$ and define the numbers $k = \max\set{\ord_p(a_1(L)),\ord_p(a_2(L))}$ and $n = \ord_p(D)$.
Then the $p$-glue group of $L(\Z_p)$ satisfies
\begin{align*}
\glue(L(\Z)))_p \cong \rquot{\Z}{p^k\Z}\times \rquot{\Z}{p^{n-k}\Z}.
\end{align*}
\end{proposition}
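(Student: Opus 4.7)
The plan is to reduce to the explicit local normal form provided by Lemma \ref{lemma:loc.conjugacy of planes} and then carry out a direct Gram--matrix computation. Since the action of $\SU_2^2(\Z_p)$ on $\quat(\Z_p)$ preserves the norm form $Q$ and its associated bilinear form $\scalara{\cdot}{\cdot}$, it preserves the lattice, its dual, and hence the $p$-glue group. Therefore I may replace $L(\Z_p)$ by its translate $g.L(\Z_p)$ for $g \in \SU_2^2(\Z_p)$ furnished by Lemma \ref{lemma:loc.conjugacy of planes}, reducing the problem to the explicit $\Z_p$-lattice $\Lambda := \Z_p e_1 \oplus \Z_p e_2$ with
\[
e_1 = \begin{pmatrix} \alpha_1 & 0 \\ 0 & \alpha_2 \end{pmatrix}, \qquad
e_2 = \begin{pmatrix} 0 & 1 \\ -D/(\alpha_1 \alpha_2) & 0 \end{pmatrix},
\]
where $\ord_p(\alpha_i) = \ord_p(a_i(L))$. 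After normalizing the simultaneous $\Z_p^\times$-scaling of $(a_1(L),a_2(L))$ so that pair-primitivity holds at the odd prime $p$ (Lemma \ref{lemma:pairwise primitivity}), one of the two orders is zero, and consequently $\ord_p(\alpha_1\alpha_2)=k$.

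Next I compute the Gram matrix of $\Lambda$ in the basis $(e_1,e_2)$. Under the identification $\quat(\Q_p)\cong\Mat_2(\Q_p)$ the form $Q$ becomes the determinant and conjugation becomes the matrix adjugate, so the associated bilinear form is $\scalara{x}{y} = \tfrac12 \Tr(x\bar y)$. A direct calculation gives $Q(e_1)=\det(e_1)=\alpha_1\alpha_2$ and $Q(e_2)=\det(e_2)=D/(\alpha_1\alpha_2)$, while $e_1\bar e_2$ is anti-diagonal and hence traceless, so $\scalara{e_1}{e_2}=0$. The Gram matrix is therefore the diagonal matrix $G=\mathrm{diag}\bigl(\alpha_1\alpha_2,\,D/(\alpha_1\alpha_2)\bigr)$.

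From the diagonal form of $G$ it follows that the dual lattice is
\[
\dual{\Lambda} \;=\; (\alpha_1\alpha_2)^{-1}\,\Z_p\, e_1 \;\oplus\; (\alpha_1\alpha_2/D)\,\Z_p\, e_2,
\]
and the $p$-glue group therefore reads
\[
\glue(L(\Z))_p \;\cong\; \dual{\Lambda}/\Lambda \;\cong\; \Z_p/\alpha_1\alpha_2\Z_p \;\oplus\; \Z_p/\bigl(D/(\alpha_1\alpha_2)\bigr)\Z_p \;\cong\; \Z/p^k\Z \oplus \Z/p^{n-k}\Z,
\]
using $\ord_p(\alpha_1\alpha_2)=k$ and $\ord_p(D/(\alpha_1\alpha_2))=n-k$. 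I do not anticipate a serious obstacle: all the substantive work has been carried out in Lemma \ref{lemma:loc.conjugacy of planes}, and the remaining Gram-matrix step is mechanical. The only detail to handle carefully is the normalization of the $\Z_p^\times$-scaling freedom in the pair $(a_1(L),a_2(L))$ so that $\ord_p$ is additive on $\alpha_1\alpha_2$; this is precisely what pair-primitivity at $p$ provides.
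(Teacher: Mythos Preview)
Your proof is correct and follows essentially the same approach as the paper: both reduce to the normal form of Lemma~\ref{lemma:loc.conjugacy of planes}, observe that the two basis vectors are orthogonal with $Q$-values $\alpha_1\alpha_2$ and $D/(\alpha_1\alpha_2)$, and read off the glue group from the resulting diagonal Gram matrix. The only difference is presentational---the paper phrases the last step as computing glue groups of rank-one summands and taking the product, while you write out the dual lattice explicitly---but the content is identical.
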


We remark that the subspaces $L$ and $L^\perp$ can be shown to have isomorphic (local or non-local) glue groups by elementary means \cite[p.~100]{conwaysloane} which agrees with the above proposition where $k$ is invariant under taking orthogonal complements.
In fact, one can prove using $\mathcal{G}(L(\Z)) \simeq \mathcal{G}(L^\perp(\Z))$ that $\ord_p(q_L) = \ord_p(q_{L^\perp})$ which can be seen to suffice for the proof of Theorem~\ref{thm:main} (replacing Proposition~\ref{prop:geometric-nonsqfree}).

\begin{proof}
Notice that the action of $\SU_2^2(\Z_p)$ on the set $\mathcal{R}_{D,p}$ preserves the isomorphism class of $p$-glue groups (that is, including the fractional form).
Recall that by Lemma~\ref{lemma:loc.conjugacy of planes} and under the identification $\quat(\Q_p) \cong \Mat_2(\Q_p)$ there exists some $g \in \SU_2^2(\Z_p)$ and non-zero $\alpha_1,\alpha_2 \in \Z_p$ such that
\begin{align*}
g.L(\Z_p) 
= \Z_p \begin{pmatrix}
\alpha_1 & 0 \\ 
0 & \alpha_2
\end{pmatrix} \oplus 
\Z_p \begin{pmatrix}
0 & 1 \\ 
-\frac{D}{\alpha_1\alpha_2} & 0
\end{pmatrix}
\end{align*}
with $\ord_p(\alpha_1) = \ord_p(a_1(L))$ and $\ord_p(\alpha_2) = \ord_p(a_2(L))$.
By pair-primitivity we have $k = \ord_p(\alpha_1) + \ord_p(\alpha_2) = \ord_p(\alpha_1\alpha_2)$.

Since this direct sum is orthogonal, the $p$-glue group of $g.L(\Z_p)$ is simply the product of the $p$-glue groups of the summands.
Now for any primitive vector $v \in \Z_p^3$ the $p$-glue group of $\Z_p v$ is $\rquot{\Z_p}{Q(v)\Z_p} \cong \rquot{\Z}{p^{\ord_p(Q(v))}}\Z$. 
For $v = \Big(\SmallMatrix{\alpha_1 & 0 \\ 
0 & \alpha_2}\Big)$ this gives
\begin{align*}
\mathcal{G}\left(\Zp \Big(\SmallMatrix{\alpha_1 & 0 \\ 
0 & \alpha_2}\Big)\right)
\cong
\rquot{\Zp}{\alpha_1\alpha_2\Zp} \cong \rquot{\Z}{p^{\ord_p(\alpha_1\alpha_2)}\Z}
\cong \rquot{\Z}{p^k\Z}.
\end{align*}
Similarly, $\mathcal{G}\left(\Zp \Big(\SmallMatrix{0 & 1 \\ 
-\frac{D}{\alpha_1\alpha_2} & 0}\Big)\right) \cong \rquot{\Z}{p^{n-k}\Z}$ and the proposition follows.
\end{proof}

Contrary to the behaviour at odd primes, the local glue groups at $2$ are determined by the discriminant only.

\begin{proposition}[Primitivity at $2$]\label{prop:isotype at 2}
Let $L \in \mathcal{R}_{D,2}$ for $D \in \BD$ and assume that $n = \ord_2(D)$ is positive.
Then
\begin{align*}
\glue(L(\Z))_2 \cong \rquot{\Z}{2\Z} \times \rquot{\Z}{2^{n-1}\Z}.
\end{align*}
\end{proposition}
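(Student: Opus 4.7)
The plan is as follows. Since $D \in \BD$ and $n := \ord_2(D) \geq 1$, we have $n \in \{1,2,3\}$ (the congruence $D \equiv 0 \pmod{16}$ being excluded). The case $n = 1$ is immediate: the $2$-glue group has order $2$ and must therefore equal $\Z/2\Z = \Z/2\Z \times \Z/2^{0}\Z$.

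For $n \in \{2, 3\}$ I first observe that $a_1(L), a_2(L) \in 2\Z^3$. This is a direct consequence of $D \equiv 0 \pmod 4$: if $a \in \Z^3$ satisfies $Q(a) = D$, then the number of odd coordinates of $a$ must be $\equiv 0 \pmod 4$ (since each odd square contributes $1$ modulo $4$), hence is zero. Writing $a_i(L) = 2w_i$ with $w_i \in \Z^3$, pair-primitivity of $(a_1(L), a_2(L))$ from Lemma \ref{lemma:pairwise primitivity} translates into $w_1 \not\equiv w_2 \pmod 2$.

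The key step is to show that $\overline{L(\Z_2)} := L(\Z_2)/2L(\Z_2)$, regarded as a $2$-dimensional subspace of $\F_2^4$ via saturation of $L(\Z_2)$, is contained in $H := \{x \in \F_2^4 : x_0+x_1+x_2+x_3 = 0\}$; equivalently, $Q$ takes only even values on $L(\Z_2)$. By Proposition \ref{prop:splitting map}, $L(\Z_2) = \{x \in \quat(\Z_2) : a_1(L) x = x a_2(L)\}$, which upon dividing by $2$ reads $w_1 x = x w_2$. Reducing modulo $2$ and using that $\quat(\F_2)$ is commutative (as $\ii \jj = \kk = -\kk = \jj \ii$ in characteristic $2$), one obtains $h \bar{x} = 0$ in $\quat(\F_2)$ for every $\bar x \in \overline{L(\Z_2)}$, where $h := \overline{w_1} - \overline{w_2} \in \quat_0(\F_2)$ is nonzero by pair-primitivity. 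Now $\quat(\F_2) \cong \F_2[u,v]/(u^2, v^2)$ is a local ring whose maximal ideal coincides with $H$ (an element being a unit precisely when $\sum_i x_i = 1$). If $h$ were a unit then $\overline{L(\Z_2)} = 0$, contradicting the primitivity of $v_1 \wedge v_2$ for any $\Z_2$-basis $v_1, v_2$ of $L(\Z_2)$. Hence $h \in H$, $h^2 = 0$, and a direct computation shows that $\operatorname{ann}(h)$ is a $2$-dimensional subspace of $H$. A dimension count then forces the equality $\overline{L(\Z_2)} = \operatorname{ann}(h) \subset H$.

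Finally, one computes the glue group. Let $v_1, v_2$ be a $\Z_2$-basis of $L(\Z_2)$ and let $G$ denote its Gram matrix. The diagonal entries $Q(v_i)$ both lie in $2\Z_2$ by the above; and since $\det G = D \in 2\Z_2$ while $Q(v_1)Q(v_2) \in 4\Z_2$, the off-diagonal entry $\langle v_1, v_2\rangle$ also lies in $2\Z_2$. Thus $G = 2 G'$ with $G' \in \Mat_2(\Z_2)$ and $\ord_2(\det G') = n-2$. Moreover $G' \notin 2 \Mat_2(\Z_2)$, for otherwise $16 \mid D$ which contradicts $D \in \BD$. Therefore the Smith normal form of $G'$ over $\Z_2$ is $(1, \det G')$, that of $G$ is $(2, D/2)$, and the $2$-glue group is $\Z/2\Z \times \Z/2^{n-1}\Z$ as claimed. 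The main subtle point is the mod-$2$ analysis of the defining equation $w_1 x = x w_2$, which becomes tractable via the local-ring structure of $\quat(\F_2)$.
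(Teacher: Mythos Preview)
Your proof is correct and takes a genuinely different route from the paper's.

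The paper exploits the division algebra structure of $\quat(\Q_2)$: it picks $g \in L(\Z_2)$ with $\ord_2(\Nr(g))$ minimal, observes that $L(\Q_2) = \Q_2 g \oplus \Q_2 w_1 g$ is an orthogonal decomposition, and then uses the fact that $z_1^2 + d z_2^2 \equiv 0 \pmod 4$ has no nontrivial solutions (since $d = D/4 \equiv 1,2 \pmod 4$) to conclude $L(\Z_2) = \Z_2 g \oplus \Z_2 w_1 g$. Comparing discriminants gives $\ord_2(\Nr(g)) = 1$ and $\ord_2(\Nr(w_1 g)) = n-1$, from which the glue group is read off.

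Your argument instead works entirely mod $2$: the commutativity of $\quat(\F_2)$ turns the defining equation $w_1 x = x w_2$ into the annihilator condition $h\bar x = 0$, and the local-ring structure of $\quat(\F_2) \cong \F_2[u,v]/(u^2,v^2)$ lets you pin down $\overline{L(\Z_2)}$ exactly by a dimension count. This neatly sidesteps the search for an orthogonal $\Z_2$-basis and replaces the congruence $z_1^2 + dz_2^2 \equiv 0 \pmod 4$ by the Smith normal form computation; what the paper's approach buys in return is an explicit diagonalization of $Q|_{L(\Z_2)}$.

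Two small remarks. First, Lemma~\ref{lemma:pairwise primitivity} is stated for rational planes, but its proof (via primitivity of $v_1 \wedge v_2$) carries over verbatim to saturated $\Z_2$-sublattices, so your use of it is legitimate. Second, when you assert that $\operatorname{ann}(h)$ is $2$-dimensional you implicitly use that $h$ is not the socle element $1+\ii+\jj+\kk$; this is automatic since $h \in \quat_0(\F_2)$ has vanishing constant term, but it would not hurt to say so.
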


\begin{proof}
By Corollary \ref{cor: congruence condition} (describing the assumption $D \in\BD$) we only need to handle the cases $n=1,2,3$.
Note that for $n=1$ the statement is clear, as $\glue(L(\Z))_2$ is a finite group of order $2$.

In the cases $n=2$ or $n=3$ we will heavily use the fact that $\quat(\Q_2)$ is a division algebra (compare the discussion to follow with \cite[Prop.~3.7]{Linnikthmexpander}).
As $D$ is divisible by $4$, the vectors $w_1 = \frac{1}{2}a_1(L)$ and $w_2 = \frac{1}{2}a_2(L)$ are integral. 
Choose a non-zero element $g \in L(\Z_2)$ of maximal norm.
Looking at the inverse Klein map (cf.~Proposition~\ref{prop:splitting map}), this element satisfies $w_1 g = g w_2$. 
Moreover,
\begin{align*}
L(\Q_2) 
&= \setc{x\in \quat(\Q_2)}{w_1x = x w_2}
= \setc{x\in \quat(\Q_2)}{w_1x = x g^{-1}w_1g}\\
&= \setc{y \in \quat(\Q_2)}{w_1y = y w_1}g = \Q_2 g \oplus \Q_2 w_1 g,
\end{align*}
which is in fact an orthogonal sum as $\Tr(w_1g\overline{g}) = \Tr(w_1) g \overline{g} = 0$.

If $x \in L(\Z_2)$ is any non-zero vector and $x_1,x_2 \in \Q_2$ are such that $x = x_1 g + x_2 w_1g$ then $\Nr(x) = \Nr(g) (x_1^2 + \Nr(w_1)x_2^2)$.
We introduce the short-hand $d = \frac{D}{4} = \Nr(w_1)$ and note that by our choice of $g$ we have $x_1^2 + d x_2^2 \in \Z_2$.
Since $d \equiv 1,2\mod 4$ (as $D \in \BD$), the congruence equation $z_1^2+dz_2^2 \equiv 0 \text{ mod }4$ has no non-trivial solutions.
This implies in particular for any $z_1,z_2 \in \Q_2$ with $z_1^2+dz_2^2\in \Z_2$ that $z_1,z_2 \in \Z_2$.
Hence, we have $x_1,x_2 \in \Z_2$ and
\begin{align*}
 L(\Z_2) = \Z_2 g \oplus \Z_2 w_1 g.
\end{align*}
In particular, $\disc(L(\Z_2)) = D = 4 \Nr(w_1)$ equals $\Nr(g)^2\Nr(w_1)$ up to a square in $\Z_2^\times$ which implies that $\ord_2(\Nr(g)) = 1$.
As $\ord_2(\Nr(w_1)) = \ord(d) = n-2$, the statement for the glue group can be obtained as in the conclusion of the proof of Proposition \ref{prop:isotype at odd primes}.
\end{proof}

As mentioned after Proposition \ref{prop:splitting map}, one can refine the statement at the prime~$2$ therein.
For instance, one can show for $L \in \mathcal{R}_D$ that $\ord_{2}(Q|_{L(\Z)}) = 0$ whenever $D\equiv 1,2\mod 4$ and that $\ord_{2}(Q|_{L(\Z)}) = 1$ whenever $D \equiv 0 \mod 4$.
For this, one can apply the technique in the proof of Proposition \ref{prop:isotype at 2} above.
We omit this here.

\begin{appendix}
\section{Proof of Corollary \ref{cor:averaged}}\label{section:proof of averaged version}

In this section, we will prove the averaged version (Corollary ~\ref{cor:averaged}) of our main theorem (Theorem \ref{thm:main}) using the homogeneous counting results of Duke-Rudnick-Sarnak \cite{dukerudnicksarnak} and Eskin-McMullen \cite{eskinmcmullen}.

Consider first the following tentative argument.
Let $\mathbb{P}_m$ be the set of the first $m$ odd primes.
The 'probability' that $-D \in \Z$ is zero or a non-square modulo $ p$ is bounded from above by $\frac{2}{3}$.
Thus, the probability that there are no two distinct primes $p,q \in \mathbb{P}_m$ with $-D \mod p \in (\Fp^\times)^2$ and with $-D \mod q \in (\BF_q^\times)^2$ is at most $(m+1)(\frac{2}{3})^m$ (essentially by the Chinese remainder theorem).
Somewhat similarly, we would now like to know the proportion of the set of planes $L$ in $\mathcal{R}_{< D}= \bigcup_{d < D}\mathcal{R}_d$ for which the discriminant $\disc(L)$ satisfies $-\disc(L) \mod p \in (\Fp^\times)^2$ for at most one $p \in \mathbb{P}_m$. 
In fact, we claim that this proportion is also $\ll (m+1)(\frac{2}{3})^m$.
To prove this, we will use the counting results mentioned above to estimate the number of points in $\mathcal{R}_{< D}$ that satisfy certain congruence conditions. 
Notice that from the claim Corollary \ref{cor:averaged} follows quite immediately from Theorem~\ref{thm:main} (cf.~the proof below).

\subsection{Definition of the homogeneous space}

As in Section~\ref{sec:wedgespaces} we denote by $\purewedges$ the variety of pure $2$-wedges in affine four space. As this section uses mostly the real points and subsets thereof, we write $\varwedge = \purewedges(\R)$ for simplicity.
Recall that $\rquot{\varprim(\Z)}{\set{\pm 1}}$ can be identified with the set of rational planes.

We view
\begin{align*}
\varwedge \subset \nicewedge{2} \R^4 \simeq \R^6
\end{align*}
by choosing the standard basis $e_m \wedge e_n$ with $m <n$ for the identification $\bigwedge^2 \R^4 \simeq \R^6$.
Let us denote by $x_{mn}$ the coordinates in this basis.
Notice that $\varwedge$ is the zero locus of the quadratic form $w \in \bigwedge^2 \R^4 \mapsto w \wedge w$ which is represented in the standard basis by the form $x_{12}x_{34}-x_{13}x_{24}+x_{14}x_{23}$.

Let $\norm{\cdot}$ be the Euclidean norm on $\R^6$. We may retrieve the discriminant of a rational plane $L\in \Gr(\Q)$ with integral basis $v_1,v_2\in \Z^4$ by the formula
\begin{align*}
\norm{v_1\wedge v_2}^2 = \disc(L).
\end{align*}
In particular,
\begin{align*}
2\big|\mathcal{R}_{< D}\big| = \big|\varprim(\Z)\cap B_{\sqrt{D}}(0)\big|.
\end{align*}

\subsubsection{$\varwedge$ as a homogeneous variety}
Note that $\SL_4(\R)$ acts transitively on $\varwedge$ via $g. (v_1\wedge v_2) = (g.v_1) \wedge (g.v_2)$ (this is simply the natural action on planes).
The induced action of $\SL_4(\Z)$ on $\varprim(\Z)$ is transitive.
Furthermore, the stabilizer of the wedge $e_1 \wedge e_2$ under the action of $\SL_4(\R)$ is the group
\begin{align*}
H = \left\lbrace \begin{pmatrix}
A & \ast \\
0 & B
\end{pmatrix}: A,B \in \SL_2(\R)\right\rbrace.
\end{align*}
We may thus identify $\varwedge$ with the quotient $\rquot{\SL_4(\R)}{H}$. 
More generally, we denote by $H_w$ the stabilizer of $w \in \varwedge$.

\subsubsection{Reducing points on the variety}
Let $N \in \N$ be odd and square-free.
We consider the similarly defined finite set 
\begin{align*}
\purewedges\big(\rquot{\Z}{N\Z}\big) 
\subset \nicewedge{2} \big(\rquot{\Z}{N\Z}\big)^4 \simeq \big(\rquot{\Z}{N\Z}\big)^6
\end{align*}
and let $\varprim(\rquot{\Z}{N\Z})$ to be the set of primitive vectors $\mathbf{a} \in \purewedges\big(\rquot{\Z}{N\Z}\big)$.
Denote by $\purewedges_{\mathbf{a}}(\Z) \subset \varprim(\Z)$ the set of wedges~$w$ with $w \equiv \mathbf{a} \mod N$. 
By the Chinese remainder theorem we have
\begin{align}\label{eq:chinese remainder}
\varprim\big(\rquot{\Z}{N\Z}\big) 
\simeq \prod_{p \mid N} \varprim\big(\Fp\big)
\end{align}
where taking the discriminant on the left-hand side corresponds to taking the discriminant componentwise on the right-hand side.
Clearly, $\varprim(\Fp) = \purewedges(\Fp)\setminus~\{0\}$.

\subsection{Proof of Corollary \ref{cor:averaged}}

The proof of Corollary \ref{cor:averaged} uses (apart from Theorem \ref{thm:main}) the following two ingredients.

\begin{proposition}[Counting under congruence conditions]\label{prop:counting in fibers of reduction}
Let $N\in \N$ be odd and square-free and let $\mathbf{a} \in \varprim(\rquot{\Z}{N\Z})$ so that $\purewedges_{\mathbf{a}}(\Z) \neq \emptyset$. Then we have
\begin{align*}
|\purewedges_{\mathbf{a}}(\Z) \cap B_{\sqrt{D}}(0)| 
\asymp \tfrac{2}{|\varprim(\rquot{\Z}{N\Z})|} |\mathcal{R}_{<D}|.
\end{align*}
\end{proposition}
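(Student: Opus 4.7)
The strategy is to realize both sides as orbit counts of arithmetic groups on the homogeneous variety $\varwedge\setminus\{0\}$ and then invoke the cited counting theorems of Duke--Rudnick--Sarnak and Eskin--McMullen.

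First I would identify $\varwedge\setminus\{0\}$ with $\SL_4(\R)/H_{w_0}$, where $w_0:=e_1\wedge e_2$ and
\[
H_{w_0}=\left\{\begin{pmatrix} A & B \\ 0 & D\end{pmatrix}:A,D\in\SL_2(\R),\ B\in\Mat_{2\times 2}(\R)\right\}
\]
is the stabilizer of $w_0$ under the natural $\SL_4(\R)$-action on $\bigwedge^2\R^4$. The lattice $\SL_4(\Z)$ acts transitively on $\varwedge_{\mathrm{prim}}(\Z)$, because a primitive decomposable wedge corresponds to a primitive rank-two sublattice of $\Z^4$ and any two such sublattices are $\SL_4(\Z)$-equivalent by completion of bases. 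Define the congruence subgroup $\Gamma_N:=\pi_N^{-1}\bigl(\Stab_{\SL_4(\Z/N\Z)}(\mathbf a)\bigr)$, where $\pi_N:\SL_4(\Z)\to\SL_4(\Z/N\Z)$ is reduction mod $N$. Using the same completion-of-bases argument over $\Fp$ to get transitivity of $\SL_4(\Fp)$ on $\varwedge(\Fp)\setminus\{0\}$ together with the Chinese remainder theorem, $\SL_4(\Z/N\Z)$ acts transitively on $\varwedge_{\mathrm{prim}}(\Z/N\Z)$. Combining this with the surjectivity of $\pi_N$ (strong approximation for the simply-connected group $\SL_4$), one deduces that $\varwedge_{\mathbf a}(\Z)$ is a single $\Gamma_N$-orbit and that $[\SL_4(\Z):\Gamma_N]=|\varwedge_{\mathrm{prim}}(\Z/N\Z)|$.

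The key observation is the tautological identity
\[
\SL_4(\Z)\cap H_{w_0'}=\Gamma_N\cap H_{w_0'}
\]
for any basepoint $w_0'\in\varwedge_{\mathbf a}(\Z)$, which holds because any element that fixes $w_0'$ on the nose automatically fixes $\mathbf a=w_0'\bmod N$. Applying the Eskin--McMullen counting theorem separately to $\Gamma=\SL_4(\Z)$ and $\Gamma=\Gamma_N$ with a fixed compatible choice of Haar measures gives
\[
|\Gamma.w_0'\cap B_{\sqrt D}(0)|\sim \frac{\mathrm{vol}(B_{\sqrt D}(0)\cap \varwedge)\cdot\mathrm{vol}((\Gamma\cap H_{w_0'})\backslash H_{w_0'})}{\mathrm{vol}(\Gamma\backslash \SL_4(\R))}
\]
in each case. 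The stabilizer-covolume factors now coincide, and dividing yields
\[
\frac{|\varwedge_{\mathbf a}(\Z)\cap B_{\sqrt D}(0)|}{|\varwedge_{\mathrm{prim}}(\Z)\cap B_{\sqrt D}(0)|}\sim \frac{\mathrm{vol}(\SL_4(\Z)\backslash \SL_4(\R))}{\mathrm{vol}(\Gamma_N\backslash \SL_4(\R))}=\frac{1}{[\SL_4(\Z):\Gamma_N]}=\frac{1}{|\varwedge_{\mathrm{prim}}(\Z/N\Z)|}.
\]
Combining with the identity $|\varwedge_{\mathrm{prim}}(\Z)\cap B_{\sqrt D}(0)|=2|\mathcal{R}_{<D}|$ recorded immediately before the proposition delivers the claimed $\asymp$.

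The principal technical point is verifying the hypotheses of the Eskin--McMullen (or Duke--Rudnick--Sarnak) theorem in our parabolic-stabilizer setting: one must check that $H_{w_0}$ is unimodular (which follows from the fact that the $\SL_2\times\SL_2$-action on $\Mat_{2\times 2}$ by $(A,D).B=ABD^{-1}$ has determinant $(\det A)^2(\det D)^{-2}=1$), that $\SL_4(\Z)\cap H_{w_0}=(\SL_2(\Z)\times\SL_2(\Z))\ltimes\Mat_{2\times 2}(\Z)$ is indeed a lattice in $H_{w_0}$, and that the Euclidean balls $B_T(0)\cap\varwedge$ form a well-rounded expanding family in the required sense. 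Once these are in place the cancellation coming from $\SL_4(\Z)\cap H_{w_0}=\Gamma_N\cap H_{w_0}$ delivers the proposition essentially for free.
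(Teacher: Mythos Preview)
Your proposal is correct and follows essentially the same approach as the paper: apply the Eskin--McMullen counting theorem to both the full lattice $\SL_4(\Z)$ and the congruence subgroup $\Gamma_N=\Gamma_{\mathbf a}$, then take the ratio, using that the stabilizer intersections $\SL_4(\Z)\cap H_{w_0'}=\Gamma_N\cap H_{w_0'}$ coincide so that the stabilizer-covolume factor (the paper's constant~$C$) cancels. The paper separates the transitivity and index computation for $\Gamma_{\mathbf a}$ into a preliminary lemma, whereas you fold it into the proof, and you are somewhat more explicit about the technical hypotheses (unimodularity of $H_{w_0}$, well-roundedness of the balls), but the argument is the same.
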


\begin{proposition}\label{prop:elm counting variety mod N}
Let $N \in \N$ be odd and square-free.
Then the relative number of vectors $\mathbf{a} \in \varprim(\rquot{\Z}{N\Z})$ with the property that $-\disc(\mathbf{a})$ is a non-zero square mod~$p$ for at most one prime $p|N$ is $\ll m (\frac{2}{3})^m$ where $m$ is the number of prime divisors of~$N$.
\end{proposition}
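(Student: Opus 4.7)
The plan is to use the CRT isomorphism \eqref{eq:chinese remainder} to reduce the estimate to an independent local computation at each prime $p\mid N$. Set
\[
f_p := \frac{|\{\mathbf{a}_p\in \varwedge_{\mathrm{prim}}(\Fp)\,:\,-\disc(\mathbf{a}_p)\in (\Fp^\times)^2\}|}{|\varwedge_{\mathrm{prim}}(\Fp)|}.
\]
Because the condition ``$-\disc(\mathbf{a})\in(\Fp^\times)^2$'' depends only on the $p$-component of $\mathbf{a}$, the corresponding events are mutually independent under the uniform distribution on $\varwedge_{\mathrm{prim}}(\Z/N\Z)$, and the relative count in the proposition equals
\[
\prod_{p\mid N}(1-f_p)+\sum_{p\mid N}f_p\prod_{p'\mid N,\,p'\neq p}(1-f_{p'}).
\]
Provided that $f_p\geq\tfrac{1}{3}$ uniformly in odd $p$, this is at most $(2/3)^m+m(2/3)^{m-1}\ll m(2/3)^m$, as required.

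It therefore remains to show $f_p\geq 1/3$. First, the primitive wedges associated to a fixed plane $L\in\Gr(\Fp)$ form a punctured line $\{\lambda w:\lambda\in\Fp^\times\}$ in $\varwedge(\Fp)$, and $\disc(\lambda w)=\lambda^2\disc(L)$. Since $\lambda^2$ is a nonzero square, the condition ``$-\disc(\lambda w)\in(\Fp^\times)^2$'' depends only on $L$, so $f_p$ equals the proportion of planes $L\in\Gr(\Fp)$ with $-\disc(L)\in(\Fp^\times)^2$.

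To count such planes we use Witt's extension theorem. The ambient form $Q=x_1^2+\dots+x_4^2$ has discriminant $1$, a square in $\Fp^\times$, hence is hyperbolic over $\Fp$, so $|O_4(\Fp)|=2p^2(p^2-1)^2$. By Witt, two planes lie in the same $O_4(\Fp)$-orbit iff their restrictions are equivalent quadratic forms, which for non-degenerate binary forms over $\Fp$ is detected by the discriminant modulo squares. For a plane $L$ with $-\disc(L)\in(\Fp^\times)^2$, the form $Q|_L$ is hyperbolic (recall that a non-degenerate binary form is hyperbolic iff the negative of its discriminant is a square), and likewise for $Q|_{L^\perp}$ since $\disc(L^\perp)\equiv\disc(L)^{-1}$ modulo squares. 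Hence the stabilizer of $L$ in $O_4(\Fp)$ is $O(Q|_L)\times O(Q|_{L^\perp})$ of order $(2(p-1))^2$, and the orbit has size $p^2(p+1)^2/2$. Dividing by $|\Gr(\Fp)|=(p^2+1)(p^2+p+1)$ gives
\[
f_p=\frac{p^2(p+1)^2}{2(p^2+1)(p^2+p+1)},
\]
and $f_p\geq 1/3$ reduces to the elementary inequality $p^4+4p^3-p^2-2p-2\geq 0$, valid for all $p\geq 2$. The only real technical point is the orbit-size computation above; everything else is bookkeeping.
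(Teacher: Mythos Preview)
Your proof is correct and follows the same global architecture as the paper: reduce via the CRT isomorphism to a local estimate at each prime $p\mid N$, show that the ``bad'' proportion $1-f_p$ is at most $\tfrac{2}{3}$, and conclude by independence. The difference lies entirely in how the local bound is obtained.

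The paper establishes $1-f_p\leq\tfrac{2}{3}$ by a coordinate change that decouples the system $\{\disc(\mathbf a)=\alpha,\ \mathbf a\in W\}$ into two independent copies of $y_1^2+y_2^2+y_3^2=\alpha$; this is precisely the Klein-map decomposition $\Gr\simeq(\mathbb S^2\times\mathbb S^2)/\pm$ showing up at the level of $\Fp$-point counts. The count is then expressed as $r_p(\alpha)^2$ where $r_p$ is the three-squares representation number from Lemma~\ref{lemma:sol quad equation}, and an asymptotic estimate yields $1-f_p\leq\tfrac{2}{3}$ for all but finitely many $p$, the finitely many exceptions being absorbed into the implicit constant via the exponent shift $(\tfrac{2}{3})^{m-m_0}\ll(\tfrac{2}{3})^m$.

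Your route instead passes to planes in $\Gr(\Fp)$, invokes Witt's extension theorem to identify the set $\{L:-\disc(L)\in(\Fp^\times)^2\}$ with a single $O_4(\Fp)$-orbit, and computes its size by the orbit--stabilizer formula using the standard orders of $O_4^+(\Fp)$ and $O_2^+(\Fp)$. This yields the closed form $f_p=\tfrac{p^2(p+1)^2}{2(p^2+1)(p^2+p+1)}$ and hence $f_p\geq\tfrac{1}{3}$ uniformly for all odd~$p$, so no exceptional primes need to be handled. (One may verify directly that your formula agrees with the paper's: the paper's count gives $\frac{p-1}{2}(p^2+p)^2$ good wedges out of $(p^3-1)(p^2+1)$ total, which simplifies to the same expression.) Your argument is slightly cleaner in that it avoids the finite-exception bookkeeping; the paper's argument has the virtue of making the connection to the Klein map explicit and of being self-contained, not presupposing the group-order formulas.
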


We note that the homogeneous counting results \cite{dukerudnicksarnak} and \cite{eskinmcmullen} are used to prove Proposition~\ref{prop:counting in fibers of reduction}.
Also, we remark that explicit asymptotics for $|\mathcal{R}_{<D}|$ as $D \to \infty$ are known (see Schmidt \cite{Schmidt-count}, \cite{Schmidt-shapes}), but will not be needed here.
We will prove Propositions \ref{prop:counting in fibers of reduction} and \ref{prop:elm counting variety mod N} below, but let us first explain how they can be combined to obtain the corollary of Theorem \ref{thm:main}.

\begin{proof}[Proof of Corollary \ref{cor:averaged}]
Let $m \geq 1$ and let $N$ be the product of the first $m$ odd primes.
Furthermore, we set $\mathcal{B}_m$ to be the subset of points $\mathbf{a}$ in $\varprim(\rquot{\Z}{N\Z})$ for which $-\disc(\mathbf{a})$ is a non-zero square modulo at most one prime $p\mid N$.
Denote by $\mathcal{B}_m(D)$ the subset of points in the set $\mathcal{J}_{<D} =\bigcup_{d < D} \mathcal{J}_d$ whose underlying planes $L\in \varprim(\Z)$ satisfy $L \mod N \in \mathcal{B}_m$.
Then by Proposition \ref{prop:counting in fibers of reduction} and Proposition~\ref{prop:elm counting variety mod N}
\begin{align*}
|\mathcal{B}_m(D)| 
\ll \sum_{\mathbf{a} \in \mathcal{B}_m} \tfrac{1}{|\varprim(\rquot{\Z}{N\Z}|)} |\mathcal{R}_{<D}| 
\ll m (\tfrac{2}{3})^m |\mathcal{R}_{<D}|
\end{align*}
Therefore, the average of a continuous function $f:\Gr(\R)\times \mathcal{X}_2^4 \to \C$ over $\mathcal{J}_{<D}$ differs from the average over $\mathcal{J}_{<D} \setminus \mathcal{B}_m(D)$ by $\ll m(\frac{2}{3})^m \norm{f}_\infty$.
Notice that each discriminant appearing in $\mathcal{J}_{<D} \setminus \mathcal{B}_m(D)$ satisfies the splitting conditions of Theorem~\ref{thm:main} at least at two prime divisiors of $N$.
Thus, Theorem \ref{thm:main} implies equidistribution of these finite subsets and so
\begin{align*}
\limsup_{D \to \infty} \Big| \tfrac{1}{|\mathcal{J}_{<D}|} \sum_{x \in \mathcal{J}_{<D}}f(x) - \int f \Big| \ll m (\tfrac{2}{3})^m \norm{f}.
\end{align*}
Since $m$ was arbitrary, Corollary \ref{cor:averaged} follows.
\end{proof}

\subsection{Counting under congruence conditions}

Let $\mathbf{a} \in \varprim(\rquot{\Z}{N\Z})$ be fixed.
Furthermore, let $\Gamma_{\mathbf{a}}$ be the subgroup of $\SL_4(\Z)$ consisting of the elements which preserve the subset $\purewedges_{\mathbf{a}}(\Z) \subset \varprim(\Z)$.

\begin{lemma}[Index of $\Gamma_{\mathbf{a}}$]
Whenever $\purewedges_{\mathbf{a}}(\Z)$ is non-empty, $\Gamma_{\mathbf{a}}$ acts transitively on $\purewedges_{\mathbf{a}}(\Z)$ and the index of $\Gamma_{\mathbf{a}}$ in $\SL_4(\Z)$ is equal to $|\varprim\big(\rquot{\Z}{N\Z}\big)|$.
Furthermore, $\Gamma_{\mathbf{a}}$ is a congruence subgroup.
\end{lemma}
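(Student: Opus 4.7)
The plan is to reinterpret $\Gamma_{\mathbf{a}}$ as the stabilizer of $\mathbf{a}$ under the action of $\SL_4(\Z)$ on $\varwedge_{\mathrm{prim}}(\rquot{\Z}{N\Z})$ induced by reduction modulo $N$, and then deduce all three assertions via an orbit-stabilizer argument. The identification $\Gamma_{\mathbf{a}} = \Stab_{\SL_4(\Z)}(\mathbf{a})$ is immediate: if $\gamma$ preserves $\varwedge_{\mathbf{a}}(\Z)$ setwise and $w \in \varwedge_{\mathbf{a}}(\Z)$ is any element, then $\gamma.\mathbf{a} = (\gamma.w) \bmod N = \mathbf{a}$, while conversely any $\gamma$ fixing $\mathbf{a}$ modulo $N$ sends the fiber over $\mathbf{a}$ to itself by equivariance of reduction.

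The main technical input is the transitivity of $\SL_4(\Z)$ on $\varwedge_{\mathrm{prim}}(\rquot{\Z}{N\Z})$. I would establish this by combining strong approximation for $\SL_4$---that is, the surjectivity of $\SL_4(\Z) \to \SL_4(\rquot{\Z}{N\Z})$, which follows classically by lifting elementary matrices---with the Chinese remainder decomposition \eqref{eq:chinese remainder}. This reduces matters to the transitivity of $\SL_4(\mathbb{F}_p)$ on $\varwedge(\mathbb{F}_p) \setminus \{0\}$ for each $p \mid N$, which is straightforward: given a non-zero pure wedge $v_1 \wedge v_2$ over $\mathbb{F}_p$, complete $v_1, v_2$ to a basis $v_1, v_2, v_3, v_4$ of $\mathbb{F}_p^4$ and send the standard basis to $(v_1, v_2, v_3, \lambda v_4)$, where $\lambda \in \mathbb{F}_p^\times$ is chosen to adjust the determinant to $1$.

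Granting these inputs, the lemma follows quickly. Orbit-stabilizer immediately gives $[\SL_4(\Z) : \Gamma_{\mathbf{a}}] = |\varwedge_{\mathrm{prim}}(\rquot{\Z}{N\Z})|$. For transitivity of $\Gamma_{\mathbf{a}}$ on $\varwedge_{\mathbf{a}}(\Z)$, given $w, w' \in \varwedge_{\mathbf{a}}(\Z)$ the transitivity of $\SL_4(\Z)$ on $\varwedge_{\mathrm{prim}}(\Z)$ (noted in the excerpt) yields $\gamma \in \SL_4(\Z)$ with $\gamma.w = w'$; reducing modulo $N$ shows $\gamma.\mathbf{a} = \mathbf{a}$, so $\gamma \in \Gamma_{\mathbf{a}}$. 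Finally, the principal congruence subgroup $\ker(\SL_4(\Z) \to \SL_4(\rquot{\Z}{N\Z}))$ acts trivially on $\varwedge(\rquot{\Z}{N\Z})$ and hence is contained in $\Gamma_{\mathbf{a}}$, proving that $\Gamma_{\mathbf{a}}$ is a congruence subgroup. No serious obstacle is present; the only external input is strong approximation for $\SL_n$, which is entirely classical.
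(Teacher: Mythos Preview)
Your proposal is correct and follows essentially the same approach as the paper: both identify $\Gamma_{\mathbf{a}}$ with the preimage of $\Stab_{\SL_4(\Z/N\Z)}(\mathbf{a})$ under reduction, use the Chinese remainder theorem together with transitivity of $\SL_4(\mathbb{F}_p)$ on $\varwedge(\mathbb{F}_p)\setminus\{0\}$ to obtain transitivity on $\varwedge_{\mathrm{prim}}(\Z/N\Z)$, apply orbit--stabilizer for the index, and deduce transitivity on $\varwedge_{\mathbf{a}}(\Z)$ from transitivity of $\SL_4(\Z)$ on $\varwedge_{\mathrm{prim}}(\Z)$. The only cosmetic difference is that you make the use of surjectivity of $\SL_4(\Z)\to\SL_4(\Z/N\Z)$ explicit, whereas the paper leaves it implicit.
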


\begin{proof}
Denote by $w \in \Z^6 \mapsto \overline{w}$ the reduction mod $N$ and let $w \in \varprim(\Z)$ with $\overline{w}=\mathbf{a}$.
Then for any $\gamma\in \SL_4(\Z)$ we have $\overline{\gamma.w} = \overline{\gamma}.\mathbf{a}$ where we used the analogous notation for the (surjective) reduction map
$\SL_4(\Z) \to \SL_4(\rquot{\Z}{N\Z})$.
In particular, if $H_{\mathbf{a}}<\SL_4(\rquot{\Z}{N\Z})$ denotes the stabilizer of $\mathbf{a}$ then $\Gamma_{\mathbf{a}}$ is the preimage of $H_{\mathbf{a}}$ under the reduction map.
Therefore, the index of $\Gamma_{\mathbf{a}}$ in $\SL_4(\Z)$ is the index of $H_{\mathbf{a}}$ in $\SL_4(\rquot{\Z}{N\Z})$.
Notice that $\SL_4(\rquot{\Z}{N\Z})$ acts transitively on $\varprim(\rquot{\Z}{N\Z})$.
Indeed, this follows from the Chinese remainder theorem in \eqref{eq:chinese remainder} and its analogue for $\SL_4$ as well as the fact that $\SL_4(\Fp)$ acts transitively on $\purewedges(\Fp)\setminus\{0\}$ for any odd prime~$p$.
Therefore, $\varprim(\rquot{\Z}{N\Z}) = \rquot{\SL_4(\rquot{\Z}{N\Z})}{H_{\mathbf{a}}}$ which implies the latter claim in the lemma.

To prove transitivity of the action of $\Gamma_{\mathbf{a}}$, let $w_1,w_2 \in \purewedges_{\mathbf{a}}(\Z)$ and choose some $\gamma\in\SL_4(\Z)$ with $\gamma.w_1 = w_2$. 
But then $\overline{\gamma}. \mathbf{a} = \overline{\gamma.w_1} = \overline{w_2} = \mathbf{a}$ and so~$\gamma\in \Gamma_{\mathbf{a}}$.
\end{proof}

\begin{proof}[Proof of Proposition \ref{prop:counting in fibers of reduction}]
As mentioned, we use the technique in \cite{eskinmcmullen} (see also \cite{dukerudnicksarnak}).
We begin by recalling the necessary dynamical statement.
Fix some $w\in \purewedges_{\mathbf{a}}(\Z)$.
Note that $\Gamma_\mathbf{a}\cap H_w = H_w(\Z)$ is a lattice in $H_w$ (since $H_w$ has no non-trivial $\Q$-characters).
Let $m_\varwedge$ be a non-trivial measure on $\varwedge$ invariant under $G = \SL_4(\R)$ and assume (after rescaling of the Haar measure $m_{H_w}$ on $H_w$) that for any $f \in C_c(G)$ we have 
\begin{align*}
\int_{G}f \de m_G = \int_{\varwedge} \int_{H_w} f(gh) \de m_{H_w}(h) \de m_\varwedge(gH_w).
\end{align*}
Since $H_w(\Z) = \Gamma_{\mathbf{a}}\cap H_w$ we may set
\begin{align*}
C := \vol\Big(\rquot{H_w}{H_w(\Z)}\Big) = \vol\Big(\rquot{H_w}{\Gamma_{\mathbf{a}} \cap H_w}\Big).
\end{align*}

To simplify notation we substitute $r = \sqrt{D}$ and write $B_r = B_r(0)$.
The balls $B_r$ are well-rounded in the sense of \cite{eskinmcmullen}.
We have the following mixing statement on average for $f \in C_c(G/\Gamma_{\mathbf{a}})$
\begin{align*}
\frac{1}{C}\frac{1}{m_\varwedge(B_r)} \int_{B_r}\int_{H_w\Gamma_{\mathbf{a}}}
f(gh\Gamma_{\mathbf{a}}) 
&\de m_{\rquot{H_w}{\Gamma_{\mathbf{a}}\cap H_w}}(h(\Gamma_{\mathbf{a}}\cap H_w)) \de m_\varwedge(gH_w) \\
&\to \frac{1}{\vol(\rquot{G}{\Gamma_{\mathbf{a}}})}\int_{G/\Gamma_{\mathbf{a}}}f.
\end{align*}
Then \cite[Thm.~1.4]{eskinmcmullen} implies
\begin{align*}
|\purewedges_{\mathbf{a}}(\Z) \cap B_r|
=|\Gamma_{\mathbf{a}}.w \cap B_r| 
&\asymp \frac{C}{\vol(\rquot{G}{\Gamma_{\mathbf{a}}})} m_\varwedge(B_r) \\
&= \frac{C}{[\SL_4(\Z):\Gamma_{\mathbf{a}}] \vol(\rquot{G}{\SL_4(\Z)})}m_\varwedge(B_r).
\end{align*}
By the analogous argument using the whole lattice $\SL_4(\Z)$ instead of the congruence subgroup $\Gamma_{\mathbf{a}}$ we have
\begin{align*}
|\varprim(\Z) \cap B_r|
=|\SL_4(\Z).w \cap B_r| 
&\asymp \frac{C}{\vol(\rquot{G}{\SL_4(\Z)})} m_\varwedge(B_r).
\end{align*}
Since $|\varprim(\Z) \cap B_r| = 2|\mathcal{R}_{<D}|$ 
and $[\SL_4(\Z):\Gamma_{\mathbf{a}}] = |\varprim(\rquot{\Z}{N\Z})|$ this proves that
\begin{align*}
|\purewedges_{\mathbf{a}}(\Z) \cap B_r| 
&\asymp \frac{1}{|\varprim(\rquot{\Z}{N\Z})|} \frac{C}{\vol(\rquot{G}{\SL_4(\Z)})} m_\varwedge(B_r)\\
&\asymp \frac{2}{|\varprim(\rquot{\Z}{N\Z})|} |\mathcal{R}_{<D}|
\end{align*}
as desired.
\end{proof}

\subsection{Counting representations by the discriminant}

To prove Proposition \ref{prop:elm counting variety mod N} we will use the following auxiliary lemma which can be found in greater generality in \cite[Lemma 1.3.1 and Thm.~1.3.2]{kitaoka}.

\begin{lemma}[Counting solutions to quadratic equations]\label{lemma:sol quad equation}
Let $p$ be an odd prime and let $\alpha \in \Fp$.
The number $r_p(\alpha)$ of solutions to $x_1^2+x_2^2+x_3^2 = \alpha$ over~$\Fp$ satisfies $|r_p(\alpha)-p^2| \ll p$ where the implicit constant is independent of $\alpha$ and $p$.
Also, $r_p(u^2\alpha) = r_p(\alpha)$ for all $u \in \Fp^\times$.
\end{lemma}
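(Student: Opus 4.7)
The plan is to evaluate $r_p(\alpha)$ explicitly via Fourier analysis on $\Fp$ and deduce both claims from the resulting closed formula.

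First, I would fix a non-trivial additive character $\psi:\Fp\to\C^\times$ and write
\begin{align*}
r_p(\alpha) = \tfrac{1}{p}\sum_{t \in \Fp}\,\sum_{x_1,x_2,x_3 \in \Fp} \psi\big(t(x_1^2+x_2^2+x_3^2-\alpha)\big)
= \tfrac{1}{p}\sum_{t \in \Fp} \psi(-t\alpha) S(t)^3,
\end{align*}
where $S(t) = \sum_{x \in \Fp}\psi(tx^2)$. The $t = 0$ term contributes $p^2$. For $t \neq 0$, a standard computation grouping $x$ by $y = x^2$ and using that the number of square roots of $y \neq 0$ is $1+\chi(y)$ (where $\chi$ denotes the quadratic character of $\Fp^\times$, extended by $\chi(0)=0$) gives $S(t) = \chi(t) g$, where $g = \sum_{y \in \Fp^\times}\chi(y)\psi(y)$ is the quadratic Gauss sum.

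Next, I would use that $\chi^3 = \chi$ and that $g^2 = \chi(-1)p$ (hence $g^4 = p^2$). Substituting $S(t) = \chi(t)g$ and changing variables $u = -t\alpha$ in the resulting sum for $\alpha \neq 0$ yields
\begin{align*}
r_p(\alpha) = p^2 + \tfrac{g^3}{p}\sum_{t \in \Fp^\times}\chi(t)\psi(-t\alpha) = p^2 + \chi(-\alpha)\,\tfrac{g^4}{p} = p^2 + \chi(-\alpha)\,p,
\end{align*}
while for $\alpha = 0$ the inner sum vanishes and $r_p(0) = p^2$.

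The bound $|r_p(\alpha) - p^2| \leq p$ is then immediate with an implicit constant independent of $\alpha$ and $p$. The second claim is equally immediate: $\chi(-u^2\alpha) = \chi(u)^2 \chi(-\alpha) = \chi(-\alpha)$ for $u \in \Fp^\times$, whence $r_p(u^2\alpha) = r_p(\alpha)$. There is no real obstacle here; the only substantive ingredient is the evaluation $g^2 = \chi(-1)p$ of the quadratic Gauss sum, which is entirely standard.
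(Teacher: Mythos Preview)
Your argument is correct and arrives at the same closed formula $r_p(\alpha)=p^2+\chi(-\alpha)p$ (with $r_p(0)=p^2$) that the paper obtains, but by a genuinely different route. The paper proceeds elementarily: since $x_1^2+x_2^2+x_3^2$ is isotropic over $\Fp$ for odd $p$, it is $\Fp$-equivalent (by discriminant comparison) to $x_1x_2-x_3^2$, and one then counts solutions to $x_1x_2=\alpha+x_3^2$ by a direct case analysis on whether $0$ lies in the value set $\{\alpha+x_3^2:x_3\in\Fp\}$. Your character-sum computation is more systematic and generalizes immediately to any number of variables, at the cost of invoking the evaluation $g^2=\chi(-1)p$ of the quadratic Gauss sum; the paper's argument is completely self-contained but tailored to this specific form.
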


\begin{proof}
Since $x_1^2+x_2^2+x_3^2$ is isotropic, it is equivalent to $x_1x_2-x_3^2$ (by discriminant comparison) and so $r_p(\alpha)$ is equal to the number of solutions to $x_1x_2 = \alpha+ x_3^2$.
We let $S = \{\alpha+x_3^2: x_3 \in \Fp\}$ and note that any $s \in S$ is represented by exactly two values $x_3$ except for $\alpha$.
Also, $|S| = \tfrac{p+1}{2}$. 
For any non-zero $s\in S$ the number of solutions to $x_1x_2 = s$ is equal to the number of solutions to $x_1x_2 = 1$ which is $|\Fp^\times|= p-1$.
Furthermore, the number of solutions to $x_1x_2 =0$ is $2p-1$.
We now distinguish two cases.

Case 1: $0 \not\in S$ (i.e.~$-\alpha$ is not a square).
Then $\alpha \neq 0$ and
\begin{align*}
r_p(\alpha) = (p-1) 2 (|S|-1) + (p-1)
= p^2-p.
\end{align*}

Case 2: $0 \in S$. If $\alpha = 0$ then
\begin{align*}
r_p(0) = (p-1) 2 (|S|-1) + (2p-1) = p^2.
\end{align*}
Otherwise,
\begin{align*}
r_p(\alpha) = (p-1) 2 (|S|-2) + (p-1) + (2p-1)2
= p^2+p
\end{align*}
which concludes the proof.
\end{proof}

\begin{proof}[Proof of Proposition \ref{prop:elm counting variety mod N}]
Let us first assume that $N = p$ is an odd prime and let us begin by counting non-zero (i.e.~primitive) points in $\purewedges(\Fp)$ of discriminant $\alpha\in \Fp$.
Choosing the standard basis of $\bigwedge^2\Fp^4$ the quadratic form $\disc$ is represented by $x_{12}^2+ \ldots +x_{34}^2$.
Furthermore, $\purewedges(\Fp)$ is the set of solutions to $x_{12}x_{34}-x_{13}x_{24}+x_{14}x_{23} = 0$.
By adding and subtracting one sees that the system of equations
\begin{align*}
\begin{cases}
x_{12}^2+ \ldots +x_{34}^2 = \alpha \\
x_{12}x_{34}-x_{13}x_{24}+x_{14}x_{23} = 0
\end{cases}
\end{align*}
is equivalent to the decoupled system
\begin{align*}
\begin{cases}
y_1^2+y_2^2+y_3^2 = \alpha \\
y_4^2+y_5^2+y_6^2 = \alpha
\end{cases}
\end{align*}
where $y_1 = x_{12}-x_{34}$, $y_2 = x_{13}+x_{24}$, $y_3 = x_{14}-x_{23}$, $y_4 = x_{12}+x_{34}$, $y_5 = x_{13}-x_{24}$ and $y_6 = x_{14}+x_{23}$.
If $\alpha\neq 0$, the number of (non-zero) solutions to the latter system is equal to $r_p(\alpha)^2$ and so 
\begin{align}\label{eq:no of rep at every prime}
|\{\mathbf{a} \in \varprim(\Fp): \disc(\mathbf{a}) = \alpha\}| 
= r_p(\alpha)^2.
\end{align}
If $\alpha = 0$, the number of non-zero solutions is $r_p(0)^2-1$.

Fix $\alpha_1,\alpha_2 \in \Fp^\times$ with $-\alpha_1 \in (\Fp^\times)^2$ and $-\alpha_2 \not\in (\Fp^\times)^2$.
We now apply \eqref{eq:no of rep at every prime} to estimate
\begin{align*}
\frac{1}{|\varprim(\Fp)|}&
\big|\{\mathbf{a} \in \varprim(\Fp): -\disc(\mathbf{a}) \not\in (\Fp^\times)^2 \}\big|\\
&=\frac{(r_p(0)^2-1) + |(\Fp^\times)^2|r_p(\alpha_2)^2}{(r_p(0)^2-1) + r_p(\alpha_1)^2|(\Fp^\times)^2| + r_p(\alpha_2)^2 |(\Fp^\times)^2|} \\
&\leq \frac{r_p(0)^2}{r_p(\alpha_1)^2|(\Fp^\times)^2|}
+ \frac{r_p(\alpha_2)^2}{r_p(\alpha_1)^2+r_p(\alpha_2)^2}= \frac{\big(\frac{r_p(0)}{r_p(\alpha_1)}\big)^2}{|(\Fp^\times)^2|} +  \frac{1}{1+\big(\frac{r_p(\alpha_1)}{r_p(\alpha_2)}\big)^2}
\end{align*}
By Lemma \ref{lemma:sol quad equation}, $\frac{r_p(x_1)}{r_p(x_2)}$ converges to $1$ as $p$ goes to infinity uniformly in $x_1,x_2 \in \Fp$ and so we have
\begin{align}\label{eq:prop non-sq estimate 2}
\frac{|\{\mathbf{a} \in \varprim(\Fp): -\disc(\mathbf{a}) \in (\Fp^\times)^2 \}|}{|\varprim(\Fp)|} 
\leq \frac{1}{15} + \frac{1}{1+\frac{2}{3}} = \frac{2}{3}
\end{align}
for all but finitely many odd primes $p$.

Now let $N$ be an arbitrary odd and square-free number and let $N = p_1 \ldots p_m$ be its prime decomposition.
Let $M_k$ be the number of vectors $\mathbf{a} \in \varprim(\rquot{\Z}{N\Z})$ for which $-\disc(\mathbf{a})$ is a non-zero square only modulo $p_k$.
Then by the application of the Chinese remainder theorem in \eqref{eq:chinese remainder} and the estimate \eqref{eq:prop non-sq estimate 2}
\begin{align*}
\frac{M_k}{|\varprim(\rquot{\Z}{N\Z})|} \leq \Big(\frac{2}{3}\Big)^{m-1-m_0} \ll \Big(\frac{2}{3}\Big)^{m}
\end{align*}
where $m_0$ is the number of exceptions to \eqref{eq:prop non-sq estimate 2}.
Similarly, if $M_0$ is the number of vectors $\mathbf{a} \in \varprim(\rquot{\Z}{N\Z})$ for which $-\disc(\mathbf{a})$ is not a non-zero square modulo any $p_k$, we have $\frac{M_0}{|\varprim(\rquot{\Z}{N\Z})|} \ll (\frac{2}{3})^m$. This proves the proposition.
\end{proof}

\section{Class groups and Theorem \ref{thm:arithmeticversion}}\label{section:appendixA}

In this section we would like to explain the relationship between Theorem \ref{thm:main} and Theorem~\ref{thm:arithmeticversion} by illustrating it in a special case.
%
This will also give more intuition on the $45^\circ$-twist discussed in Lemma~\ref{lemma:pointwisestab}.

\subsection{Planes in the split quaternion algebra}
We consider the quaternion algebra $\quat = \Mat_{2}$ and the group $\SL_2$ of norm one units (see also Section \ref{section:local for forms}).
Here, recall that the conjugation on $\Mat_{2}$ is given by the adjunct
\begin{align*}
\begin{pmatrix}
a & b \\ 
c & d
\end{pmatrix}^{\ad}
= \begin{pmatrix}
d & -b \\ 
-c & a
\end{pmatrix}.
\end{align*}
The norm is the determinant
\begin{align*}
Q(a,b,c,d) = 
\begin{pmatrix}
a & b \\ 
c & d
\end{pmatrix}\begin{pmatrix}
a & b \\ 
c & d
\end{pmatrix}^{\ad} = (ad-bc) \id
\end{align*}
and the trace is the usual trace
\begin{align*}
\begin{pmatrix}
a & b \\ 
c & d
\end{pmatrix}+\begin{pmatrix}
a & b \\ 
c & d
\end{pmatrix}^{\ad} = \begin{pmatrix}
a+d & 0 \\ 
0 & d+a
\end{pmatrix}
= (a+d) \id.
\end{align*}
As before, we let $\SL_2^2$ act on $\Mat_{2,2}$ and $\SL_2$ act on the traceless matrices $\quat_0~=~\mathfrak{sl}_2$.
The formula \eqref{eq:associated intpts} as well as Proposition \ref{prop:splitting map} on the Klein map can directly be generalized to this setup and so one can identify two-dimensional subspaces $L \subset \Mat_{2}$ with equivalence class of pairs $[(a_1(L),a_2(L))]$ where $a_1(L),a_2(L) \in \quat_0$ satisfy $\det(a_1(L)) = \det(a_2(L))$.

Note that $\Mat_{2,2}$ carries an integral structure given by $\Mat_{2,2}(\Z)$.
The analogue of formula \eqref{eq:associated intpts} does not directly yield integral matrices (the trace is not automatically divisible by $2$) which is why we multiply the defining expression by~$2$.

\subsection{The acting tori in a special case}
For the purposes of this subsection we would like to consider the plane
\begin{align*}
L = \langle E_{11}, E_{22}  \rangle
\end{align*}
where $E_{ij}$ denotes the matrix which is one at the $(i,j)$-th entry and zero otherwise. An integral basis of $L(\Z) = L \cap \Mat_{2,2}(\Z)$ is then given by $E_{11},E_{22}$.
So the integer points associated $L$ (see also Section \ref{section:associated intpts}) are given by
\begin{align*}
a_1(L) 
&= 2E_{11}E_{22}^{\ad} - \Tr(E_{11}E_{22}^{\ad})
= 2E_{11} - \Tr(E_{11}) = 2E_{11}-\id\\ 
&= E_{11}-E_{22} = \begin{pmatrix}
1 & 0 \\ 
0 & -1
\end{pmatrix} = a_2(L).
\end{align*}
The analogue of Proposition \ref{prop:splitting map} thus yields
\begin{align*}
\Stab_{\SL_2^2}(L) = \Stab_{\SL_2}(a_1(L)) \times \Stab_{\SL_2}(a_2(L))
= A \times A
\end{align*}
where $A = \set{a_s = \diag(s,s^{-1})}$
denotes the subgroup of diagonal matrices.
A direct computation provides the pointwise stabilizers
\begin{align*}
\Stab^{\mathrm{pt}}_{\SL_2^2}(L) &= \setc{(a,a)}{a \in A} \\
\Stab^{\mathrm{pt}}_{\SL_2^2}(L^\perp) &= \setc{(a,a^{-1})}{a \in A}
\end{align*}
in analogy to Lemma \ref{lemma:pointwisestab}.
We now fix ourselves an element $(a_s,a_t) \in A^2$ and examine the way it acts on all relevant subspaces (see Section \ref{section:acting group}).
\begin{itemize}
\item The action of $(a_s,a_t)$ on the subspace $L$ is represented by $a_{st^{-1}} \in A$ in the integral basis $E_{11}$, $E_{22}$ as
\begin{align*}
a_s E_{11} a_t^{-1} = st^{-1}E_{11}, \quad 
a_s E_{22} a_t^{-1} = s^{-1}t E_{22}.
\end{align*}
Note that the restriction of $Q$ to $L$ represented in the basis $E_{11}$, $E_{22}$ is exactly the binary form $q_L(x,y) = xy$ so that $A = \Stab_{\SL_2}(q_L)$.
In other words, the homomorphism $\Psi_{1,L}$ defined in analogy to Section \ref{section:acting group} is given by
\begin{align*}
\Psi_{1,L}: (a_s,a_t) \in A \times A \mapsto a_{st^{-1}} \in A.
\end{align*}
\item  We proceed similarly for $L^\perp$ for which we consider the integral basis given by $E_{12},E_{21}$. Then
\begin{align*}
a_s E_{12} a_t^{-1} = s E_{12} a_{t}^{-1} =  stE_{11}, \quad 
a_s E_{21} a_t^{-1} = (st)^{-1} E_{22}.
\end{align*}
Therefore,
\begin{align*}
\Psi_{2,L}: (a_s,a_t) \in A \times A \mapsto a_{st} \in A.
\end{align*}
\item The orthogonal complement $a_1(L)$ inside $\quat_0$ is given by $L^\perp$, for which we again choose the integral basis $E_{12},E_{21}$. Then
\begin{align*}
\Psi_{3,L}: (a_s,a_t) \in A \times A \mapsto a_{s^2} \in A.
\end{align*}
This is because the action of $(a_s,a_t)$ on $a_1(L)^\perp \subset \quat_0$ is the conjugation with $a_s$ (by the analogon of Proposition \ref{prop:integrality for splitting map}) so that one can apply the previous calculation with $s =t$.
\item Since $a_1(L) = a_2(L)$ one analogously obtains
\begin{align*}
\Psi_{4,L}: (a_s,a_t) \in A \times A \mapsto a_{t^2} \in A.
\end{align*}
\end{itemize}

\end{appendix}

\bibliographystyle{amsalpha}
\bibliography{Bibliography}

\end{document}
